\documentclass[11pt,reqno]{amsart}

\usepackage{amsmath,amssymb, amsthm}
\usepackage{mathrsfs}
\usepackage{cite}
\usepackage{enumerate}
\usepackage{enumitem}
\usepackage[mathlines]{lineno}
\usepackage[colorlinks=true, allcolors=blue]{hyperref}
\usepackage{pdfpages} 
\usepackage{tikz}
\usepackage{amsmath}
\usetikzlibrary{calc,decorations.pathreplacing,fit,shapes.misc}
\usetikzlibrary{calc,decorations.pathmorphing}
\pgfdeclarelayer{background}
\pgfdeclarelayer{foreground}
\pgfdeclarelayer{front}
\pgfsetlayers{background,main,foreground,front}

\definecolor{morandi1}{RGB}{255, 153, 153}
\definecolor{morandi2}{RGB}{102, 204, 255}  
\definecolor{morandi3}{RGB}{255, 204, 102}  
\definecolor{morandi4}{RGB}{102, 222, 153} 
\definecolor{morandi5}{RGB}{204, 153, 255} 

\usepackage{subcaption}
\pgfkeys{
  /qedge/.is family, /qedge,
  offset/.initial=0pt,
  line width/.initial=1pt,
  draw/.initial=,
  fill/.initial=,
  fill opacity/.initial=1,
}

\newcommand{\qedgeSeven}[8][]{
  \pgfkeys{/qedge, offset=0pt, line width=1pt, draw=, fill=, fill opacity=1, #1}
  \edef\qoffs{\pgfkeysvalueof{/qedge/offset}}
  \edef\qlw{\pgfkeysvalueof{/qedge/line width}}
  \edef\qdrawopt{\pgfkeysvalueof{/qedge/draw}}
  \edef\qfillopt{\pgfkeysvalueof{/qedge/fill}}
  \edef\qfillopacity{\pgfkeysvalueof{/qedge/fill opacity}}

  \ifdim\qoffs=0pt\relax
    \ifx\qfillopt\empty
      \fill[fill opacity=\qfillopacity] (#2)--(#3)--(#4)--(#5)--(#6)--(#7)--(#8)--cycle;
    \else
      \fill[\qfillopt, fill opacity=\qfillopacity] (#2)--(#3)--(#4)--(#5)--(#6)--(#7)--(#8)--cycle;
    \fi

    \ifx\qdrawopt\empty
      \draw[line width=\qlw] (#2)--(#3)--(#4)--(#5)--(#6)--(#7)--(#8)--cycle;
    \else
      \draw[line width=\qlw,\qdrawopt] (#2)--(#3)--(#4)--(#5)--(#6)--(#7)--(#8)--cycle;
    \fi
  \else

    \coordinate (1p) at ($#2!\qoffs!-90:#8$);
    \coordinate (1n) at ($#2!\qoffs! 90:#3$);

    \coordinate (2p) at ($#3!\qoffs!-90:#2$);
    \coordinate (2n) at ($#3!\qoffs! 90:#4$);

    \coordinate (3p) at ($#4!\qoffs!-90:#3$);
    \coordinate (3n) at ($#4!\qoffs! 90:#5$);

    \coordinate (4p) at ($#5!\qoffs!-90:#4$);
    \coordinate (4n) at ($#5!\qoffs! 90:#6$);

    \coordinate (5p) at ($#6!\qoffs!-90:#5$);
    \coordinate (5n) at ($#6!\qoffs! 90:#7$);

    \coordinate (6p) at ($#7!\qoffs!-90:#6$);
    \coordinate (6n) at ($#7!\qoffs! 90:#8$);

    \coordinate (7p) at ($#8!\qoffs!-90:#7$);
    \coordinate (7n) at ($#8!\qoffs! 90:#2$);

    \def\nqhedge{%
      (1p) let \p1=($(1p)-#2$), \p2=($(1n)-#2$) in
        arc[start angle={atan2(\y1,\x1)},
            delta angle={atan2(\y2,\x2)-atan2(\y1,\x1)-360*(atan2(\y2,\x2)-atan2(\y1,\x1)>0)},
            x radius=\qoffs, y radius=\qoffs] --
      (2p) let \p1=($(2p)-#3$), \p2=($(2n)-#3$) in
        arc[start angle={atan2(\y1,\x1)},
            delta angle={atan2(\y2,\x2)-atan2(\y1,\x1)-360*(atan2(\y2,\x2)-atan2(\y1,\x1)>0)},
            x radius=\qoffs, y radius=\qoffs] --
      (3p) let \p1=($(3p)-#4$), \p2=($(3n)-#4$) in
        arc[start angle={atan2(\y1,\x1)},
            delta angle={atan2(\y2,\x2)-atan2(\y1,\x1)-360*(atan2(\y2,\x2)-atan2(\y1,\x1)>0)},
            x radius=\qoffs, y radius=\qoffs] --
      (4p) let \p1=($(4p)-#5$), \p2=($(4n)-#5$) in
        arc[start angle={atan2(\y1,\x1)},
            delta angle={atan2(\y2,\x2)-atan2(\y1,\x1)-360*(atan2(\y2,\x2)-atan2(\y1,\x1)>0)},
            x radius=\qoffs, y radius=\qoffs] --
      (5p) let \p1=($(5p)-#6$), \p2=($(5n)-#6$) in
        arc[start angle={atan2(\y1,\x1)},
            delta angle={atan2(\y2,\x2)-atan2(\y1,\x1)-360*(atan2(\y2,\x2)-atan2(\y1,\x1)>0)},
            x radius=\qoffs, y radius=\qoffs] --
      (6p) let \p1=($(6p)-#7$), \p2=($(6n)-#7$) in
        arc[start angle={atan2(\y1,\x1)},
            delta angle={atan2(\y2,\x2)-atan2(\y1,\x1)-360*(atan2(\y2,\x2)-atan2(\y1,\x1)>0)},
            x radius=\qoffs, y radius=\qoffs] --
      (7p) let \p1=($(7p)-#8$), \p2=($(7n)-#8$) in
        arc[start angle={atan2(\y1,\x1)},
            delta angle={atan2(\y2,\x2)-atan2(\y1,\x1)-360*(atan2(\y2,\x2)-atan2(\y1,\x1)>0)},
            x radius=\qoffs, y radius=\qoffs] --
      cycle
    }

    \ifx\qfillopt\empty
      \fill[fill opacity=\qfillopacity] \nqhedge;
    \else
      \fill[\qfillopt, fill opacity=\qfillopacity] \nqhedge;
    \fi

    \ifx\qdrawopt\empty
      \draw[line width=\qlw, rounded corners=\qoffs] \nqhedge;
    \else
      \draw[line width=\qlw, \qdrawopt] \nqhedge;
    \fi
  \fi
}

\usepackage[margin=2.5cm]{geometry}

\newtheorem{theorem}{Theorem}[section]
\newtheorem{claim}[theorem]{Claim}
\newtheorem{lemma}[theorem]{Lemma}
\newtheorem{definition}[theorem]{Definition}
\newtheorem{problem}[theorem]{Problem}
\newtheorem{conjecture}[theorem]{Conjecture}
\newtheorem{fact}[theorem]{Fact}
\newtheorem{proposition}[theorem]{Proposition}

\newcommand{\cH}{{\mathcal H}}
\newcommand{\cG}{{\mathcal G}}

\newcommand{\cQ}{{\mathcal Q}}
\newcommand{\cP}{{\mathcal P}}
\newcommand{\cB}{{\mathcal B}}
\newcommand{\cR}{{\mathcal R}}

\newcommand{\cN}{{\mathcal N}}

\newcommand{\cL}{{\mathcal L}}
\newcommand{\eps}{{\varepsilon}}
\numberwithin{equation}{section}

\title{Exact minimum co-degree conditions for $\ell$-Hamiltonicity in hypergraphs}
\author{Luyining Gan}
\thanks{LG was partially supported by the National Natural Science Foundation of China (12401446).}
\address{School of Mathematical Sciences\\ Beijing University of Posts and Telecommunications\\ Beijing\\ China}
\email{elainegan@bupt.edu.cn}

\author{Jie Han}
\thanks{JH was partially supported by the National Natural Science Foundation of China (12371341).}
\address{School of Mathematics and Statistics\\ Beijing Institute of Technology\\ Beijing\\ China}
\email{han.jie@bit.edu.cn}

\author{Huan Xu}
\address{School of Mathematical Sciences\\ Beijing University of Posts and Telecommunications\\ Beijing\\ China}
\email{h.xu@bupt.edu.cn}
\date{}

\begin{document}

\begin{abstract}
Suppose $1\le \ell <k$ such that $(k-\ell)\nmid k$.
Given an $n$-vertex $k$-uniform hypergraph $\cH$, for all $k/2<\ell< 3k/4$ and sufficiently large $n\in (k-\ell)\mathbb N$, we prove that if $\cH$ has minimum co-degree at least $\frac{n}{\lceil \frac{k}{k-\ell}\rceil (k-\ell)}$, then $\cH$ contains a Hamilton $\ell$-cycle, which partially verifies a conjecture of Han and Zhao and (partially) resolves a problem of R\"odl and Ruci\'nski.
Moreover, we show that assuming minimum co-degree $\frac{n}{\lceil \frac{k}{k-\ell}\rceil (k-\ell)}+\frac{k^2}2$ is enough for all $\ell$. 
\end{abstract}

\maketitle

\section{Introduction}\label{int}
The Hamilton cycle problem is one of the central topics in graph theory. 
A classical result of Dirac~\cite{DIR1952} states that if an $n$-vertex graph $G$ with $n\ge 3$ has minimum degree $\delta(G)\ge n/2$, then it contains a Hamilton cycle, and the quantity $n/2$ is the best possible.
Over the past two to three decades, there has been a strong focus on extending Dirac's theorem to (uniform) hypergraphs.

Denote by $[r]$ the set of integers from $1$ to $r$.
For an integer $k\ge 0$, a set of $k$ elements is referred to as a \emph{$k$-set}.
For a set $A$, we use $\binom{A}{k}$ to denote the collection of all $k$-subsets of $A$.
Given $k\ge 2$, a \emph{$k$-uniform hypergraph} $\cH$ (for short, \emph{$k$-graph}) is a pair $(V,E)$, 
where $V:=V(\cH)$ is a vertex set and $E:=E(\cH)\subseteq \binom{V}{k}$ is a family of $k$-subsets of $V$. 
We write $e(\cH):=|E(\cH)|$ for the number of edges in $\cH$.
Given a $k$-graph $\cH$ and a vertex set $S\in \binom{V}{\ell}$ with $1\le \ell<k$, 
let $\deg_{\cH}(S)$ (the subscript $\cH$ is omitted if no confusion can arise) be the number of $(k-\ell)$-subsets $Y\in \binom{V}{k-\ell}$ 
such that $S\cup Y$ is an edge in $\cH$.
The \emph{minimum $\ell$-degree} $\delta_{\ell}(\cH)$ of $\cH$ is the minimum of $\deg_{\cH}(S)$ over all $\ell$-sets $S$ in $\cH$, that is, 
$\delta_{\ell}(\cH)=\min \{\deg_{\cH}(S):S\in \binom{V}{\ell}\}$. 
In particular, 
$\delta_{k-1}(\cH)$ is usually referred to as the minimum co-degree of $\cH$.
For a $k$-graph $\cH$ and a set $A \subseteq V$, $e_{\cH}(A)$ denotes the number of edges of $\cH$ contained in $A$.

There are several notions of hypergraph cycles, e.g. uniform cycles and Berge cycles.
In this paper, we consider uniform cycles defined as follows.
For $1\le \ell <k$, a $k$-graph is called a \emph{$k$-uniform $\ell$-cycle} if there exists a cyclic ordering of its vertices such that every edge consists of $k$ consecutive vertices and two consecutive edges share exactly $\ell$ vertices.
Note that a \emph{$k$-uniform $\ell$-path} is defined similarly except that the ordering is linear instead of cyclic.
For an $\ell$-path $\cP=v_1v_2\dots v_t$, the $\ell$-tuples $v_1\dots v_\ell$ and $v_tv_{t-1}\dots v_{t-\ell+1}$ are called the \emph{$\ell$-ends} (or \emph{ends}, for short) of $\cP$.
A $k$-graph on $n$ vertices contains a \emph{Hamilton $\ell$-cycle}
if it contains an $\ell$-cycle as a spanning subhypergraph.
Note that $(k-\ell)\mid n$ is a necessary condition for containing a Hamilton $\ell$-cycle.

In 1999, Katona and Kierstead~\cite{KK1999} proved that if an $n$-vertex $k$-graph $\cH$ satisfies $\delta_{k-1}(\cH)\ge (1-\frac{1}{2k})n+4-k-\frac{5}{2k}$ then $\cH$ contains a Hamilton $(k-1)$-cycle (also called \emph{tight Hamilton cycle}), 
and further conjectured that the best possible bound should be 
$\delta_{k-1}(\cH)\ge \frac{n-k+3}{2}$.

Major progress towards this conjecture was made by R\"odl, Ruci\'nski and Szemer\'edi, who~\cite{RRS2006, RRS2008} proved an asymptotic version of this conjecture in~\cite{KK1999} which asserted that for $k\ge 3, \gamma> 0$ and  sufficiently large $n$, every $k$-graph $\cH$ on $n$ vertices with $\delta_{k-1}(\cH)\ge (\frac{1}{2}+\gamma)n$ contains a tight Hamilton cycle.
For $k=3$, they~\cite{RRS11} further improved the minimum co-degree condition to a tight one, e.g.~$\delta_2(\cH)\ge n/2$.
Later Markstr\"om and Ruci\'nski~\cite{MR2011} showed that this indeed provides asymptotically tight minimum co-degree conditions for all $\ell\in [k-1]$ such that $(k-\ell)\mid k$.
%
For other $\ell$, that is, when $(k-\ell)\nmid k$, this threshold can be lowered significantly.
H\`an and Schacht~\cite{HS2010} proved that for $k\ge 3,\gamma>0$ and $1\le \ell<k/2$, every $n$-vertex $k$-graph $\cH$ with $\delta_{k-1}(\cH)\ge (\frac{1}{2(k-\ell)}+\gamma)n$ contains a Hamilton $\ell$-cycle.
This full threshold was established by K\"uhn,  Mycroft and Osthus~\cite{KMO10}, that is,
assuming minimum co-degree at least $\frac{n}{\lceil\tfrac{k}{k-\ell}\rceil(k-\ell)}+\gamma n$ contains a Hamilton $\ell$-cycle.
In the nice survey~\cite{VA10} of R\"odl and Ruci\'nski, they proposed the following problem, asking about the \emph{exact} minimum co-degree condition.

\begin{problem}[Problem 2.9, \cite{VA10}]
\label{prob}
Fix integer $k\geq 3$ and $1\leq \ell <k$. Assume that $n\in (k-\ell)\mathbb{N}$ is sufficiently large. Determine the exact value of minimum co-degree for Hamilton $\ell$-cycle in $k$-graph on $n$ vertices.
\end{problem}

So far, this has been resolved for $k=3$ and $\ell=2$ by R\"odl, Ruci\'nski and  Szemer\'edi~\cite{RRS11}, for $k=3$ and $\ell=1$ by Czygrinow and Molla~\cite{CM14}, and for $k\ge 3$ and $1\le \ell < k/2$ by Han and Zhao~\cite{HZ15}.
For the case $\ell=k/2$, Problem~\ref{prob} has been resolved for $k=4$ by Garbe and Mycroft~\cite{GM2018}, and for even $k\ge 6$ by H\`an, Han and Zhao~\cite{HHZ2022}.
For other results on Hamilton cycles, see~\cite{KO2014, DD2006, PDRD2011, BGHS1978, BMSSS2018, CHWY2025, HSW2025, LS2022, BHS2013, GPW12, HZ2015, BJ2017, HZ2016, RRRSS2019} and references therein.


Han and Zhao~\cite{HZ15} further conjectured the exact minimum co-degree condition for all $k\geq 3$ and $1\leq \ell < k$ with $(k-\ell)\nmid k$.
For brevity, define
\[s:=\left\lceil \frac{k}{k-\ell}\right\rceil.\]

\begin{conjecture}\emph{\cite{HZ15}}\label{conj:co-degree}
Fix integers $k\geq 3$ and $k/2< \ell < k$ such that $(k-\ell)\nmid k$. 
Assume that $n\in (k-\ell)\mathbb{N}$ is sufficiently large. If $\cH=(V,E)$ is an $n$-vertex $k$-graph such that $\delta_{k-1}(\cH)\geq \frac{n}{s(k-\ell)}$,
then $\cH$ contains a Hamilton $\ell$-cycle.
\end{conjecture}

\subsection{Results}

In this paper, we make the following progress towards Conjecture~\ref{conj:co-degree}.

\begin{theorem}\label{thm:exadeg}\emph{(Exact result for $\ell<3k/4$).}
    Let integers $k\geq 3$ and $k/2< \ell<3k/4$ such that $(k-\ell)\nmid k$. 
    Assume that $n\in (k-\ell)\mathbb{N}$ is sufficiently large. If $\cH=(V,E)$ is an $n$-vertex $k$-graph such that 
\begin{eqnarray}\label{eqn:conj value}
\begin{aligned}    
    \delta_{k-1}(\cH)\geq \frac{n}{s(k-\ell)},
\end{aligned}
\end{eqnarray}
then $\cH$ contains a Hamilton $\ell$-cycle.
\end{theorem}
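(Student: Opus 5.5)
We follow the standard stability approach used by Han and Zhao~\cite{HZ15} and H\`an, Han and Zhao~\cite{HHZ2022}, splitting into a non-extremal and an extremal case. Let $a:=k-\ell$, so $s=\lceil k/a\rceil$ and $\ell<3k/4$ means $k<4a$, hence $s\in\{3,4\}$ (and $s=3$ is excluded when $a\mid k$). The extremal configuration is the space-barrier construction: take $A\subseteq V$ with $|A|=\frac{n}{sa}-1$ and let $\cH$ consist of all $k$-sets meeting $A$. A Hamilton $\ell$-cycle has $n/a$ edges, and every vertex lies in at most $s$ of them. Hence it needs a cover of size at least $\frac{n}{sa}$, which $A$ does not provide.

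We say $\cH$ is \emph{$\gamma$-extremal} if there is $B\subseteq V$ with $|B|\ge (1-\frac1{sa}-\gamma)n$ and $e_{\cH}(B)\le\gamma n^k$.

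\textbf{Non-extremal case.} We use absorption.
\begin{enumerate}
\item Prove an absorbing lemma. Under $\delta_{k-1}(\cH)\ge(\frac1{sa}-\gamma)n$ alone, we find a short $\ell$-path $\cP_0$ that can absorb any set of $O(\gamma' n)$ vertices in $a\mathbb N$. The gadgets are the standard absorbers of H\`an--Schacht and K\"uhn--Mycroft--Osthus~\cite{KMO10}; co-degree $\Omega(n)$ gives $\Omega(n^{c})$ absorbers per $a$-set.
\item Build a reservoir and a connecting lemma, which again follow from a linear co-degree.
\item Prove a path-tiling lemma: if $\cH$ is not $\gamma$-extremal, then almost all of $V$ is covered by few vertex-disjoint $\ell$-paths. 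Via weak regularity, this reduces to an almost-perfect fractional tiling of the cluster hypergraph by the $k$-graph $\mathcal Y$ consisting of $s$ consecutive edges of an $\ell$-path (in the style of~\cite{KMO10}). When that tiling fails, we need a large set inducing few edges, which is exactly $\gamma$-extremality.
\end{enumerate}
Connecting the paths through the reservoir and then absorbing the leftover vertices finishes this case. Here the exact bound is not needed; the slack comes from non-extremality.

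\textbf{Extremal case (the main obstacle).} Now $B$ is nearly independent, and we set $A:=V\setminus B$ with $|A|\approx\frac{n}{sa}$. The plan is to build the cycle so that every edge uses exactly one vertex of $A$ in an appropriate pattern. Each $A$-vertex serves as the ``hub'' of $s$ consecutive edges, and the blocks of $\ell$-path between hubs consist of $B$-vertices. Concretely, we aim for an ordering where $A$-vertices are spaced $sa$ apart with $B$-segments of length $sa-1$ between them. This is feasible when $k\le sa$ lies in the right range, and the condition $\ell<3k/4$ (that is, $s\le 4$) should make the local arrangements checkable.

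The steps are as follows.
\begin{enumerate}
\item Clean up. Move vertices of $B$ with large degree into $A$ and atypical vertices of $A$ into $B$, so that typical $A$-vertices form edges with almost all $(k-1)$-subsets of $B$.
\item Handle the imbalance. If $|A|<\frac{n}{sa}$, the co-degree condition $\delta_{k-1}\ge\frac n{sa}$ forces every $(k-1)$-subset of $B$ to have a neighbour. This yields edges inside $B$ (or containing few $A$-vertices), and we use them to build a few special path segments consuming extra $B$-vertices, which balances the counts exactly. If $|A|>\frac{n}{sa}$, we instead absorb surplus $A$-vertices into segments using two $A$-vertices.
\item Greedily embed the cycle, handling the atypical vertices first and then the bulk by a Hall-type or greedy argument.
\end{enumerate}

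The hardest step is step 2. One must show that the exact degree bound yields enough disjoint ``$B$-heavy'' edges to fix the divisibility and counting deficit, and that these can be placed compatibly with the rigid hub pattern. The restriction $\ell<3k/4$ is where I expect the counting of how many $B$-vertices one such edge can save to work out. I would first try a matching argument on the link of $(k-1)$-subsets of $B$, splitting by whether $\cH[B]$ has a matching of the required size $\frac{n}{sa}-|A|$ or is dominated by a small vertex cover, which would be moved into $A$.
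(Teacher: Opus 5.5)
Your overall architecture matches the paper: a stability split; the non-extremal case via the K\"uhn--Mycroft--Osthus absorbing and connecting lemmas plus a regularity-based path cover; and the extremal case via the hub pattern, with $A$-vertices spaced $s(k-\ell)$ apart so that the bulk satisfies $|Y|=(sk-s\ell-1)|X|+\ell$. But the one step where the exact bound and the hypothesis $\ell<3k/4$ matter is your step~2. You leave it as a tentative plan, and the plan targets the wrong difficulty.

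\textbf{The count.} If $q$ hubs are missing (in the paper, $q=|A\cap B'|$), then each remaining hub covers at most $s$ of the $n/(k-\ell)$ edges. So about $sq$ edges contain no $A$-vertex. You therefore need $B$-only $\ell$-paths of total length about $sq$, not a matching of size $\frac{n}{sa}-|A|$.

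\textbf{Finding these edges is the easy part.} No vertex-cover dichotomy is needed. Every $(k-1)$-set $K\subseteq B$ has at least $q$ neighbours in $B'$ by the codegree bound. Each vertex of $B'$ lies in fewer than $2\eps_1\binom{|B|}{k-1}$ edges inside $B'$. So a greedy argument gives $sq$ disjoint edges $K\cup\{u\}$, even with $K$ free of bad subsets (Claim~\ref{clmu}); the small-cover alternative cannot occur. Moreover, the number of $B$-vertices that one $B$-only edge ``saves'' is the same for every $\ell$: it shifts the balance by exactly $k-\ell$. So this counting is not where $\ell<3k/4$ enters.

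\textbf{The missing idea: splicing a $B$-edge into the path.} Your proposal does not address how to splice an edge $e\subseteq B$ into a path whose other edges are $A'(B')^{k-1}$-edges. Since $\ell>k/2$, the $s-1$ edges following $e$ meet $e$ in nested sets of sizes $\ell,2\ell-k,3\ell-2k,\dots$. Each of these sets must extend to almost all $A'(B')^{k-1}$-sets. The codegree condition guarantees this only when the set has small degree inside $B$ (the $\eps_1$-good sets of Proposition~\ref{c1}). Otherwise it can fail, because the codegree of about $|A|$ of a $(k-1)$-set may be spent entirely inside $B$. Edges found inside $B$ through the codegree condition can easily have bad $\ell$-ends.

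The paper handles this with \emph{linkable} ends (Lemma~\ref{lem:sq paths}):
\begin{itemize}
\item Take $e=K\cup\{u\}$ with $K$ free of $\eps_1^2/3$-bad subsets and $u$ placed in the middle. Then the end-sets of sizes $2\ell-k$ and $3\ell-2k$ (both $<k/2$) avoid $u$ and stay good.
\item If an $\ell$-end is still bad, badness itself supplies $\Omega(\eps_1\binom{|B|}{k-\ell})$ extensions inside $B$ that keep the short sets good.
\item After at most three such extensions there are enough candidate ends, using $3(k-\ell)\ge\ell$, that one of them is linkable.
\end{itemize}
This is exactly where $\ell<3k/4$ is used: it gives $s\le 4$, the inequalities $3\ell-2k<k-\ell<2\ell-k<2k-2\ell$, and $3(k-\ell)\ge\ell$.

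\textbf{Other steps only gestured at.}
\begin{itemize}
\item The medium-degree vertices must be placed on short paths with linkable ends. The paper does this with the Tur\'an bound for tight paths in their links (Lemma~\ref{turan number}).
\item The final spanning $\ell$-path with prescribed ends does not follow from a greedy or Hall-type argument. The paper uses the Lov\'asz Local Lemma on random injections \cite{LS2007}.
\item The implication ``tiling fails $\Rightarrow$ extremal'' in the non-extremal case is a genuine stability result, imported from \cite{GHZ19} (Lemma~\ref{lem:GHZ-2}). It is not a consequence of the fractional tiling argument of \cite{KMO10}.
\end{itemize}
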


\begin{theorem}\label{thm:asydeg}\emph{(Asymptotic result for all $\ell$).}
Let integers $k\geq 3$ and $k/2< \ell < k$ such that $(k-\ell)\nmid k$. 
Assume that $n\in (k-\ell)\mathbb{N}$ is sufficiently large. If $\cH=(V,E)$ is an $n$-vertex $k$-graph such that
\begin{eqnarray}\label{aa1}
\begin{aligned}
\delta_{k-1}(\cH)\geq \frac{n}{s(k-\ell)}+\frac{k^2}2,
\end{aligned}
\end{eqnarray}
then $\cH$ contains a Hamilton $\ell$-cycle.
\end{theorem}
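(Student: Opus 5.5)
We prove Theorem~\ref{thm:asydeg} with the absorbing method, using a stability split into an extremal and a non-extremal case. Write $\delta:=\delta_{k-1}(\cH)\ge \frac{n}{s(k-\ell)}+\frac{k^2}{2}$. Fix a small $\varepsilon>0$ and call $\cH$ \emph{$\varepsilon$-extremal} if there is a set $A\subseteq V$ with $|A|\ge (1-\frac{1}{s(k-\ell)}-\varepsilon)n$ and $e_{\cH}(A)\le \varepsilon n^k$. This is the structure of the lower-bound construction: a set $B$ of size about $n/(s(k-\ell))$ such that every edge meets $B$. Each vertex of $B$ can serve only about $s(k-\ell)$ consecutive positions of the cycle, and that is where the threshold comes from.

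\emph{Non-extremal case.} Here we follow the H\`an--Schacht and K\"uhn--Mycroft--Osthus framework.
\begin{enumerate}[label=(\roman*)]
\item \textbf{Absorbing lemma.} There is a short $\ell$-path $\cP_0$ such that for every $U\subseteq V\setminus V(\cP_0)$ with $|U|\le \gamma n$ and $(k-\ell)\mid |U|$, there is an $\ell$-path on $V(\cP_0)\cup U$ with the same ends. The absorbers for a $(k-\ell)$-set are small configurations, and counting them only needs $\delta\ge (\frac{1}{s(k-\ell)}-o(1))n$ together with non-extremality. We then choose absorbers randomly and connect them.
\item \textbf{Connecting lemma.} Any two disjoint $\ell$-sets (ends) can be joined by a short $\ell$-path. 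This uses the codegree: an $(k-1)$-set has $\Omega(n)$ neighbours, so neighbourhoods expand.
\item \textbf{Reservoir.} We set aside a small random reservoir so that connections can be made later.
\item \textbf{Path cover.} Using the weak hypergraph regularity lemma on the rest, we find an almost perfect tiling of the reduced $k$-graph by the $(k-\ell)s$-vertex building blocks (the structures a Hamilton $\ell$-cycle decomposes into). Non-extremality is exactly what pushes the fractional tiling number slightly above $\frac{n}{s(k-\ell)}$, so the tiling covers all but $o(n)$ vertices. Each block then becomes a long $\ell$-path in the corresponding regular tuple.
\item \textbf{Assembly.} We connect these paths and $\cP_0$ through the reservoir, and absorb the leftover vertices.
\end{enumerate}
All of these steps are asymptotic and only need $\delta\ge (\frac{1}{s(k-\ell)}-o(1))n$ plus non-extremality, so the $\frac{k^2}{2}$ surplus is not used here.

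\emph{Extremal case.} This is where the exact additive term enters. Let $A$ be as above with $B=V\setminus A$, so $|B|\approx \frac{n}{s(k-\ell)}$.
\begin{enumerate}[label=(\roman*)]
\item \textbf{Cleaning.} Move to $B$ the few vertices of $A$ lying in many edges inside $A$, and move to $A$ the vertices of $B$ that are atypical, i.e.\ lie in few edges of type ``$k-1$ vertices in $A$, one in $B$.'' Call the remaining exceptional vertices $V_0$; there are $o(n)$ of them.
\item \textbf{Target pattern.} The Hamilton $\ell$-cycle we build consists of segments of $s(k-\ell)$ consecutive vertices containing exactly one vertex of $B$. Every edge of the cycle then contains at least one $B$-vertex; this is possible precisely because $k\le s(k-\ell)$. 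The pattern needs $|B|$ to match $\frac{n}{s(k-\ell)}$ exactly.
\item \textbf{Correcting the imbalance.} If $|B|$ is too large, we need edges with two or more $B$-vertices placed close together. If $|B|$ is too small, we need edges inside $A$, i.e.\ segments with no $B$-vertex. The codegree condition guarantees them: an $(k-1)$-set inside $A$ has at least $\delta$ neighbours, so when $|B|<\delta$ it must have neighbours in $A$. Hence we can find about $\delta-|B|$ disjoint edges inside $A$. The $+\frac{k^2}{2}$ gives enough slack to find the required number of disjoint ``correcting'' edges even after the exceptional vertices have been removed and rounding has been accounted for.
\item \textbf{Absorbing $V_0$.} We cover $V_0$ with short paths, using that each exceptional vertex still has large degree into typical sets.
\item \textbf{Completion.} The rest is completed greedily or with a K\"onig--Hall type matching argument on the near-complete bipartite-like structure between $A^{(k-1)}$ and $B$.
\end{enumerate}

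\emph{Main obstacle.} I expect the main obstacle to be the extremal case when $|B|$ is just below $\frac{n}{s(k-\ell)}$. There we must exhibit enough disjoint $A$-internal edges, placed compatibly with the $\ell$-overlaps, and the codegree only guarantees roughly $\delta-|B|$ of them in total. The first thing I would try is a greedy choice of vertex-disjoint edges in $A$ via the codegree bound, each removal costing at most $k$ from the available degree. The $\frac{k^2}{2}$ surplus should pay for the about $k/2$ corrections of size $k$ each.
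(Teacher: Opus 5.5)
Your overall architecture matches the paper's: a stability split, absorption plus a regularity-based path cover in the non-extremal case, and in the extremal case cleaning and a target of one small-side vertex per $s(k-\ell)$ cycle vertices. But step (iii) of the extremal case has a genuine gap, and that step is exactly where the paper spends the $\frac{k^2}{2}$. Below I use the paper's notation, where your letters are swapped: $B$ is the large sparse side, $A$ the small side, $B'$ the cleaned sparse side, and $q=|A\cap B'|$ the number of small-side vertices that behave like sparse ones.

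\textbf{The number of corrections is not bounded.} Here $q$ can be of order $\eps n$. Every vertex of an $\ell$-cycle lies in at most $s$ of its $n/(k-\ell)$ edges, so roughly $sq$ edges of the cycle must lie inside $B'$. Your greedy (``each removal costs at most $k$ of the available degree'') gives only about $(q+k^2/2)/k$ disjoint edges. Likewise, ``the surplus pays for about $k/2$ corrections'' presumes boundedly many corrections.

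The matching itself is in fact easy at the exact threshold. There are about $q\binom{|B|}{k-1}/k$ edges inside $B'$, and by the cleaning every vertex of $B'$ lies in fewer than $2\eps_1\binom{|B|}{k-1}$ of them. So a greedy matching of size $sq$ exists (Claim~\ref{clmu}).

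\textbf{The missing idea is how to splice these edges into the cycle.} The $\ell$-subsets of such edges may be \emph{bad} (large degree inside $B$). So may the suffixes of sizes $i\ell-(i-1)k$ that later edges of an $\ell$-path meet. A bad set need not extend by the typical $A'(B')^{k-1}$ edges, and neither a greedy nor a K\"onig--Hall completion repairs this. Your phrase ``placed compatibly with the $\ell$-overlaps'' names this issue but supplies no argument.

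The paper shows (Theorem~\ref{thm:main result}) that everything else works at the exact threshold once you have $sq$ vertex-disjoint bounded-length $\ell$-paths in $B'$ with \emph{linkable} ends, meaning all those suffixes are good. Linkable ends can be connected (Claim~\ref{c2}), the ratio is then fixed (Claim~\ref{claim:extend l-set}), and the rest is completed by the Lu--Sz\'ekely local lemma for random injections.

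The surplus $\frac{k^2}{2}$ is used only to build these paths (Lemma~\ref{lem:sq}):
\begin{enumerate}
\item Count edges $\{v\}\cup K$ with $K$ a good $(k-1)$-set of $B$.
\item Dispose of the easy case where some edge has this form for two different $v$; ordered from one such $v$ to the other, it is already a path of length one with linkable ends.
\item Otherwise some vertex $u$ has more than $\frac{k+1}{2}\binom{|B'|-1}{k-2}$ such edges. This exceeds the F\"uredi--Jiang--Kostochka--Mubayi--Verstra\"ete bound on $\mathrm{ex}(n,P^{k-1}_k)$.
\item The resulting tight path through $u$ contains an $\ell$-path of length $s$ with at least $\ell$ vertices on each side of $u$.
\item Every bad subset of its edges contains $u$, while neither end does, so both ends are linkable.
\end{enumerate}
Without this idea, or an equivalent one, step (iii) does not produce a Hamilton $\ell$-cycle.
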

Our proof follows the existing stability framework and splits the proof into a non-extremal case and an extremal case.
\begin{definition}
Let $\Delta > 0$, an $n$-vertex $k$-graph $\cH$ is called $\Delta$-extremal if there is a set $B \subseteq V(\cH)$, such that 
$|B| = \left\lfloor \left(1-\frac{1}{s(k - \ell)}\right) n \right\rfloor$
and $e(B) \leq \Delta n^k.$
\end{definition}

\begin{theorem}\emph{(Non-extremal case).}\label{thm:nonextremal}
For any integer $k \geq 3$, $1\leq \ell < k$ such that $(k-\ell)\nmid k$ and $0<\Delta<1$, there exists $\gamma > 0$ such that the following holds. 
Suppose that $\cH=(V,E)$ is an $n$-vertex $k$-graph such that $n \in (k - \ell)\mathbb{N}$ is sufficiently large. 
If $\cH$ is not $\Delta$-extremal and satisfies 
$\delta_{k-1}(\cH) \geq \left( \frac{1}{s(k - \ell)} - \gamma \right) n$,
then $\cH$ contains a Hamilton $\ell$-cycle.
\end{theorem}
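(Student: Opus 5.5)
We will prove Theorem~\ref{thm:nonextremal} by the absorbing method, following the H\`an--Schacht / Han--Zhao framework. Constants are chosen hierarchically as $1/n \ll \gamma \ll \beta \ll \Delta$. The argument has four steps, carried out in the following order: a connecting lemma, an absorbing lemma, a reservoir, and a path cover.

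\textbf{Connecting lemma.} For any two disjoint ordered $\ell$-sets $A,B$, we show there are $\Omega(n^{c})$ short $\ell$-paths with ends $A$ and $B$, for some constant $c$ and path length depending only on $k$.
\begin{itemize}
\item Since $\ell>k/2$ is allowed, the codegree $\delta_{k-1}\ge (\frac1{s(k-\ell)}-\gamma)n$ alone does not connect arbitrary $\ell$-sets.
\item We therefore build the path greedily, extending $(k-1)$-sets step by step. Each step has at least $\approx n/(s(k-\ell))$ choices.
\item Where greedy extension gets stuck (a bounded neighbourhood structure), we use non-extremality to find an edge leaving it.
\end{itemize}

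\textbf{Absorbing lemma.} The absorbers are as usual: for a $(k-\ell)$-set $S$, an absorber is an $\ell$-path $P$ with the same ends as some $\ell$-path on $V(P)\cup S$.
\begin{itemize}
\item Codegree counting gives $\Omega(n^{|V(P)|})$ absorbers for every $S$.
\item Random selection followed by the connecting lemma yields an absorbing path $P_0$ of size $\le\beta n$. It absorbs any set $U$ with $|U|\le\beta^2 n$ and $(k-\ell)\mid |U|$.
\item We then set aside a small random reservoir $R$ such that every pair of ends has many connections through $R$.
\end{itemize}

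\textbf{Path cover (main obstacle).} We must cover all but $\beta^2 n$ vertices of $V\setminus (V(P_0)\cup R)$ by a bounded number of vertex-disjoint $\ell$-paths, assuming $\cH$ is not $\Delta$-extremal.
\begin{itemize}
\item Apply the weak hypergraph regularity lemma to obtain a cluster $k$-graph $\cR$. It inherits the codegree condition $\delta_{k-1}(\cR)\ge(\frac{1}{s(k-\ell)}-2\gamma)|\cR|$ for almost all $(k-1)$-sets.
\item Find an almost perfect fractional tiling of $\cR$ by a fixed $k$-graph $\cF$: the one whose vertex set splits into $s(k-\ell)$ parts, with a heavy part of size proportional to $s(k-\ell)-1$. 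Each copy of $\cF$ yields, by regularity, an almost spanning $\ell$-path in the corresponding clusters. This $\cF$ is the structure used in K\"uhn--Mycroft--Osthus and Han--Zhao.
\item The key step is a tiling dichotomy. Either $\cR$ has an $\cF$-tiling covering all but $\beta|\cR|$ vertices, or there is a set $B'$ of roughly $(1-\frac1{s(k-\ell)})|\cR|$ clusters spanning few edges.
\item I would attempt this dichotomy via a fractional LP/greedy augmentation argument. Take a maximum tiling; if the uncovered part is large, use codegree counting to exchange copies so that the tiling grows. The extremal configuration appears exactly when every uncovered $(k-1)$-set has neighbours concentrated in a set of size $\approx n/(s(k-\ell))$.
\item In that second case, lifting $B'$ back to $\cH$ gives $e(B)\le \Delta n^k$, contradicting non-extremality.
\end{itemize}

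\textbf{Finish.} We connect the paths from the cover together with $P_0$ into a cycle, using connections through $R$ via the connecting lemma. The leftover vertices, including the unused part of $R$, number at most $\beta^2 n$. Since $(k-\ell)\mid n$, the leftover has size divisible by $k-\ell$, so $P_0$ absorbs it and we obtain a Hamilton $\ell$-cycle.
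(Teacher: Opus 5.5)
Your overall plan has the same architecture as the paper's: connecting, absorbing and reservoir lemmas, plus a path-cover lemma proved via weak regularity and an almost-perfect tiling dichotomy in the cluster hypergraph, with non-extremality ruling out the sparse alternative. However, the way you propose to obtain the two key ingredients does not hold up.

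First, your connecting lemma rests on a false premise, and the argument you sketch for it cannot work. Greedy extension never gets stuck: an ordered $\ell$-end together with any $k-\ell-1$ fresh vertices is a $(k-1)$-set with linearly many neighbours. The genuine difficulty is closing up at a \emph{prescribed} target $\ell$-set, where for $\ell>k/2$ several edges share vertices with both ends. Non-extremality only says that no vertex set of size exactly $\lfloor(1-\frac{1}{s(k-\ell)})n\rfloor$ spans at most $\Delta n^k$ edges, so it gives no control over that local consistency problem. Nor is it needed. The paper states that the connecting, absorbing and reservoir lemmas of K\"uhn, Mycroft and Osthus~\cite{KMO10} hold for all such $k,\ell$ under a much weaker linear codegree bound, and it uses them as black boxes. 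So non-extremality enters only in the path cover.

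Second, the heart of the path cover is the tiling dichotomy, and your ``maximum tiling plus exchanges'' sketch is not yet an argument. It would also have to cope with the cluster hypergraph satisfying the codegree bound only for all but $\sqrt{\eps}t^{k-1}$ of its $(k-1)$-sets. The paper obtains this step from the Gao--Han--Zhao lemma (Lemma~\ref{lem:GHZ-2}) for $K^{(k)}(a,b,\dots,b)$ with $a=k-1$ and $b=s(k-\ell)-1$. With these parameters, $b>a$ because $(k-\ell)\nmid k$, $a/m=\frac{1}{s(k-\ell)}$, and $\mathcal F_{k,\ell}$ is a spanning subgraph of this $k$-partite $k$-graph. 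The lemma also explicitly tolerates $\eps n^{k-1}$ exceptional $(k-1)$-sets.

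Also, a copy of $\mathcal F_{k,\ell}$ in $\cR$ does not directly yield ``an almost spanning $\ell$-path''. The paper splits its clusters into $(s(k-\ell)-1)(k-1)$ regular $k$-tuples with part sizes $(k-1)m,(s(k-\ell)-1)m,\dots,(s(k-\ell)-1)m$. It then greedily extracts boundedly many \emph{canonical} $\ell$-paths from each (Lemmas~\ref{lem:color} and~\ref{lem:bal-paths}). These use the small class exactly once every $s$ edges and so match those proportions. Boundedly many paths is all the final connecting step needs.
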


\begin{theorem}\emph{(Extremal case).}\label{thm:extremal}
For any integer $k \geq 3$ such that $(k-\ell)\nmid k$, there exists $\Delta>0$ such that the following holds. 
Suppose that $\cH=(V,E)$ is an $n$-vertex $k$-graph such that $n \in (k - \ell)\mathbb{N}$ is sufficiently large.
If $\cH$ is $\Delta$-extremal with either 
\begin{itemize}
     \item $\delta_{k-1}(\cH) \geq \frac{n}{s(k - \ell)}$ for $k/2< \ell<3k/4$,
or    \item $\delta_{k-1}(\cH) \geq \frac{n}{s(k - \ell)}+\frac{k^2}{2}$ for $k/2< \ell<k$, 
\end{itemize}
then $\cH$ contains a Hamilton $\ell$-cycle.
\end{theorem}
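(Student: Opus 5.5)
We prove Theorem~\ref{thm:extremal} by first pinning down the structure of a $\Delta$-extremal $\cH$, then building the Hamilton $\ell$-cycle out of a specific periodic pattern. Write $a:=\lfloor n/(s(k-\ell))\rfloor$, so $|B|=n-a$, and let $A:=V\setminus B$ with $|A|\approx n/(s(k-\ell))$.

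The pattern we aim for is the following. In a Hamilton $\ell$-cycle, the vertices come in $n/(k-\ell)$ consecutive blocks of size $k-\ell$. Every $k$ consecutive vertices meet at least $s-1$ such blocks completely, and since $(k-\ell)\nmid k$ they in fact touch $s$ blocks. So if we place exactly one $A$-vertex in every $s$-th block, with the vertices placed at suitable positions, then every edge of the cycle contains exactly one $A$-vertex. This requires $|A|\approx n/(s(k-\ell))$, which is precisely the extremal size. The extremal configuration (all edges meet $A$) shows that this count is tight. The proof therefore consists of finding an $\ell$-cycle in which each edge has type $(1,k-1)$, meaning one vertex from $A$ and $k-1$ from $B$, while correcting for the defects of $\cH$.

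\textbf{Step 1 (cleaning).} Since $e(B)\le\Delta n^k$ and $\delta_{k-1}\ge |A|$, almost every $(k-1)$-subset of $B$ has almost all of its neighbours in $A$. We call $v\in B$ \emph{typical} if almost all $(k-2)$-sets in $B$ together with $v$ have large neighbourhood in $A$, and we define $A$-typical vertices similarly. We collect the atypical vertices into a set $V_0$ of size $O(\Delta^{c}n)$.

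\textbf{Step 2 (exact partition).} We move atypical vertices to whichever side they behave like, producing a partition $A'\cup B'$ with $|A'|$ close to $a$. We then compare $|A'|$ with the exact value $n/(s(k-\ell))$ that a balanced cycle requires.
\begin{itemize}
\item If $|A'|$ is too large, we absorb surplus $A$-vertices by using a few edges with two $A$-vertices. Such edges can be found greedily, since $A$-typical vertices have huge degree.
\item If $|A'|$ is too small, we need edges inside $B'$. Here the co-degree condition is used: any $(k-1)$-set in $B'$ has at least $\delta_{k-1}-|A'|$ neighbours in $B'$.
\end{itemize}
This difference is where the exact bound $\frac{n}{s(k-\ell)}$, or the extra $k^2/2$, enters. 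It guarantees a matching or small family of vertex-disjoint edges in $B'$ (or short $\ell$-paths using them) whose size equals the deficit. The divisibility and rounding in $n/(s(k-\ell))$ versus $a$ determine how many such "$B$-heavy" segments are needed.

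\textbf{Step 3 (assembly).} We cover $V_0$ and the correcting edges by short vertex-disjoint $\ell$-paths whose ends are typical $\ell$-sets. We then connect everything into a single cycle through the remaining typical vertices, using the periodic pattern. The connection step is a greedy or Hall-type argument: the bipartite-like structure between typical $(k-1)$-sets in $B'$ and vertices in $A'$ is almost complete, so we can link ends and then cover all remaining vertices exactly.

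\textbf{Main obstacle.} The hard step is Step 2 in the deficit case under the exact bound $\delta_{k-1}\ge n/(s(k-\ell))$. We must show that the surplus degree forces enough disjoint $B$-heavy edges, arranged so that they fit into $\ell$-paths compatible with the pattern. When only one or two extra edges are available, it becomes a delicate case analysis of how an edge inside $B$ can be inserted into the cycle while it still shifts the $A$/$B$ balance by the right amount. The condition $\ell<3k/4$ presumably ensures that each $B$-heavy insertion changes the balance by the right unit, through the geometry of the overlaps $\ell$ versus $k-\ell$. I would first try inserting a single edge $e\subseteq B$ where the pattern has one $A$-vertex, and compute how the block offsets shift. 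For general $\ell$, I would fall back on the additive $k^2/2$ slack, which provides $\Omega(k)$ disjoint edges in $B$ greedily and lets the balancing be done with room to spare.
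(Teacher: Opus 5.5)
\paragraph{Main gap: making the $B'$-paths linkable.} Your deficit bookkeeping is right in spirit, but the step that consumes the hypotheses is only asserted. When $q=|A\cap B'|>0$, the co-degree bound gives every $(k-1)$-set of $B$ at least $q$ neighbours inside $B'$. Each edge inside $B'$ that enters the cycle shifts $(sk-s\ell-1)|A'|-|B'|$ by exactly $k-\ell$, while moving $q$ vertices out of $A$ creates a deficit of $s(k-\ell)q$, so you need $sq$ such edges. These edges are plentiful: there are $\Omega(qn^{k-1})$ of them, and a matching of size $sq$ comes greedily because $B'$-vertices have small degree in $B'$. So the ``delicate case analysis when only one or two extra edges are available'' is not where the difficulty lies.

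The real gap is your Step~3 sentence claiming the correcting edges can be put into short $\ell$-paths with typical ends. To attach a path through $A'(B')^{k-1}$-edges, each end and its nested sub-ends of sizes $\ell-(i-1)(k-\ell)$, $i\in[s-1]$, must have small degree into $B$ (the paper's ``linkable''). For $\ell>k/2$ this fails naively. Take $e=K\cup\{u\}$ with $K\subseteq B$ free of bad subsets. The two $\ell$-ends of the one-edge path $e$ cover all of $e$, so $u$ lies in an end, which may well be bad, and nothing in your plan buries $u$. This is exactly where the hypotheses are used, and your guesses about them are off: $\ell<3k/4$ does not affect the balance per insertion, and the extra $k^2/2$ gives no ``room to spare'' for balancing.

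\paragraph{What the paper does instead.} For $k/2<\ell<3k/4$, it picks matching edges $e_i=K_i\cup\{u_i\}$ with $K_i$ free of $\eps_1^2/3$-bad subsets. If the current $\ell$-end is $\eps_1$-bad, it has more than $\eps_1\binom{|B|}{k-\ell}$ extensions inside $B$, and the paper extends up to three times on each side. Here $s\le 4$ means at most three nested sub-ends matter, and $3\ell-2k<k-\ell$ keeps the smallest sub-end inside freshly chosen good vertices. Also $3(k-\ell)\ge\ell$ means three rounds renew the whole $\ell$-end, leaving $\Omega(\eps_1^3|B|^{\ell})$ candidate ends, one of which is linkable.

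With the extra $k^2/2$, some vertex $u$ lies in roughly $\frac{q+k^2/2}{k-1}\binom{|B|}{k-2}$ edges $K\cup\{u\}$ with $K$ good. For $q\ge 1$ this exceeds $\frac{k+1}{2}\binom{|B|}{k-2}\ge\mathrm{ex}(|B|,P^{k-1}_k)$. So the link of $u$ contains a tight path, and $u$ sits strictly inside an $\ell$-path of length $s$ whose ends avoid $u$.

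\paragraph{Further points.} The same ``bury the uncontrolled vertex'' problem arises for $V_0$, which you also only assert; the paper handles it with the same Tur\'an bound in the link. Your surplus fix via edges with two $A$-vertices is unjustified, since large degree of $A$-typical vertices into $B$ says nothing about $A^2B^{k-2}$-edges. It is also unnecessary: a segment of $s-1$ edges all through one $A'$-vertex raises the $A'$-density while keeping every edge of type $(1,k-1)$. Finally, the ``greedy or Hall-type'' exact cover is left unspecified. The paper needs a genuine tool there: the Lu--Sz\'ekely local lemma for random injections, applied after pre-fixing the first and last $s$ edges at the prescribed ends.
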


\subsection{Hightlights of proof ideas}
Our proof of the non-extremal case (Theorem~\ref{thm:nonextremal}) follows the absorption framework developed in previous works and in fact most of the parts are already proven.
Indeed, an absorption lemma and a path-connecting lemma (which work under a much weaker minimum c-odegree condition) have been proven by K\"uhn, Mycroft and Osthus in~\cite{KMO10}, so that it suffices to prove a path cover lemma, which we derive from the (weak) regularity lemma and a tiling result in~\cite{GHZ19}.

For the extremal case (Theorem~\ref{thm:extremal}), as in previous works, we use the structural information to derive a partition of the vertex set of $\cH$ and analyze it.
There are then two steps for building a Hamilton $\ell$-cycle in $\cH$.
Indeed, we find a short path $\cP$ (of length $o(n)$) that covers all atypical vertices and leaves the rest part of the partition with a correct ratio; and then we extend $\cP$ to a Hamilton cycle.
New ideas are employed in both steps:
to build $\cP$, we use a (best-known) upper bound of the Tur\'an number of the hypergraph tight path by F\"uredi et al.~\cite{FJKMV20}; for the second step, we use a version of the Lov\'asz Local Lemma for random injections by Lu and Sz\'ekely~\cite{LS2007}.
Moreover, we show that the second step can be completed under the conjectured minimum co-degree condition, and thus reduce the validation of Conjecture~\ref{conj:co-degree} to the completion of the first step (see Theorem~\ref{thm:main result}).
To prove Theorem~\ref{thm:extremal}, we show that the first step can be achieved under the minimum co-degree assumptions therein.

For the rest of the paper, we first introduce several relevant lemmas and prove the non-extremal case in Section~\ref{sec:2}.
We then provide the proof of the extremal case in Section~\ref{sec:3}.

\section{Auxiliary lemmas and proof of Theorem~\ref{thm:nonextremal}}\label{sec:2}
In this section, we start with some relevant properties and then state the proof of Theorem~\ref{thm:nonextremal}.
\subsection{Auxiliary lemmas}
For $r \geq 1$ and $k \geq 2$, let
$P_r^k $ be the $k$-uniform $(k-1)$-path of length $r$. Denote by $\text{ex}(n, P_r^k)$ the maximum number of edges in an $n$-vertex $k$-graph not containing $P_r^k$. 
F\"uredi et al.~\cite{FJKMV20} gave the following upper bound of $\text{ex}(n, P_r^k)$.
\begin{lemma}\label{turan number}\emph{\cite{FJKMV20}}
For $n \geq 1$, $k \geq 2$ and $r \geq 1$,
\[
\mathrm{ex}(n, P^{k}_{r}) \leq 
\begin{cases} 
\frac{r-1}{2}\binom{n}{k-1}, & \text{if } k \text{ is even}, \\[10pt]
\frac{1}{2} \left( r + \left\lfloor \frac{r-1}{k} \right\rfloor \right) \binom{n}{k-1}, & \text{if } k \text{ is odd}.
\end{cases}
\]
\end{lemma}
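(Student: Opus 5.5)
This is the Turán-type bound of F\"uredi, Jiang, Kostochka, Mubayi and Verstra\"ete; in the paper it is simply imported from~\cite{FJKMV20}. If I were to reprove it, I would use a deletion (``peeling'') argument combined with greedy path extension, plus a careful count to gain the factor $\frac12$. The plan below is a sketch and I have not checked that it closes.

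\textbf{Peeling step.} Let $\cH$ be an $n$-vertex $k$-graph with no copy of $P^k_r$. Repeatedly choose a $(k-1)$-set $S$ whose current co-degree in the remaining hypergraph is positive but at most a threshold $t$, and delete all edges containing $S$. Each $(k-1)$-set is processed at most once, so at most $t\binom{n}{k-1}$ edges are removed in total. It therefore suffices to show that the remaining hypergraph $\cH'$ is empty. In $\cH'$, every $(k-1)$-set lying in some edge has co-degree greater than $t$.

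\textbf{Extension step.} Suppose $\cH'$ is nonempty. Then a tight path can be built greedily: the last $k-1$ vertices of the current path form a $(k-1)$-set lying in an edge, so it has more than $t$ extensions, and at most (current length) of them reuse a vertex of the path. With $t\approx r$, this already produces $P^k_r$ and gives the weak bound $\mathrm{ex}(n,P^k_r)\le r\binom{n}{k-1}$.

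\textbf{Gaining the factor $\frac12$.} This is the main obstacle. To halve the threshold, I would grow the path from both ends at once, alternating between extending at the front end and at the back end. A vertex is then forbidden only if it lies close to the end being extended, rather than anywhere on the path. Because the path is tight, a new vertex appended at one end collides only with vertices whose reuse would actually create a repeated vertex. The plan is to show that a co-degree threshold of roughly $\frac{r-1}{2}$ suffices, with the two ends sharing the length budget.

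\textbf{Parity and the odd case.} To make the counting of forbidden vertices precise, I would first pass to an auxiliary $k$-partite structure by ordering the vertices cyclically. The idea is that the vertices of a tight path occupy the $k$ classes periodically, and each end can only be extended into one prescribed class. For even $k$ the two ends can be arranged to use disjoint sets of classes, which gives exactly $\frac{r-1}{2}$. For odd $k$ the periodicity forces some overlap between the two ends once every $k$ steps. This is where I would expect the correction term $\lfloor\frac{r-1}{k}\rfloor$ to appear.

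The delicate part is removing the loss from the $k$-partite reduction. I would do this by weighting: instead of deleting whole co-degree classes, assign each edge fractionally to its $k$ subsets of size $k-1$. I expect this weighted accounting to be the hardest step to get exactly right.
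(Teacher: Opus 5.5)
\textbf{There is a genuine gap: the peeling reduction cannot produce the factor $\frac12$.} This is the step you single out as the main obstacle, and it fails outright, not just for lack of care.

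You reduce to showing that once every edge through a $(k-1)$-set of co-degree at most $t\approx\frac{r-1}{2}$ is deleted, the remainder $\cH'$ is empty. Take $\cH$ to be a disjoint union of copies of the complete $k$-graph on $r+k-2$ vertices. Every $(k-1)$-set lying in an edge has co-degree exactly $r-1$, so for every $t<r-1$ nothing is deleted. Yet $\cH$ contains no $P^k_r$, which has $r+k-1$ vertices. For $k=2$ this is the Erd\H{o}s--Gallai example, disjoint copies of $K_r$. So any argument that only uses a lower bound on co-degrees in $\cH'$ is stuck at $t=r-1$, i.e.\ at the trivial bound $(r-1)\binom{n}{k-1}$ you already have.

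The heuristic behind growing from both ends is also false. A tight path must have distinct vertices, so when it has $m$ edges, all $m$ path vertices outside the current end $(k-1)$-set are forbidden, whichever end you extend. Alternating ends saves nothing, and in an unstructured hypergraph nothing makes distant path vertices harmless.

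\textbf{What is missing is the extra structure that makes distant vertices harmless.} This is how the source \cite{FJKMV20} proceeds: the lemma is a corollary of their theorem on zigzag-type tight paths in convex geometric hypergraphs. The following is only a rough outline of that mechanism.
\begin{enumerate}
\item Place the vertices in convex position.
\item For each $(k-1)$-set, delete only a bounded number of edges that are extremal with respect to the cyclic order in prescribed positions. This replaces deleting everything through low co-degree sets.
\item Extend a surviving edge by extremal choices. The new vertices then advance monotonically around the circle and cannot collide with earlier ones.
\item The extensions at the two ends go into separated regions. This is what lets a per-$(k-1)$-set budget of about $\frac{r-1}{2}$ suffice, with the parity of $k$ entering through the geometry of the zigzag.
\end{enumerate}
Your closing remarks on cyclic orderings, $k$-partite classes and fractional charging point vaguely in this direction. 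However, they specify neither a deletion rule, nor an extension rule, nor a non-collision argument, so as written the proposal proves nothing beyond $(r-1)\binom{n}{k-1}$. The paper itself gives no proof here; it simply imports the bound.
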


Let $\cH=(V, E)$ be a $k$-graph and let $A_{1}, \ldots, A_{k}$ be mutually disjoint non-empty subsets of $V$. We define $e\left(A_{1}, \ldots, A_{k}\right)$ to be the number of \emph{crossing} edges, that is, those with exactly one vertex in each $A_{i}, i \in[k]$. The density of $\cH$ with respect to $\left(A_{1}, \ldots, A_{k}\right)$ is defined as
\[
d\left(A_{1}, \ldots, A_{k}\right)=\frac{e\left(A_{1}, \ldots, A_{k}\right)}{\left|A_{1}\right| \cdots\left|A_{k}\right|}.
\]

Given $\varepsilon>0$ and $d \geq 0$, we say a $k$-tuple $\left(V_{1}, \ldots, V_{k}\right)$ of mutually disjoint subsets $V_{1}, \ldots, V_{k} \subseteq V$ is $(\varepsilon, d)$-\textit{regular} if
\[
|d(A_{1}, \ldots, A_{k})-d| \leq \varepsilon
\]
for all $k$-tuples of subsets $A_{i} \subseteq V_{i}, i \in[k]$, satisfying $|A_{i}| \geq \varepsilon|V_{i}|$. We say $(V_{1}, \ldots, V_{k})$ is $\varepsilon$-\textit{regular} if it is $(\varepsilon, d)$-regular for some $d \geq 0$. It is immediate from the definition that in an $(\varepsilon, d)$-regular $k$-tuple $\left(V_{1}, \ldots, V_{k}\right)$, if $V_{i}^{\prime} \subseteq V_{i}$ has size $\left|V_{i}^{\prime}\right| \geq c\left|V_{i}\right|$ for some $c \geq \varepsilon$, then $\left(V_{1}^{\prime}, \ldots, V_{k}^{\prime}\right)$ is $(\varepsilon / c, d)$-regular.

Now we state the Weak Regularity Lemma for hypergraphs, which is a straightforward extension of Szemerédi's regularity lemma for graphs~\cite{Szm1978}.

\begin{theorem}[Weak Regularity Lemma]\label{thm:weak lem}
Given $t_{0} \geq 0$ and $\varepsilon>0$, there exist $T_{0}=T_{0}\left(t_{0}, \varepsilon\right)$ and $n_{0}=n_{0}\left(t_{0}, \varepsilon\right)$ such that, for every $k$-graph $\cH=(V, E)$ on $n>n_{0}$ vertices, there exists a partition $V=V_{0} \cup V_{1} \cup \cdots \cup V_{t}$ such that:
\begin{enumerate}
    \item[(i)] $t_{0} \leq t \leq T_{0}$,
    \item[(ii)] $|V_{1}|=|V_{2}|=\cdots=|V_{t}|$ and $|V_{0}| \leq \varepsilon n$,
    \item[(iii)] for all but at most $\varepsilon\binom{t}{k}$ $k$-subsets $\left\{i_{1}, \ldots, i_{k}\right\} \subset[t]$, the $k$-tuple $\left(V_{i_{1}}, \ldots, V_{i_{k}}\right)$ is $\varepsilon$-regular.
\end{enumerate}
\end{theorem}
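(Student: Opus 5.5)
The Weak Regularity Lemma (Theorem~\ref{thm:weak lem}) should follow from the standard energy-increment (index-pumping) argument, exactly as in Szemer\'edi's original proof for graphs. The only change is that we track the \emph{index} of a partition over $k$-tuples of parts rather than pairs.

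For a partition $\cP=\{V_0,V_1,\dots,V_t\}$ with equal-size parts $V_1,\dots,V_t$, define
\[
q(\cP)=\frac{1}{t^k}\sum_{\{i_1,\dots,i_k\}\subseteq[t]} d(V_{i_1},\dots,V_{i_k})^2 .
\]
This quantity is bounded above by a constant depending only on $k$, since every density lies in $[0,1]$. We start from an arbitrary equipartition into $t_0$ parts, with a leftover set $V_0$ of size less than $t_0$. We then iterate: if more than $\varepsilon\binom{t}{k}$ of the $k$-tuples are not $\varepsilon$-regular, we refine.

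\textbf{Refinement step.} For each irregular tuple $(V_{i_1},\dots,V_{i_k})$, pick witness subsets $A_{i_j}\subseteq V_{i_j}$ with $|A_{i_j}|\ge\varepsilon|V_{i_j}|$ and $|d(A_{i_1},\dots,A_{i_k})-d(V_{i_1},\dots,V_{i_k})|>\varepsilon$. Each $V_i$ then receives at most $\binom{t-1}{k-1}$ witness sets. We take the common refinement (Venn diagram) of these, which splits $V_i$ into at most $2^{\binom{t-1}{k-1}}$ atoms.

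By Cauchy--Schwarz (the defect form), the index never decreases under refinement. Moreover, each irregular $k$-tuple contributes a gain of at least roughly $\varepsilon^{2}\cdot\varepsilon^{k}$ to the sum: the witness tuple has relative weight $\prod_j |A_{i_j}|/|V_{i_j}|\ge\varepsilon^k$, and its density deviates by more than $\varepsilon$. Summing over the more than $\varepsilon\binom tk$ irregular tuples gives a total index increment of at least $c_k\varepsilon^{k+3}$.

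\textbf{Re-equalising.} We then chop every atom into pieces of a common size $m\approx |V_i|/(4^{\binom{t}{k-1}})$ and throw the remainders into $V_0$. This adds at most about $\varepsilon n/2^{\,\text{step}}$ vertices to $V_0$ per step, once $t_0$ is chosen large in terms of $\varepsilon$, so $|V_0|\le\varepsilon n$ holds throughout. The chopping is a further refinement, so it does not decrease the index. Moving a small number of vertices to $V_0$ changes the index only by $O(|V_0|/n)$, which is absorbed into the increment.

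\textbf{Termination.} Since the index is bounded, the process stops after at most $O_k(\varepsilon^{-(k+3)})$ steps. The number of parts then obeys a tower-type bound, which gives $T_0(t_0,\varepsilon)$. Requiring $n_0$ large enough makes all parts nonempty and the rounding errors negligible.

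\textbf{Main obstacle.} The one delicate point is the index-increment computation for $k$-tuples: showing that a single irregular witness raises $\sum d^2$, weighted over the refined cells, by at least $\varepsilon^{k+2}$ times the weight of the tuple. This follows from the identity $\mathbb E[X^2]=(\mathbb E X)^2+\mathrm{Var}(X)$, applied to the random variable recording the density of a uniformly random cell $k$-tuple inside $(V_{i_1},\dots,V_{i_k})$. Since the paper treats this result as standard (a straightforward extension of~\cite{Szm1978}), it would be cited rather than proved in full.
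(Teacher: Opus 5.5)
Your proposal is correct: it is the standard Szemer\'edi energy-increment (index-pumping) argument carried over to $k$-tuples of clusters, and the paper gives no proof of its own, simply citing the lemma as a straightforward extension of \cite{Szm1978}, so yours is the intended route. One piece of bookkeeping needs restating: the quantity absorbed into the increment $c_k\varepsilon^{k+3}$ must be the set of vertices moved to $V_0$ in a single step (at most $n/2^{\binom{t}{k-1}}$ with your piece size), not $|V_0|/n$, which can be as large as $\varepsilon$; and because $t_0$ is given rather than chosen, you make this loss small by starting from $\max\{t_0,C(k,\varepsilon)\}$ clusters.
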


The partition given in Theorem~\ref{thm:weak lem} is called an $\varepsilon$-\textit{regular partition} of $\cH$. Given an $\varepsilon$-regular partition of $\cH$ and $d\geq0$, we refer to $V_i, i \in [t]$ as \textit{clusters} and define the \textit{cluster hypergraph} $\cR = \cR(\varepsilon, d)$ with vertex set $[t]$ in which $\{i_1, \ldots, i_k\} \subseteq [t]$ is an edge if and only if $(V_{i_1}, \ldots, V_{i_k})$ is $\varepsilon$-regular and $d(V_{i_1}, \ldots, V_{i_k}) \ge d$.

The following proposition shows that the cluster hypergraph inherits the minimum co-degree of the original hypergraph. The proof is standard and very similar to that of \cite[Proposition 16]{HS2010}, so we omit the proof.

\begin{proposition}\emph{\cite{HS2010}} \label{cor:re-partition}
Given $c,\eps,d>0$, integers $k\ge 3$ and $t_0$, there exist $T_0$ and $n_0$ such that the following holds. 
Let $\cH$ be a $k$-graph on $n > n_0$ vertices with $\delta_{k-1}(\cH)\ge cn$.
Then $\cH$ has an $\varepsilon$-regular partition $V_0\cup V_1\cup \cdots \cup V_t$ with $t_0 \le t \le T_0$, and in the cluster hypergraph $R = R(\varepsilon, d)$, all but at most $\sqrt{\varepsilon} t^{k-1}$ $(k-1)$-subsets $S$ of $[t]$ satisfy $\deg_\cR(S) \ge (c-d-\sqrt{\varepsilon})t-(k-1)$.
\end{proposition}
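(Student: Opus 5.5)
We apply the Weak Regularity Lemma (Theorem~\ref{thm:weak lem}) to $\cH$ with parameters $t_0$ and $\eps$, obtaining a partition $V_0\cup V_1\cup\cdots\cup V_t$ with $|V_0|\le \eps n$ and all clusters of a common size $m:=|V_1|$. The goal is to compare the co-degree of a typical $(k-1)$-set of clusters in $\cR=\cR(\eps,d)$ with the co-degree of $\cH$. Let $\mathcal S$ be the set of $(k-1)$-subsets $S=\{i_1,\dots,i_{k-1}\}\subseteq[t]$. For each $S\in\mathcal S$, choose one vertex from each cluster of $S$ and sum the co-degrees in $\cH$ over all $m^{k-1}$ such transversal $(k-1)$-sets. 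The minimum co-degree hypothesis gives
\[
\sum_{v_j\in V_{i_j}} \deg_\cH(\{v_1,\dots,v_{k-1}\})\ge m^{k-1}cn .
\]

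Next we split this sum by where the $k$-th vertex $u$ lies, and bound each part from above. Vertices $u\in V_0$ contribute at most $\eps n m^{k-1}$. Vertices $u$ in one of the clusters $V_{i_j}$ of $S$ contribute at most $(k-1)m\cdot m^{k-1}$, which is negligible since $m\le n/t_0$. Vertices $u$ in clusters $V_i$ with $i\notin S$ contribute $e(V_{i_1},\dots,V_{i_{k-1}},V_i)$ in total. We bound this crossing count by $m^k$ when $S\cup\{i\}\in\cR$, and by $d\,m^k$ when the tuple is regular with density below $d$. When the tuple is irregular we can only bound it by $m^k$. Writing $\mathrm{irr}(S)$ for the number of $i\notin S$ with $(V_{i_1},\dots,V_{i_{k-1}},V_i)$ irregular, we get
\[
cn\, m^{k-1}\le m^{k}\bigl(\deg_\cR(S)+dt+\mathrm{irr}(S)\bigr)+\eps n m^{k-1}+(k-1)m^{k}.
\]
Dividing by $m^k$ and using $n\ge tm$ together with $n\le tm+\eps n$, i.e.\ $n/m\le t/(1-\eps)$, this yields
\[
\deg_\cR(S)\ge (c-d-2\eps)t-(k-1)-\mathrm{irr}(S)
\]
after absorbing constants for small $\eps$.

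The main obstacle is controlling $\mathrm{irr}(S)$, which we do by double counting. Each irregular $k$-set of clusters contains exactly $k$ subsets of size $k-1$, so
\[
\sum_S \mathrm{irr}(S)\le k\,\eps\binom{t}{k}\le \eps t^{k}.
\]
Hence the number of $S$ with $\mathrm{irr}(S)\ge \frac{\sqrt\eps}{2} t$ is at most $2\sqrt\eps\, t^{k-1}$. Rescaling $\eps$, or equivalently applying the regularity lemma with $\eps/4$ in place of $\eps$, reduces this to at most $\sqrt\eps\,t^{k-1}$ exceptional sets. For every remaining $S$, the bounds combine to give
\[
\deg_\cR(S)\ge (c-d-\sqrt\eps)t-(k-1),
\]
since $2\eps+\sqrt\eps/2\le\sqrt\eps$ for small $\eps$. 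Finally, $T_0$ and $n_0$ are taken from Theorem~\ref{thm:weak lem} for the rescaled $\eps$ and for $t_0$, with $n_0$ also large enough that the rounding above is valid.
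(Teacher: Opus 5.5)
Your argument is correct. It is the standard double-counting proof from \cite[Proposition 16]{HS2010}, which the paper cites in place of giving its own proof: sum the co-degrees over transversal $(k-1)$-sets in $V_{i_1}\times\cdots\times V_{i_{k-1}}$, split the sum by the cluster containing the $k$-th vertex, and use Markov's inequality to bound the number of $(k-1)$-sets lying in many irregular $k$-tuples. Your rescaling to $\eps/4$ is harmless, since an $\eps/4$-regular partition is also $\eps$-regular and the number of non-$\eps$-regular $k$-tuples can only decrease, so the degree bound carries over to $\cR(\eps,d)$.
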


The following lemma allows a small number of $(k-1)$-subsets of $V(\cH)$ to have low degree.
Given two $k$-graphs $\cG$ and $\cH$, a \emph{$\cG$-tiling} in $\cH$ is a subhypergraph of $\cH$ that consists of vertex-disjoint copies of $\cG$. 
Denote by $K^{(k)}(a_1,\dots,a_k)$ the complete $k$-partite $k$-graph with parts of size $a_1,\dots,a_k$.


\begin{lemma}\emph{\cite{GHZ19}}\label{lem:GHZ-2}
Fix integers $k \ge 2$ and $a < b$, $0 < \gamma_3 \ll 1/m$ and let $K:= K^{(k)}(a, b, \dots, b)$ and $m=a+(k-1)b$. For any $\beta > 0$ and $\Delta\ge 5bk^2\gamma_3$, there exist $\varepsilon > 0$ and an integer $n_0$ such that the following holds. Suppose $\cH$ is a $k$-graph on $n > n_0$ vertices with $\deg(S) \ge (\frac{a}{m}- \gamma_3)n$ for all but at most $\varepsilon n^{k-1}$ sets $S \in \binom{V(\cH)}{k-1}$, then $\cH$ has a $K$-tiling that covers all but at most $\beta n$ vertices or $\cH$ is $\Delta$-extremal.

\end{lemma}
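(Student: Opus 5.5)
The plan is to prove the statement by the regularity method. First reduce to an almost perfect \emph{fractional} tiling problem in the cluster hypergraph. Then carry out a stability analysis there, so that a failure of the tiling forces the extremal configuration.

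\textbf{Step 1: regularize.} Choose constants $1/n_0 \ll \varepsilon \ll d \ll \gamma_3, \beta$ and apply Theorem~\ref{thm:weak lem} to $\cH$ to obtain clusters $V_1,\dots,V_t$. The argument of Proposition~\ref{cor:re-partition} still works when only all but $\varepsilon n^{k-1}$ sets $S$ have large degree, since the exceptional sets only spoil $O(\sqrt{\varepsilon})t^{k-1}$ cluster $(k-1)$-sets. So in $\cR=\cR(\varepsilon,d)$, all but at most $\sqrt{\varepsilon}\,t^{k-1}$ of the $(k-1)$-subsets of $[t]$ have degree at least $(\frac{a}{m}-2\gamma_3)t$.

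\textbf{Step 2: from a fractional tiling to an integral one.} Call a \emph{fractional $K$-tiling} of $\cR$ a weighting $w$ of pairs (edge $e$, designated vertex $i\in e$) with the following property: an edge $e$ with designated vertex $i$ puts weight $a\,w$ on $i$ and $b\,w$ on each other vertex of $e$, and the total load on every vertex is at most $1$.

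Suppose $\cR$ has such a tiling with total load at least $(1-\beta/2)t$. Then split each cluster proportionally to the loads it receives. Each edge of $\cR$ with its weight becomes a regular $k$-tuple whose part sizes are in ratio $a:b:\dots:b$. Inside a dense $\varepsilon$-regular $k$-tuple, the counting lemma for complete $k$-partite $k$-graphs lets us greedily pull out copies of $K$ until only an $\varepsilon^{1/2}$-fraction of each part remains. Summing over all edges of the weighting gives a $K$-tiling of $\cH$ missing at most $\beta n$ vertices. So it suffices to show: either such a fractional tiling exists, or $\cH$ is $\Delta$-extremal.

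\textbf{Step 3: stability in $\cR$ (main obstacle).} Take a fractional $K$-tiling of maximum total load and let $U$ be the set of vertices with load less than $1$. Suppose, towards a contradiction with non-extremality, that $|U|\ge \beta t/2$.

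By maximality, $\cR[U]$ contains almost no edges; otherwise we could add weight on such an edge. Hence for typical $(k-1)$-sets $S\subseteq U$, almost all of the $\ge(\frac{a}{m}-2\gamma_3)t$ neighbours of $S$ lie in the loaded part. We then double count pairs ($S$, neighbour of $S$) against the tiles carrying weight.

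A local improvement is available if some weighted edge $e$, with its designated vertex, receives ``too many'' neighbours from $U$. Concretely, we would remove weight from $e$ and redistribute it onto new edges formed by $(k-1)$-sets in $U$ together with vertices of $e$, using the designated vertex as the small ($a$-)part. Such a move strictly increases the total load. The threshold $\frac{a}{m}$ is exactly the average density at which no such improvement exists. Therefore, up to errors of order $\gamma_3$ and $\beta$, every weighted tile is hit only at its designated vertices.

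This forces the neighbourhoods of typical $(k-1)$-sets to concentrate on a set $A\subseteq[t]$ of size about $\frac{a}{m}t$ (the union of the small parts). Consequently $B=[t]\setminus A$ spans at most $O(bk^2\gamma_3)t^k$ edges of $\cR$.

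\textbf{Step 4: transfer back to $\cH$.} Pulling $B$ back to $\cH$ (adding $V_0$ and adjusting its size to $\lfloor(1-\frac{a}{m})n\rfloor$) gives a set with at most $5bk^2\gamma_3 n^k\le \Delta n^k$ edges. Here we account for the edges in irregular tuples, in tuples of density below $d$, and in tuples not crossing the partition. So $\cH$ is $\Delta$-extremal.

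The delicate part is making the switching argument in Step 3 quantitative. One must show that the weaker per-tile bound forces near-exact structure rather than merely an averaged inequality. I would first try LP duality: a fractional cover $y$ with $\sum_v y_v<(1-\beta/2)t$, satisfying $a\,y_i+b\sum_{j\in e\setminus\{i\}}y_j\ge 1$ for every edge $e$ and every $i\in e$. Then analyse the level sets of $y$ using the co-degree condition. This should directly produce the sparse set $B$ as the set where $y$ is small.
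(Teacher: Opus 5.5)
The paper does not prove this lemma; it quotes it from \cite{GHZ19}. So your proposal has to stand on its own. Steps 1, 2 and 4 are standard, but Step 3, which carries the whole content of the lemma, has a genuine gap.

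\textbf{The gap in Step 3.} Done carefully, your exchange argument gives something exact but weak. At an optimal fractional tiling, $U$ spans no edge of $\cR$. Moreover, every neighbour $v$ of a $(k-1)$-set $S\subseteq U$ is a full vertex that is designated in \emph{every} positively weighted pair containing it. Otherwise, moving weight $\delta$ off a pair where $v$ is undesignated onto $(S\cup\{v\},v)$ with weight $\frac ba\delta$ gains $m(\frac ba-1)\delta$.

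This controls only the links of $(k-1)$-subsets of $U$, so the sentence ``Consequently $B=[t]\setminus A$ spans at most $O(bk^2\gamma_3)t^k$ edges'' does not follow. The set $[t]\setminus A$ also contains every full vertex that occurs undesignated. Edges meeting $U$ in at most $k-2$ vertices are not touched by the exchange you describe. Take an edge $S'\cup\{x,j\}$ with $S'\subseteq U$ and $x,j$ each undesignated in some pair. Placing weight $w$ on it forces you to remove weight at least $w$ if $x,j$ are undesignated in a common pair, and at least $(1+\frac ab)w$ otherwise. So the move never gains, and no contradiction arises.

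Also, ``the union of the small parts'' is not well defined for a fractional tiling, since a vertex can be designated in one pair and undesignated in another. Your last paragraph names LP duality but stops at ``analyse the level sets of $y$'', which is exactly where the argument is needed.

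\textbf{How to close it.} Use complementary slackness for an optimal primal--dual pair $(w^*,y^*)$. With total load as the objective, the dual constraints read $a y_i+b\,y(e\setminus\{i\})\ge m$, not $\ge 1$.
\begin{enumerate}
\item If the optimum is below $(1-\beta/2)t$, then $|U|>\beta t/2$, and complementary slackness gives $y^*\equiv 0$ on $U$.
\item So $U$ contains a non-exceptional $(k-1)$-set $S$ with $y^*(S)=0$. The constraint for $(S\cup\{v\},v)$ forces $y^*_v\ge m/a$ for every $v\in N(S)$.
\item Hence $H:=\{v:y^*_v\ge m/a\}$ satisfies $(\frac am-2\gamma_3)t\le |H|<\frac am t$. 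The remaining budget is $\sum_{v\notin H}y^*_v<\frac{2m}{a}\gamma_3 t$.
\item Every edge $e\subseteq [t]\setminus H$ has $b\,y^*(e)\ge m$. Double counting gives $e_{\cR}([t]\setminus H)<\frac{2b}{a}\gamma_3\, t\binom{t}{k-1}$.
\end{enumerate}
This is exactly the sparse set your Step 4 needs. The dual bounds edges inside $B$ through a global budget, which local exchanges do not provide.

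\textbf{Two smaller points in Step 2.}
\begin{itemize}
\item You need a basic optimal solution, whose support has size at most $t$. Only then can you discard pairs of weight below some $\eta=\eta(\beta,m)$ with $\varepsilon\ll\eta$. Since $t$ can be as large as $T_0(\varepsilon)$, tiny pieces would otherwise fail to inherit regularity.
\item Weak regularity gives no general counting lemma. What you actually use is Erd\H{o}s's theorem that a $k$-partite $k$-graph of positive density contains $K^{(k)}(a,b,\dots,b)$, and that suffices for the greedy extraction.
\end{itemize}
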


We need the lemma following from the proof of~\cite[Lemma 7.3]{KMO10} by choosing the size of the vertex set of an $\ell$-path. Recall that $s:=\lceil \frac{k}{k-\ell}\rceil$.

\begin{lemma}\label{lem:color}\emph{\cite{KMO10}}
Let $\cP$ be an $\ell$-path of length $\lambda s$ for some $\lambda\in \mathbb N$.
Then there is a $k$-coloring of an $\ell$-path $\cP$ with colors $1, \dots, k$ such that color $k$ is used $\lambda$ times and the sizes of all other color classes are as equal as possible. 
\end{lemma}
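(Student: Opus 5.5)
We read the statement as follows: we must colour the vertices of $\cP$ with colours $1,\dots,k$ so that every edge of $\cP$ is rainbow, that is, it meets every colour class in exactly one vertex. This is the property needed for embedding $\cP$ into a complete $k$-partite $k$-graph. Write the vertices of $\cP$ as $v_1,\dots,v_N$ with $N=k+(\lambda s-1)(k-\ell)$, and the edges as $e_i=\{v_{(i-1)(k-\ell)+1},\dots,v_{(i-1)(k-\ell)+k}\}$ for $i\in[\lambda s]$. Group the vertices into consecutive blocks $B_1,B_2,\dots$ of $k-\ell$ vertices each; the last block is shortened by the $k$ leftover vertices, according to the indexing.

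The plan is to build the colouring greedily along the path. Colour $e_1$ bijectively with $[k]$. Passing from $e_i$ to $e_{i+1}$, exactly the first $k-\ell$ vertices of $e_i$ leave and $k-\ell$ new vertices enter. Since the $\ell$ shared vertices are already rainbow, the new vertices must receive precisely the multiset of colours of the departing vertices, in some order. Conversely, any choice of such bijections at every step yields a colouring in which every edge is rainbow. So the whole freedom is a permutation at each step, and the proof consists in using this freedom well.

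First we control colour $k$. Put $r:=s(k-\ell)-k$, so that $0\le r<k-\ell$. A vertex whose offset inside its block is $j\in\{1,\dots,k-\ell\}$ lies in exactly $s$ consecutive edges if $j\le k-\ell-r$, and in $s-1$ edges otherwise. In particular, the vertex of $e_1$ in position $k$ has offset $k-(s-1)(k-\ell)=k-\ell-r$, so it lies in exactly $e_1,\dots,e_s$; we give it colour $k$. Whenever a vertex of colour $k$ departs, we place colour $k$ on the new vertex at the same offset $k-\ell-r$, which again lies in exactly $s$ consecutive edges. By induction, the $\lambda s$ edges are partitioned into $\lambda$ runs of $s$ consecutive edges, each run covered by a single vertex of colour $k$. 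Hence colour $k$ is used exactly $\lambda$ times. We must also check that the last vertex of colour $k$ does not create an extra occurrence beyond $e_{\lambda s}$, which follows from $s\mid \lambda s$.

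It remains to equalise the other $k-1$ classes, and this is the step I expect to be the main obstacle. At each step, the non-$k$ colours leaving the current edge must be reassigned to the non-$k$ positions of the incoming block, and only this assignment is free. I would choose it so that colours that have been used least so far go to positions whose vertices are \emph{new}, and in the final block only to positions that are actually present. The difficulty is that the set of colours available to the incoming block is dictated by which colours are departing, so the process cannot simply always pick the globally least used colour.

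My first attempt would exploit symmetry instead. Any permutation $\sigma$ of $[k-1]$ applied to the colouring preserves the rainbow property, and the process is essentially periodic with period $s$ steps. So I would apply a cyclic shift of $[k-1]$ to the colour names at the start of each run of $s$ edges; this amounts to relabelling the non-$k$ tracks. Over $\lambda$ runs, each track then receives each colour at most $\lceil\lambda/(k-1)\rceil$ times. I then need to verify that the counts of the $k-1$ colours differ by at most one. This should follow by checking that the per-run contributions are identical up to the shift, with the only imbalance coming from the incomplete final run. If this bookkeeping fails, I would fall back on a local swapping argument: exchanging colours $c,c'$ on a tail segment of the path preserves the rainbow property. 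Repeatedly swapping the most and least used colours on a suitable tail then moves one unit between them until the counts are balanced.
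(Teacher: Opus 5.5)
Your rainbow reading of the statement is correct. So is your observation that entering vertices must receive exactly the departing colours. Your placement of colour $k$ at positions $k+js(k-\ell)$, $0\le j<\lambda$, also agrees with the paper: each such vertex covers a run of $s$ consecutive edges, so every edge has exactly one colour-$k$ vertex.

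\textbf{The gap.} It is the step you flag yourself. You never exhibit a colouring with colours $1,\dots,k-1$ that is both rainbow and balanced, and both mechanisms you offer are invalid as stated.
\begin{itemize}
\item \emph{Cyclic relabelling at the start of each run.} Recolouring all vertices after a cut by a permutation $\sigma$ of $[k-1]$ keeps a straddling edge rainbow only if $\sigma$ maps the colour set of that edge's right-hand part to itself. A nontrivial cyclic shift of $[k-1]$ has no invariant proper nonempty subset. There are always straddling edges with non-$k$ vertices on both sides. For a cut next to a colour-$k$ vertex $x=k+js(k-\ell)$, take $e_{js+2}$: using $\ell\ge 2$, it contains $x$, vertices before it and vertices after it.
\item \emph{Realising the shift greedily.} You cannot do it inside your greedy framework either, since only $k-\ell<k-1$ colours depart at each step.
\item \emph{The tail-swap fallback.} Swapping $c,c'$ on a positional tail destroys every straddling edge whose $c$-vertex and $c'$-vertex lie on opposite sides of the cut. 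The legitimate version swaps the targets of $c$ and $c'$ at a step where both depart. That exchanges the entire future tracks of the two colours, so the number of units transferred is uncontrolled and ``one unit at a time'' does not follow.
\item \emph{The counting in your first attempt.} It is not automatic either: sums of $\lambda$ cyclically shifted copies of a per-run count vector whose entries differ by one can differ by two.
\end{itemize}

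\textbf{The missing observation.} It makes all of this unnecessary. An edge is an interval of $k$ consecutive vertices containing exactly one colour-$k$ vertex. Hence its remaining $k-1$ vertices are \emph{consecutive} in the sequence obtained from $\cP$ by deleting the colour-$k$ vertices. Colour that reduced sequence cyclically $1,2,\dots,k-1,1,2,\dots$. Then every edge is rainbow, and the cyclic colouring automatically gives class sizes differing by at most one. This is exactly the paper's proof. In your greedy language it is the rule: at every step, send the departing non-$k$ colours to the entering non-$k$ positions in the same left-to-right order.
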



\begin{proof}
Let an $\ell$-path $\cP:= x_1x_2\dots x_{\ell+\lambda s(k-\ell)}$.
Then we color the vertices as follows.
\begin{itemize}
    \item Color $x_k, x_{k+s(k - \ell)}, x_{k+2s(k - \ell)}, \dots, x_{k+(\lambda-1)s(k - \ell)}$ with color $k$ and remove these vertices from $\cP$,
    \item color the remaining vertices in turn with colors $1, \dots, k-1$. 
\end{itemize}
In the above second step, color the first vertex with color $1$. Suppose that we just colored the $i$th vertex with some color $j$. Then we color the next vertex with color $j+1$ if $j \le k-2$ and with color $1$ if $j=k-1$.

To get a proper coloring, it suffices to show that every edge of $\cP$ contains some vertex of color $k$. Indeed,   it holds for the first edge $e_1$ of $\cP$ and for all edges intersecting $e_1$ since $x_k$ lies in all those edges. Define the first vertex of the $i$th edge $e_i$ as $x_{f(i)}$, where $f(i) = (i-1)(k-\ell)+1$.
Let $i^* := s+ 1$ be the smallest integer so that $f(i^*) > k$, that is, the $i^*$th edge $e_{i^*}$ is the first edge not containing $x_k$. But the vertices of $e_{i^*}$ are $x_{s(k - \ell)+1}, \dots, x_{s(k - \ell)+k}$. So $e_{i^*}$ and all successive edges intersecting $e_{i^*}$ contain $x_{s(k - \ell)+k}$, a vertex of color $k$. Continue this process and we get a proper $k$-coloring.  
\end{proof}

Let $\cH$ be a $k$-partite $k$-graph with partition classes $V_1, V_2, \dots, V_k$.
Given $1\le \ell <k$, an $\ell$-path $\cP$ of length $\lambda s$ is called \emph{canonical} with respect to $(V_1, V_2, \dots, V_k)$ if
$|V_k\cap V(\cP)| = \lambda$ and the sizes of $V_1\cap V(\cP), V_2\cap V(\cP), \dots, V_{k-1}\cap V(\cP)$ are as equal as possible.




\begin{proposition}\label{pro:kl-edge}
Let $\eps>0$ and $0< b\le \frac{1}{k-1}$.
If an $n$-vertex $k$-partite $k$-graph $\cH$ has all parts of size at most $bn$ with $e(\cH)\ge \eps n^k$, then there exists a canonical $\ell$-path of length $\lambda s\ge \eps n$ in $\cH$.
\end{proposition}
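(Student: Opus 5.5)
Let $V_1,\dots,V_k$ be the parts of $\cH$. The plan is to find a large complete $k$-partite subgraph inside $\cH$ and then embed a canonical $\ell$-path in it using the colouring of Lemma~\ref{lem:color}.

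First, I will use the classical Erd\H{o}s supersaturation/K\H{o}v\'ari--S\'os--Tur\'an bound for $k$-partite $k$-graphs. A $k$-partite $k$-graph with $\eps n^k$ edges contains $K^{(k)}(t,\dots,t)$, with one part of size $t$ inside each $V_i$, as long as $t\le c(\log n)^{1/(k-1)}$. This size is too small, since we need a path of length linear in $n$. The step that actually yields linear length is therefore a greedy path-building argument in the spirit of Lemma~\ref{turan number}, not the complete subgraph itself.

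Concretely, I will repeatedly delete edges to make the hypergraph "robust." Call a $(k-1)$-set $S$ crossing $k-1$ of the parts \emph{weak} if $0<\deg(S)<\eps' n$, where $\eps'=\eps/(2k\cdot 2^k)$. Deleting all edges containing a weak $(k-1)$-set removes at most $k\binom{n}{k-1}\eps' n<\eps n^k/2$ edges. What remains is a nonempty subgraph $\cH'$ in which every $(k-1)$-set lying in an edge has at least $\eps' n$ extensions.

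In $\cH'$ I will build the canonical $\ell$-path greedily, vertex by vertex, following the order $x_1x_2\dots$ and the colouring of Lemma~\ref{lem:color}: vertex $x_j$ must be placed in $V_{c(x_j)}$. Start from an edge of $\cH'$ whose vertices sit in the right parts for $x_1,\dots,x_k$; any edge works after relabelling, since each edge meets each part exactly once. Each new edge $e_{i+1}$ shares $\ell$ vertices with $e_i$ and introduces $k-\ell$ new vertices. Its $k$ vertices have distinct colours, because the colouring is proper with each edge rainbow. To add the new vertices one at a time, I need the intermediate sets to be extendable. Degree $\ge\eps' n$ of $(k-1)$-sets only gives extension when $k-1$ vertices are already fixed. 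This is fine for $k-\ell=1$, but in general we must choose $k-\ell$ new vertices at once. To handle this, I will strengthen the cleaning to all levels: for each $j\ge \ell$, delete edges containing a crossing $j$-set whose (nonzero) degree is below $\eps' n^{k-j}$. This still costs only $O(\eps' n^k)$ edges.

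With this cleaning, every $\ell$-set $L$ in the current edge has degree at least $\eps' n^{k-\ell}$ in $\cH'$. Hence $L$ extends to at least $\eps' n^{k-\ell}-(\text{used})\cdot n^{k-\ell-1}$ edges avoiding the used vertices. Moreover, some of these extensions have the new vertices in the prescribed parts. The parts are forced anyway, since $L$ already occupies $\ell$ parts and an edge meets each part once; we only need the colour assignment to match, which is a relabelling fixed by the colouring.

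The process therefore continues while the number of used vertices is at most about $\eps' n$. This gives $\lambda s\ge \eps n$ edges after adjusting constants: take $\eps'$ in terms of $\eps$ and $k$, and truncate to a multiple of $s$. The bound $|V_i|\le bn$ ensures the canonical ratios are consistent with the part sizes, but in this greedy approach it is not really needed.

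The main obstacle is the multi-level cleaning. When edges are deleted to kill a weak $j$-set, other sets may become weak, so the deletion must be iterated. I will bound the total loss by charging each deletion to the weak set responsible at the moment it was deleted. Each $j$-set is charged fewer than $\eps' n^{k-j}$ edges in total, so the total loss is at most $\sum_j \binom{n}{j}\eps' n^{k-j}<\eps n^k/2$, and $\cH'$ remains nonempty.
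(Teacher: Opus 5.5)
\textbf{Same approach as the paper, with one real gap.} Your scheme matches the paper's. You delete the edges through low-degree $\ell$-sets, then grow the path greedily, each time adding an edge through the current $\ell$-end that avoids the vertices already used.

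Your handling of canonicity is correct and more explicit than the paper's. Every edge of a $k$-partite $k$-graph is rainbow, and so is every edge under the colouring of Lemma~\ref{lem:color}. So the $k-\ell$ new vertices automatically lie in the parts of the colours missing from the end, and you can order them to follow the colouring. Truncating to a length $\lambda s$ then leaves exactly $\lambda$ vertices in $V_k$, with the other classes balanced.

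The multi-level cleaning is harmless but unnecessary. You only ever use the degree of the $\ell$-end, since the $k-\ell$ new vertices are added at once as one edge.

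\textbf{The gap: you do not obtain $\lambda s\ge \eps n$.}
\begin{itemize}
\item With threshold $\eps'=\eps/(2k2^k)$ and blocked extensions bounded by $|U|\,n^{k-\ell-1}$, the greedy step is only guaranteed while $|U|<\eps' n$. That is about $\eps' n/(k-\ell)$ edges, a small constant fraction of $\eps n$.
\item You cannot ``adjust constants'' here. The same $\eps$ appears in the hypothesis $e(\cH)\ge\eps n^k$ and in the conclusion.
\item Your remark that the bound $|V_i|\le bn$ is not needed is a symptom of this. That hypothesis is exactly what the paper uses to recover the missing factor.
\end{itemize}

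\textbf{How the paper closes it.}
\begin{itemize}
\item It cleans only at level $\ell$, with threshold $\eps n^{k-\ell}$ itself. This deletes fewer than $\binom{n}{\ell}\eps n^{k-\ell}\le \eps n^k\le e(\cH)$ edges, so the cleaned graph is nonempty.
\item An edge through the end $S$ that meets a used vertex has its $k-\ell$ vertices outside $S$ in the $k-\ell$ parts missed by $S$. So there are at most $|U|(bn)^{k-\ell-1}$ such edges, not $|U|n^{k-\ell-1}$.
\item Hence the path extends while $|U|<\eps n\, b^{-(k-\ell-1)}$.
\item Since $(k-\ell)\nmid k$ forces $k-\ell\ge 2$, and $b\le \frac{1}{k-1}$, this threshold is at least $(k-1)^{k-\ell-1}\eps n$. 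That exceeds $(k-\ell)\eps n$ by a constant factor, so the length passes $\eps n+s$ for large $n$.
\item The one exception is the borderline case $k=3$, $\ell=1$. There the two sides agree to first order, and you need a slightly finer count that uses how the canonical colouring spreads $U$ over all parts.
\end{itemize}
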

\begin{proof}
If $\deg_{\cH}(S)<\eps n^{k-\ell}$ for an $\ell$-set $S\in \binom{V(\cH)}{\ell}$, then we remove all edges containing $S$.
We keep doing this process until $\deg(S)=0$ or $\deg(S)\ge \eps n^{k-\ell}$ for all $\ell$-sets $S$ in the present hypergraph.
Note that we have removed less than $\eps n^{k-\ell}\cdot (bn)^\ell=\eps b^\ell n^k$ edges from $\cH$, and the resulting $k$-graph $\cH'$ is non-empty since $e(\cH')>e(\cH)-\eps b^\ell n^k>0$.

Now we greedily find a canonical $\ell$-path in $\cH'$ using the degree condition.
Note that any $\ell$-path of length less than $\eps n+s$ can be extended to a longer path.
Indeed, suppose that we have an $\ell$-path $\cP=e_1e_2\cdots e_t$ of $\cH'$ with $t<\eps n+s$.
Let $X_t:=V(\cP)$ be the set of vertices used so far and thus $|X_t|= \ell+t(k-\ell)$. 
Take $S$ as the $\ell$-end of the path $\cP$ contained in $e_t$. 
Note that $\deg_{\cH'}(S)\ge \eps n^{k-\ell}$. 
To extend, we need to find another edge $e_{t+1}$ containing $S$ and $(e_{t+1}\setminus S)\cap X_t=\emptyset$.
The number of edges containing $S$ disjoint with $X_t\setminus S$  is at least 
\[\eps n^{k-\ell}-(\ell+t(k-\ell))\cdot (bn)^{k-\ell-1}> n^{k-\ell-1}\left(\eps n-(\ell + (\eps n+s) (k-\ell) )b^{k-\ell-1}\right)>0\]
for sufficiently large $n$ as $t< \eps n+s$ and $b\le \frac{1}{k-1}$.
Thus, there is an edge $e_{t+1}$ containing $S$ such that $(e_{t+1}\setminus S)\cap X_t=\emptyset$. 
Hence we obtain an $\ell$-path $\cP'=e_1e_2\cdots e_t e_{t+1}$ of $\cH'$ by adding $e_{t+1}$ to $\cP$.
Since this extension can be executed as long as $t<\eps n+s$, we can build a canonical $\ell$-path $\cP^*$ of length $\lambda s\ge \eps n$ in $\cH$.
\end{proof}


We use the following lemma to show that $(\eps,d)$-regular $k$-tuples $(V_1,V_2,\ldots,V_k)$ can be almost covered by a constant number of vertex-disjoint $\ell$-paths.

\begin{lemma}\label{lem:bal-paths}
    Fix $k\ge 3,1\le \ell<k$ such that $(k-\ell)\nmid k$, and $\eps,d>0$ such that $d\ge2\eps$.
    Let $m>\frac{2sk}{\eps^{k+2}}$.
    Suppose $\mathcal{V}:= (V_1, V_2, \dots, V_k)$ is an $(\eps, d)$-regular $k$-tuple with
    \[|V_1|=\cdots= |V_{k-1}|=(sk-s\ell-1)m \text{ and } |V_k|=(k-1)m.\]
    Then there exists a family of at most $\frac{sk}{\eps^{k+1}}$ vertex-disjoint canonical $\ell$-paths that together cover all but at most $2sk^2\eps m$ vertices of $\mathcal V$.
\end{lemma}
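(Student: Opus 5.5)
We proceed greedily: we repeatedly find a long canonical $\ell$-path inside the part of $\mathcal V$ not yet covered, using Proposition~\ref{pro:kl-edge}, and we stop once too few vertices remain for regularity to apply.

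The basic step is as follows. Suppose we have already found vertex-disjoint canonical $\ell$-paths $\cP_1,\dots,\cP_j$, and let $U_i\subseteq V_i$ be the set of uncovered vertices. Each canonical path of length $\lambda s$ uses exactly $\lambda$ vertices of $V_k$. It uses $\lambda(s(k-\ell)-1)+\ell$ vertices split as evenly as possible among $V_1,\dots,V_{k-1}$. In the ratio $|V_i|:|V_k|=(sk-s\ell-1):(k-1)$, vertices are therefore removed from $V_1,\dots,V_{k-1}$ and $V_k$ in almost exactly the proportions of the original sizes. The deviation per path is at most $k$ vertices, from the rounding and the additive $\ell$.

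So, as long as $|U_k|\ge \eps|V_k|$ (say), every $U_i$ satisfies $|U_i|\ge \eps|V_i|-jk$. With $j\le sk/\eps^{k+1}$ and $m$ large this is at least, say, $\eps|V_i|/2$; we may take all $|U_i|\ge\eps|V_i|$ after adjusting constants slightly. Regularity with $d\ge 2\eps$ then gives
\[e(U_1,\dots,U_k)\ge (d-\eps)\prod|U_i|\ge \eps\prod|U_i|.\]
The $k$-partite graph $\cH[U_1,\dots,U_k]$ on $N=\sum|U_i|$ vertices therefore has at least $\eps\cdot\eps^k\prod|V_i|/N^k\cdot N^k$ edges, that is, at least $c N^k$ edges with $c\approx \eps^{k+1}/k^k$-ish.

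Each part has size at most $N/(k-1)$ up to small error. Indeed, $|V_i|/\sum|V_j|\le 1/(k-1)$ because $(k-1)(sk-s\ell-1)+(k-1)\ge \ldots$; I would check that $sk-s\ell-1\ge k-1$, which is implied by $s(k-\ell)\ge k$. Thus Proposition~\ref{pro:kl-edge} applies with $b=1/(k-1)$ and produces a canonical $\ell$-path of length at least $cN$ with respect to $(U_1,\dots,U_k)$. Note that the canonical orientation puts the $\lambda$-colored class in $U_k$, as required.

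Counting the paths: each path covers at least $\Omega(\eps^{k+1} m)$ vertices, namely roughly $\eps^{k+1}$ times the total $\sum|V_i|\le skm$. Hence after at most $sk/\eps^{k+1}$ iterations the process must stop. It stops only when some $|U_i|<\eps|V_i|$. Because the removal is proportional up to $k$ per path, at that moment every $|U_i|\le \eps|V_i|+k\cdot sk/\eps^{k+1}$. The condition $m>2sk/\eps^{k+2}$ makes the error term at most $\eps m$. Therefore the uncovered total is at most $\sum_i(\eps|V_i|+\eps m)\le 2sk^2\eps m$.

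The main obstacle is the bookkeeping of the balance. We must show that canonical paths consume the parts in exactly the ratio $(sk-s\ell-1):(k-1)$ up to $O(k)$ error per path, so that no part runs out prematurely. This requires the per-path count $\lambda(s(k-\ell)-1)+\ell$ for the first $k-1$ classes, read off from the coloring in Lemma~\ref{lem:color}. We must also make the constants fit: the per-path length must be an exact multiple $\lambda s$, and we truncate the path from Proposition~\ref{pro:kl-edge} to such a length, losing at most $s$ edges.
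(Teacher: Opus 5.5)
Your proposal is correct and follows essentially the same route as the paper. You greedily extract canonical $\ell$-paths from the uncovered sets $U_1,\dots,U_k$ via Proposition~\ref{pro:kl-edge}, and use $(\eps,d)$-regularity with $d\ge 2\eps$ to guarantee density while every $|U_i|\ge\eps|V_i|$. You rely on the canonical vertex distribution to keep the parts in ratio $(sk-s\ell-1):(k-1)$ up to $O(1)$ per path, bound the number of paths by their minimum length, and let $m>2sk/\eps^{k+2}$ absorb the accumulated imbalance. The differences are cosmetic: you stop once some $|U_i|<\eps|V_i|$ rather than once $|U_k|<2\eps|V_k|$, and you explicitly check the part-size hypothesis $b\le 1/(k-1)$ of Proposition~\ref{pro:kl-edge}, which the paper leaves implicit.
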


\begin{proof}
We greedily find vertex-disjoint canonical $\ell$-paths by Proposition~\ref{pro:kl-edge} in $\mathcal{V}$ until less than $2\eps |V_k|$ vertices are uncovered in $V_k$.
Suppose that we have found $q$ vertex-disjoint canonical $\ell$-paths $\cP_1,\cP_2,\ldots,\cP_q$ for some $q>0$, each of length $p_i\ge \eps^{k+1} m$ with $p_i\in s \mathbb N$.
For $i\in [k]$, 
let $U_i$ be the set of uncovered vertices in $V_i$
and suppose that $|U_k|\ge 2\eps |V_k|$.
As each path $\cP_i$ is canonical, we have that $|V(\cP_i)\cap V_k| = p_i/s$ and for $j\in [k-1]$, $|V(\cP_i)\cap V_j| \le \left\lceil\frac{\ell+p_i(k-\ell)-p_i/s}{k-1}\right\rceil\le \frac{p_i}{s} \frac{sk-s\ell-1}{k-1}+2$.
Because $|V_k| - |U_k|=\sum_{i\in [q]}|V(\cP_i)\cap V_k|=\sum_{i\in [q]}p_i/s \le (1-2\eps)(k-1)m$, we have for $j\in [k-1]$,
\[
\sum_{i\in [q]}|V(\cP_i)\cap V_j|\le 2q+\sum_{i\in [q]} \frac{p_i}{s} \frac{sk-s\ell-1}{k-1}\le (1-2\eps)|V_j|+2q.
\]
As $|V_k\cap V(\cP_j)|\ge {\eps^{k+1} m}/{s}$ for $j\in[q]$, we derive $q\le \frac{(k-1)m}{{\eps^{k+1} m}/{s}}\le \frac{sk}{\eps^{k+1}}<\eps m/2$.
Therefore, we have for $j\in [k-1]$, $|U_i|\ge 2\eps |V_j|-2q\ge \eps |V_j|$.
Now we consider a $k$-partite subhypergraph $\mathcal{V'}$ with partition classes $U_1,U_2,\ldots,U_k$. 
By regularity, $e(\mathcal{V'})\ge (d-\eps)\prod_{i=1}^k|U_i|\ge (d-\eps)\eps^k m^k\ge \eps^{k+1}m^k$, so we can apply Proposition~\ref{pro:kl-edge} and find a canonical $\ell$-path of length $p_{q+1}\ge \eps^{k+1} m$.
Let $\cP_1,\cP_2,\ldots,\cP_{q'}$ denote the canonical $\ell$-paths obtained in $\mathcal{V}$ after the iteration stops.
Note that as each path $\cP_i$ is canonical, we have that $|V(\cP_i)\cap V_k| = p_i/s$ and for $j\in [k-1]$, $|V(\cP_i)\cap V_j| \ge \left\lfloor\frac{\ell+p_i(k-\ell)-p_i/s}{k-1}\right\rfloor \ge \frac{p_i}{s} \frac{sk-s\ell-1}{k-1}-1$.
Because $\sum_{i\in [q']}|V(\cP_i)\cap V_k|=\sum_{i\in [q']}p_i/s \ge (1-2\eps)(k-1)m$, we have for $j\in [k-1]$,
\[
\sum_{i\in [q']}|V(\cP_i)\cap V_j|\ge -q'+\sum_{i\in [q']} \frac{p_i}{s} \frac{sk-s\ell-1}{k-1}\ge (1-2\eps)|V_j|-q'.
\]
Moreover, as $|V_k\cap V(\cP_i)|\ge {\eps^{k+1} m}/{s}$ for $i\in[q']$, we derive $q'\le \frac{(k-1)m}{{\eps^{k+1} m}/{s}}\le \frac{sk}{\eps^{k+1}}$.
Since $m>\frac{2sk}{\eps^{k+2}}$, we have $(k-1)q'<\frac{sk^2}{\eps^{k+1}}<sk^2\eps m$.
Therefore, the total number of uncovered vertices in $\mathcal{V}$ is at most $2\eps |V_k|+\sum_{j=1}^{k-1}(2\eps |V_j|+q')=(sk-s\ell)(k-1)2\eps m+(k-1)q'\le 3sk^2\eps m$.
\end{proof}

\subsection{Proof of Theorem~\ref{thm:nonextremal}}
For the non-extremal case in~\cite{HZ15}, Han and Zhao used the absorbing method pioneered by R\"odl, Ruci\'nski and Szemer\'edi~\cite{RRS2006}.
In fact, the absorbing lemma and the reservoir lemma used in~\cite{HZ15} are due to K\"uhn, Mycroft and Osthus~\cite[Lemmas 6.3 and 8.1]{KMO10} which in fact hold for all $k\ge 3$ and $1\le \ell <k$.
Therefore, to prove Theorem~\ref{thm:nonextremal}, it suffices to 
show that the following result, which extends the path-cover lemma in~\cite[Lemma 2.3]{HZ15}.
\begin{lemma}\emph{(Path-cover lemma)}\label{lem:path cover}
    For all integers $k \geq 3$, $1\le \ell <k$ such that $(k-\ell)\nmid k$, and every $\alpha, \gamma > 0$, there exist integers $p$ and $n_0$ such that the following holds. Let $\cH$ be a $k$-graph on $n > n_0$ vertices with $\delta_{k-1}(\cH) \geq \left( \frac{1}{s(k - \ell)} - \gamma\right) n,$
    then there is a family of at most $p$ vertex-disjoint $\ell$-paths that together cover all but at most $\alpha n$ vertices of $\cH$, or $\cH$ is $12k^3\gamma$-extremal.
\end{lemma}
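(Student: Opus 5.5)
We plan to follow the standard regularity-plus-tiling route. Set $K:=K^{(k)}(a,b,\dots,b)$ with $a=k-1$ and $b=sk-s\ell-1$, so that $m=a+(k-1)b=(k-1)(sk-s\ell)$ and $a/m=\frac{1}{s(k-\ell)}$. Since $(k-\ell)\nmid k$, we have $s(k-\ell)>k$, hence $b\ge k>a$. Given $\alpha,\gamma$, choose constants in the hierarchy $1/n_0\ll 1/T_0\ll 1/t_0,\eps\ll d,\beta\ll\alpha,\gamma$, with $d,\sqrt\eps\le\gamma/4$. Apply Proposition~\ref{cor:re-partition} with $c=\frac{1}{s(k-\ell)}-\gamma$ to get an $\eps$-regular partition $V_0\cup V_1\cup\dots\cup V_t$ and cluster hypergraph $\cR=\cR(\eps,d)$. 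In $\cR$, all but $\sqrt\eps t^{k-1}$ of the $(k-1)$-sets have degree at least $(\frac{1}{s(k-\ell)}-2\gamma)t$ (absorbing the $-(k-1)$ term since $t\ge t_0$ is large).

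Next we apply Lemma~\ref{lem:GHZ-2} to $\cR$ with $\gamma_3:=2\gamma$ and extremality parameter $\Delta'$ chosen so that $5bk^2\gamma_3\le\Delta'$. Since $b<sk\le k^2$ roughly, $\Delta'$ of order $10k^3\gamma$ suffices. This gives one of two outcomes.

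\emph{Extremal outcome.} $\cR$ is $\Delta'$-extremal, with a set $B_\cR$ of $\lfloor(1-\frac{1}{s(k-\ell)})t\rfloor$ clusters and $e_\cR(B_\cR)\le\Delta' t^k$. We transfer this to $\cH$: let $B$ be the union of these clusters, adjusted by adding or removing $O(\eps n+n/t)$ vertices to reach the exact size $\lfloor(1-\frac{1}{s(k-\ell)})n\rfloor$. We then count edges of $\cH$ inside $B$:
\begin{itemize}
\item edges meeting $V_0$ or the adjustment: at most $2\eps n^k$;
\item edges with two vertices in the same cluster: at most $n^k/t$;
\item edges on irregular $k$-tuples: at most $\eps n^k$;
\item edges on tuples of density below $d$: at most $d n^k$;
\item edges on edges of $\cR$ inside $B_\cR$: at most $\Delta' t^k (n/t)^k=\Delta' n^k$.
\end{itemize}
Altogether $e(B)\le(\Delta'+2d)n^k\le 12k^3\gamma n^k$, so $\cH$ is $12k^3\gamma$-extremal. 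The main obstacle is this constant bookkeeping: fixing $\Delta'$ compatible with the hypothesis of Lemma~\ref{lem:GHZ-2} while still landing under $12k^3\gamma$. If the literal inequality is tight, we will shrink $\gamma_3$ by using the slack in $d,\eps\ll\gamma$.

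\emph{Tiling outcome.} $\cR$ has a $K$-tiling covering all but $\beta t$ clusters. For each copy of $K$, with vertex classes $A$ ($|A|=k-1$) and $B_1,\dots,B_{k-1}$ ($|B_i|=b$), every crossing $k$-tuple is an edge of $\cR$, hence $(\eps,d)$-regular. We split each cluster of size $N=|V_1|$ into equal pieces so as to form $k-1$ regular $k$-tuples of the shape required by Lemma~\ref{lem:bal-paths}. To do so, divide each cluster in $A$ into $b$ parts and each cluster in $B_i$ into $k-1$ parts, each of size roughly $N/(b(k-1))$, discarding at most a constant number of vertices per cluster. Then group the pieces into $k$-tuples consisting of $k-1$ sides built from pieces of $B$-clusters, of total size $(sk-s\ell-1)\mu$, and one side of size $(k-1)\mu$ from $A$-pieces; the sides are unions of pieces from distinct clusters across a crossing structure. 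It is cleaner to pick the sides as single pieces in appropriate proportions. Concretely, we take $\mu=N/(b(k-1))$, so that the tuple $(V_{j_1}',\dots,V_{j_{k-1}}',U)$ with $V_{j_i}'$ a $1/(k-1)$-fraction of a $B_i$-cluster has size $b\mu$ and $U$ is a $1/b$-fraction of an $A$-cluster of size $(k-1)\mu$. The counts match: $(k-1)\cdot b$ pieces on the $B$-side per $A$-cluster position. Regularity is inherited with parameter $\eps'=\eps\cdot b(k-1)$ by the remark after the definition of regularity.

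Apply Lemma~\ref{lem:bal-paths} (with $\eps'$, and $d\ge2\eps'$) to each such tuple. This covers all but a $3sk^2\eps'$-fraction of its vertices using at most $sk/\eps'^{k+1}$ paths. Summing over at most $T_0$ clusters, the number of paths is bounded by $p:=T_0\,sk/\eps'^{k+1}$. The uncovered vertices number at most $|V_0|+\beta n+O(\eps' n)+O(T_0)\le\alpha n$.
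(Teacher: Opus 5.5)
Your plan is essentially the paper's proof: Proposition~\ref{cor:re-partition}, then Lemma~\ref{lem:GHZ-2} on $\cR$, then transfer of extremality back to $\cH$. In the tiling case you cut each tiled copy into $(k-1)b$ regular $k$-tuples of sizes $b\mu,\dots,b\mu,(k-1)\mu$ and apply Lemma~\ref{lem:bal-paths}, exactly as the paper does. The paper tiles with the spanning subgraph $\mathcal F_{k,\ell}$ where you tile with $K^{(k)}(k-1,b,\dots,b)$ itself; this changes nothing, and your version is the form to which Lemma~\ref{lem:GHZ-2} literally applies.

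The one step that fails as written is the constant you flagged. Your estimate $b<sk\le k^2$ gives nothing like $10k^3\gamma$. Moreover, with $\gamma_3=2\gamma$ the requirement $\Delta'\ge 5bk^2\gamma_3=10bk^2\gamma$ can exceed $12k^3\gamma$ outright. For $k=4$, $\ell=1$ we have $s=2$, $b=5$, and $10bk^2=800>768=12k^3$.

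Your fallback is the right repair once you use the correct bound on $b$. Since $(s-1)(k-\ell)<k$, we get $s(k-\ell)\le 2k-\ell-1$, hence $b\le 2k-3$. The degree loss in $\cR$ is only $(\gamma+d+\sqrt{\eps}+(k-1)/t)t$, and your hierarchy has $d,\sqrt{\eps},k/t_0\ll\gamma$. So you may take $\gamma_3=(1+1/k)\gamma$. Then $\Delta':=5bk^2\gamma_3\le 5(2k-3)(1+1/k)k^2\gamma<10k^3\gamma$, and therefore $e(B)\le(\Delta'+2d)n^k<12k^3\gamma n^k$.

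Choose the constants in this order to avoid circularity:
\begin{itemize}
\item first fix $\gamma_3$ and $\Delta'$;
\item then take the constants $\varepsilon$ and $n_0$ that Lemma~\ref{lem:GHZ-2} returns for $(\beta,\Delta')$;
\item only then choose $d$, the regularity parameter (with its square root below that tolerance), and $t_0$.
\end{itemize}

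For comparison, the paper takes $d=\gamma/2$, $\gamma_3=2\gamma$ and $\Delta=10k^3\gamma$. That choice tacitly needs $b\le k$, which is false for example when $k=4,\ell=1$ or $k=7,\ell=4$. Your insistence on $d\ll\gamma$ is exactly what lets the bookkeeping close.
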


The proof of our Theorem~\ref{thm:nonextremal}, using our Lemma~\ref{lem:path cover} above, is identical to that of the non-extremal case \cite[Theorem~1.4]{HZ15}, and is thus omitted.



\begin{proof}
[Proof of Lemma~\ref{lem:path cover}]
Fix integers $k\ge 3,1\le \ell<k$ with $(k-\ell)\nmid k$
and $0<\alpha,\gamma<1$.
Let $\Delta=10k^3\gamma$ and $\beta=\alpha/2$.
Let $T_0$ be the constant returned by applying Proposition~\ref{cor:re-partition} with $c=\frac{1}{s(k-\ell)}-\gamma$, $d=\gamma/2, \eps\le \min \{\frac{\gamma^2}{16}$, 
$\frac{\alpha}{3s^4k}\}$ and $t_0>4k/\gamma$.
We let $p=\frac{sT_0}{k^k\eps^{k+1}}$.

Let $n$ be sufficiently large.
Applying Proposition~\ref{cor:re-partition} to $\cH$ with the constants chosen above, we obtain an $\eps$-regular partition $V=V_0\cup V_1\cup \cdots \cup V_t$ such that the partition classes satisfy 
\[
|V_1|=|V_2|=\cdots=|V_t|=(sk-s\ell-1)(k-1)m=:N
\]
for some positive integer $m$ by possibly moving at most $t(sk-s\ell-1)(k-1)\le \eps n$ vertices to $V_0$ and then $(1-2\eps)\frac{n}{t}\leq N\leq \frac{n}{t}$. 
Moreover, we also obtain a cluster hypergraph $\cR=\cR(\eps,d)$ on $t\ge t_0$ vertices such that for all but at most $\sqrt{\eps}t^{k-1}$ $(k-1)$-sets $S\subseteq V(\cR)$ satisfy
\[
\deg_\cR(S)\geq \left(\frac{1}{s(k-\ell)}-\gamma-d-\sqrt{\eps}\right)t-(k-1)\geq \left(\frac{1}{s(k-\ell)}-2\gamma\right)t,
\]
where the second inequality follows from $d=\gamma/2, \sqrt{\eps}\le\gamma/4$ and $k-1<\gamma t_0/4\le \gamma t/4$.
Let $\mathcal{F}_{k,\ell}$ be the $k$-graph whose vertex set is the disjoint union of sets $A_1, \dots, A_{sk-s\ell-1}$ and $B$ of size $k-1$ and whose edges are all the $k$-sets of the form $A_i \cup \{b\}$ (for all $i\in [s(k-\ell)-1]$ and all $b \in B$). 
Applying Lemma~\ref{lem:GHZ-2} to $\cR$ with the constants chosen above and
$\gamma_3=2\gamma,~K:=\mathcal{F}_{k,\ell}$,
we derive that either $\cR$ has a $\mathcal{F}_{k,\ell}$-tiling $\mathscr{F}$ that covers all but at most $\beta t$ vertices or $\cR$ is $\Delta$-extremal.

In the latter case,
there exists a set $B\subseteq V(\cR)$ such that $|B|=\left\lfloor \left(1-\frac{1}{s(k - \ell)}\right) t \right\rfloor$ and $e_{\cR}(B)\le \Delta t^k$.
Let $B'\subseteq V(\cH)$ be the union of the clusters in $B$. 
By regularity,
\begin{eqnarray}\label{eqn:edges}
e_{\cH}(B')\leq e_\cR(B)\cdot N^k+\binom{t}{k}\cdot d\cdot N^k+\eps \binom{t}{k}\cdot N^k+t\binom{N}{2}\binom{n-2}{k-2},  
\end{eqnarray}
where the right-hand side bounds the number of edges from regular $k$-tuples with high density, edges from regular $k$-tuples with low density, edges from irregular $k$-tuples and edges that lie in at most $k-1$ clusters.
Since $\eps<\gamma/16$ and $t\ge t_0>4k/\gamma$,
\eqref{eqn:edges} is at most 
\[
10k^3\gamma t^k\cdot\left(\frac{n}{t}\right)^k+\binom{t}{k}\cdot\frac{\gamma}{2}\cdot\left(\frac{n}{t}\right)^k+\frac{\gamma}{16}\cdot\binom{t}{k}\cdot\left(\frac{n}{t}\right)^k+t\binom{n/t}{2}\binom{n-2}{k-2}<11k^3\gamma n^k.
\]
Note that 
$|B'| = \left\lfloor \left(1-\frac{1}{s(k - \ell)}\right) t \right\rfloor N 
\le \left(1-\frac{1}{s(k - \ell)}\right) t \cdot \frac{n}{t} 
= \left(1-\frac{1}{s(k - \ell)}\right) n,$
and consequently 
$|B'| \le \left\lfloor \left(1-\frac{1}{s(k - \ell)}\right) n \right\rfloor.$
On the other hand,
\begin{align*}
|B'| &=  \left\lfloor \left(1-\frac{1}{s(k - \ell)}\right) t \right\rfloor N \ge \left(\left(1-\frac{1}{s(k - \ell)}\right)t-1 \right)\cdot(1-2\eps)\frac{n}{t} \\
&= \left( \left(1-\frac{1}{s(k - \ell)}\right)t -2\eps t +2\eps \frac{1}{s(k - \ell)}t-1 \right)\frac{n}{t} \ge \left(  \left(1-\frac{1}{s(k - \ell)}\right)t - 2\eps t \right)\frac{n}{t} \\
&> \left(1-\frac{1}{s(k - \ell)}\right) n - 2\eps n.
\end{align*}
By adding at most $2\eps n$ vertices from $V \setminus B'$ to $B'$, we obtain a set $B'' \subseteq V(\mathcal{H})$ of size exactly $\left\lfloor \left(1-\frac{1}{s(k - \ell)}\right) n \right\rfloor$
with 
$e(B'') \le e(B') + 2\eps n \cdot n^{k-1} < 12k^3\gamma n^k.$
Hence $\mathcal{H}$ is $12k^3\gamma$-extremal.

In the former case, 
the union of the clusters covered by $\mathscr{F}$ contains all but at most $\beta tN + |V_0|\le \alpha n/2+2\eps n $ vertices of $\mathcal{H}$.
Let $\mathcal{F}$ be an arbitrary copy $\mathcal{F}_{k,\ell}$ in the $\mathcal{F}_{k,\ell}$-tiling $\mathscr{F}$ of $\cR$ with the vertex set $V(\mathcal{F})$ grouped into sets $B,A_1,\ldots,A_{s(k-\ell)-1}$, all of the same size $k-1$.
Clearly, $|V(\mathcal{F})|=s(k-\ell)(k-1)$.
The edges of $\mathcal{F}$ are the sets $\{b\}\cup A_i$ with $i\in [s(k-\ell)-1]$ and $b\in B$.
Now we let $W_1,\ldots,W_{k-1}$ denote the corresponding clusters of $k-1$ vertices of $B$ in $\cH$ and let $W_k,\ldots,W_{s(k-\ell)(k-1)}$ denote the corresponding clusters of vertices of $A_1,\ldots,A_{s(k-\ell)-1}$ in $\cH$.
Now we split each $W_i$, $i\in [k-1]$, into $s(k-\ell)-1$ disjoint sets $W^1_i, \ldots, W^{s(k-\ell)-1}_i$ of equal size and split each $W_j$, $j\in \{k,\dots, s(k-\ell)(k-1)\}$, into $k-1$ disjoint sets $W^1_j,\ldots,W^{k-1}_j$ of equal size.
Let $\eps_\ell:=(sk-s\ell-1)\eps$.
Then each of the $k$-tuples $\left(W_i^j,W_{j(k-1)+1}^i,W_{j(k-1)+2}^i,\ldots,W_{(j+1)(k-1)}^i\right)$ with $i\in [k-1]$, $j\in [s(k-\ell)-1]$
is $(\eps_\ell,d)$-regular and of sizes $(k-1)m, (sk-s\ell-1)m, (sk-s\ell-1)m, \ldots, (sk-s\ell-1)m$.
Applying Lemma~\ref{lem:bal-paths} to these $(sk-s\ell-1)(k-1)$ $k$-tuples, we find a family of at most 
$\frac{sk}{\eps_\ell^{k+1}}$
vertex-disjoint canonical $\ell$-paths in each $k$-tuple covering all but at most $3s^2k\eps_\ell m$ vertices.
Since $|\mathscr{F}|\le \frac{t}{s(k-\ell)(k-1)}$, we thus obtain a path-tiling that consists of at most 
\[\frac{t}{s(k-\ell)(k-1)}\cdot (sk-s\ell-1)(k-1)\cdot\frac{sk}{\eps_\ell^{k+1}}\le \frac{skt}{(sk-s\ell-1)^{k+1}\eps^{k+1}}\le \frac{sT_0}{k^k\eps^{k+1}}=
p\] 
$\ell$-paths whose union covers all but at most $
\frac{t}{s(k-\ell)(k-1)}\cdot (sk-s\ell-1)(k-1)\cdot 3s^2k\eps_\ell m+
\alpha n/2 + 2\eps n\le (3s^2k+2)\eps n +\alpha n/2
\le \alpha n$ vertices of $\cH$ where $\eps_\ell mt\le \eps Nt\le\eps n$
and 
$\eps\le \frac{\alpha}{3s^4k}$.
This completes the proof.
\end{proof}



\section{Proof of Theorem~\ref{thm:extremal}}\label{sec:3}
In this section we provide the proof for the extremal case.
Indeed, under our approach (and similar to previous proofs), there is only one place that the exact minimum co-degree condition is needed, where we use the degree condition to find a small collection of vertex-disjoint $\ell$-paths with certain constraints.
We then write our proof in a unified way: we can build a Hamilton $\ell$-cycle if we assume the existence of such collection of paths (Theorem~\ref{thm:main result}), and then show how to build such collection of paths under either of our minimum co-degree conditions (Lemma~\ref{lem:sq paths} and Lemma~\ref{lem:sq}).

Recall that $s:=\lceil \frac{k}{k-\ell}\rceil$. 
Suppose $k\geq 3$ 
and $0<\Delta \ll 1$. 
Let $n\in (k-\ell)\mathbb{N}$ be sufficiently large and $\cH=(V,E)$ be a $k$-graph with $n$ vertices such that 
$\delta_{k-1}(\cH)\geq \frac{n}{s(k-\ell)}$ for a moment.

Assume that $\cH$ is $\Delta$-extremal, i.e., there exists a set $B\subseteq V$ such that $|B| = \left\lfloor \left(1-\frac{1}{s(k - \ell)}\right) n \right\rfloor$ and $e(B)\leq \Delta n^{k}$. Let $A=V\setminus B$, and then $|A|=\lceil \frac{n}{s(k-\ell)}\rceil$.
For the convenience of later calculations, we let $\varepsilon_{0}=2k!\Delta\left(1-\frac{1}{s(k-\ell)}\right)^{-k} \ll1$. 
Indeed, we have
\begin{eqnarray}\label{i1}
\begin{aligned}
e(B)\leq \Delta n^{k}= \varepsilon_{0}\left(1-\frac{1}{s(k-\ell)}\right)^{k}\frac{n^{k}}{2k!}\leq \varepsilon_{0}\binom{|B|}{k}.
\end{aligned}
\end{eqnarray}

Now let us include some additional notation for this section.
Given a $k$-graph $\cH=(V,E)$ and two vertex sets $X,Z\subseteq V$, $|X|<k$, let $\deg(X, Z)$ be the number of $(k-|X|)$-sets $Y\subseteq Z$, the \emph{neighbor} of $X$, satisfying $X\cup Y\in E$. 
The number of \emph{non-edges} on $X\cup Z$ containing $X$ is defined as $\overline{\deg}(X,Z)=\binom{|Z\setminus X|}{k-|X|}-\deg(X, Z)$.
Given two disjoint vertex sets $X,Y$ and two integers $i,j\geq 0$, we say that a set $S\subseteq X\cup Y$ is an $X^{i}Y^{j}$-set if $|S\cap X|=i$ and $|S\cap Y|=j$. 
In particular, when two disjoint vertex sets $X,Y\subseteq V$ and $i+j=k$, we define $\cH(X^{i}Y^{j})$ to be the family of all edges of $\cH$ that are $X^{i}Y^{j}$-sets and $e(X^{i}Y^{j})=|\cH(X^{i}Y^{j})|$.
The number of non-edges among $X^{i}Y^{k-i}$-sets is denoted by $\overline{e}(X^{i}Y^{k-i})$. 
In addition, given a set $L\subseteq X\cup Y$ with $|L\cap X|=k_1\leq i$ and $|L\cap Y|=k_2\leq k-i$, denote by $\deg(L,X^{i}Y^{k-i})$ the number of edges in $\cH(X^{i}Y^{k-i})$ that contain $L$, and
$\overline{\deg}(L,X^{i}Y^{k-i})=\binom{|X|-k_1}{i-k_1}\binom{|Y|-k_2}{k-i-k_2}-\deg(L,X^{i}Y^{k-i})$. 
For a vertex $v\in V$, the \emph{link hypergraph} of $v$ in $\cH$ is the $(k-1)$-graph $\cL_v=(V\setminus\{v\},E_v)$ where $E_v=\{e\setminus\{v\}:e\in E,v\in e\}$.

\subsection{Preparation} 
Let $\varepsilon_{1}=\varepsilon_{0}^{1/4}$and $\varepsilon_{2}=2\varepsilon_{1}^{2}$. 
Suppose that the partition $V=A\cup B$ satisfies that $|B| = \left\lfloor \left(1-\frac{1}{s(k - \ell)}\right) n \right\rfloor$ and~\eqref{i1}. In addition, assume that $e(B)$ is the smallest among all such partitions. We now classify the vertices further by defining
\begin{eqnarray}\label{eqn:vxt part}
\begin{aligned}
A':&=\left\{v\in V:\deg(v,B)\geq (1-\varepsilon_{1})\binom{|B|}{k-1}\right\},\\
B':&=\left\{v\in V:\deg(v,B)\leq \varepsilon_{1}\binom{|B|}{k-1}\right\},\\
V_{0}:&=V\setminus(A'\cup B').
\end{aligned}
\end{eqnarray}

Moreover, one important step is to connect several short $\ell$-paths to a longer path. 
To make this easy, we classify the $\ell$-sets in $B'$: they are good to us if they have high degree in the $k$-graph $\cH[A'(B')^{k-1}]$. 
Formally, for $0\leq \ell'\leq \ell$, an $\ell'$-set $L'\subseteq V$ is $\varepsilon_{1}$-\emph{bad} if $\deg(L',B)>\varepsilon_{1} \binom{|B|}{k-\ell'}$, otherwise $\varepsilon_{1}$-\emph{good}. 
Then we define that an ordered $\ell$-set ${\bf L}=(v_1, v_2, \dots, v_{\ell})$ is \emph{linkable} 
if all the ordered $(\ell-(i-1)(k-\ell))$-sets $(v_{(i-1)(k-\ell)+1}, v_{(i-1)(k-\ell)+2}, \dots , v_{\ell})$ are $\eps_1$-good for $i\in [s-1]$,
otherwise \emph{non-linkable}.
In particular, we often write $L=\{v_1,v_2,\ldots,v_\ell\}$ for the $\ell$-set of ${\bf L}$ in this paper.

Now we are ready to state our technical result, which says that we can construct a Hamilton $\ell$-cycle given the existence of certain collection of vertex-disjoint $\ell$-paths.



\begin{theorem}\label{thm:main result}
Let integer $k>\ell\ge 2$ such that $(k-\ell)\nmid k$ and $s_*:=s_*(k)$ be a function of $k$.
Let $\cH$ be an $n$-vertex $k$-graph such that $n\in (k-\ell)\mathbb{N}$ is sufficiently large.
Suppose $A',B',V_0$ is defined as in~\eqref{eqn:vxt part}. 
Let $q=|A\cap B'|$.
If $\cH$ has $sq$ vertex-disjoint $\ell$-paths $P_1,P_2,\ldots,P_{sq}$, each of length at most $s_*$ and each containing two linkable $\ell$-ends, then $\cH$ contains a Hamilton $\ell$-cycle.
\end{theorem}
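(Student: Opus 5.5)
We plan to follow the extremal-case strategy of Han and Zhao~\cite{HZ15}, in two stages: first assemble all the given paths, together with the vertices of $V_0$, into one short $\ell$-path $\cP$ with linkable ends; then extend $\cP$ to a Hamilton $\ell$-cycle using the near-complete bipartite-type structure between $A'$ and $B'$.

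\textbf{Step 1: basic estimates.} From~\eqref{i1}, the minimality of $e(B)$ and $\delta_{k-1}(\cH)\ge \frac{n}{s(k-\ell)}$, the standard counting arguments give $|A\setminus A'|,|B\setminus B'|\le \varepsilon_2 n$ and $|V_0|\le \varepsilon_2 n$. They also give that all but few $(k-1)$-subsets of $B'$ have almost all of $A'$ in their neighbourhood. In addition, every vertex of $V_0$ lies in many edges of type $A'(B')^{k-1}$ or of the form $\{v\}\cup(B')^{k-1}$ and can be absorbed.

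\textbf{Step 2: build $\cP$.} Using the high degree of $\varepsilon_1$-good sets into $A'$, we connect the paths $P_1,\dots,P_{sq}$ one after another through short connecting $\ell$-paths of the canonical shape. Here the linkability of the ends is exactly what lets us step from an end $\mathbf L$ into fresh $A'$-vertices and $B'$-vertices: the nested good $(\ell-(i-1)(k-\ell))$-sets guarantee the first $s-1$ connecting edges. Each vertex of $V_0$ is inserted with $O(1)$ extra vertices. The outcome is a single path $\cP$ of length $O(q+|V_0|)=o(n)$ with two linkable ends. The role of the $sq$ paths is to correct the ratio: each $P_i$ consumes surplus $B'$-vertices relative to $A'$. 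The aim is that
\[
|B'\setminus V(\cP)| = (s(k-\ell)-1)\,|A'\setminus V(\cP)| + c
\]
holds for a suitable constant $c$ determined by the end-structure, which is the ratio needed to tile the rest by the pattern of Lemma~\ref{lem:color}, with one $A'$-vertex per $s$ edges. If necessary, small adjustments are made using edges that meet $B'$ in $k-1$ vertices.

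\textbf{Step 3: extend $\cP$ to a Hamilton cycle.} This is the main obstacle. Let $A''=A'\setminus V(\cP)$ and $B''=B'\setminus V(\cP)$. We must cover $A''\cup B''$ by an $\ell$-path whose repeating unit has $s(k-\ell)-1$ vertices in $B''$ and one in $A''$, so that every edge contains exactly one $A''$-vertex, and we must join its two ends to the ends of $\cP$. The plan is to fix the abstract ordering pattern and to map the $B''$-positions by a uniformly random bijection onto $B''$ and the $A''$-positions onto $A''$. A bad event is a $(k-1)$-set of consecutive $B''$-vertices together with its assigned $A''$-vertex failing to form an edge. Each such event is rare because almost all pairs of a $(k-1)$-set in $B'$ and a vertex in $A'$ form edges, and each event depends on only $O(1)$ positions. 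The Lov\'asz Local Lemma for random injections of Lu and Sz\'ekely~\cite{LS2007} then yields an assignment with no bad event.

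The delicate point is the very few vertices of $A'$, or $(k-1)$-sets of $B'$, with somewhat low degree, for which the events are not rare enough. We would handle these beforehand, in Step 2, by absorbing such atypical vertices into $\cP$. This leaves only vertices whose link misses at most an $\sqrt{\varepsilon_1}$-fraction, so that the local lemma condition $ep(d+1)<1$ holds. Finally, the two ends of $\cP$ are linkable, so they connect to the first and last blocks of the random path, and closing up gives the Hamilton $\ell$-cycle. Divisibility works out because $n\in(k-\ell)\mathbb N$ and the counts were fixed in Step 2.
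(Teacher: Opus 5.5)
Your outline matches the paper's proof. Claim~\ref{cover v0} builds the short path $\cQ$ through $V_0$ and the $sq$ given paths, using linkable connections (Claim~\ref{c2}) and ratio corrections by $A'(B')^{k-1}$-edges (Claim~\ref{claim:extend l-set}); Claim~\ref{clm:partition} then completes the cycle with the Lu--Sz\'ekely local lemma applied to canonical events on the fixed pattern.

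The one misdiagnosis is your ``delicate point'': nothing needs to be absorbed. Every $a\in A'$ is typical by definition. Every $b\in B'$ has $\{b\}$ $\varepsilon_1$-good, so Proposition~\ref{c1} with $\ell'=1$ (this is where the codegree condition enters) already gives $\overline{\deg}(b,A'(B')^{k-1})\le 2sk\varepsilon_1|A'|\binom{|B'|-1}{k-2}$ for \emph{every} $b$. That is exactly the uniform per-vertex bound the dependency count needs, and $(k-1)$-sets of $B'$ play no role there.
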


Most of the rest of the paper will be devoted to the proof of Theorem~\ref{thm:main result}, for which we need some preparations.
After we prove Theorem~\ref{thm:main result}, we show how our minimum co-degree conditions imply the existence of the desired collection of paths.

\subsection{Useful tools}

The following results from ~\cite[Claims 3.1 and 3.2]{HZ15} concerning relations on vertex partitions are very useful for our proof.
\begin{lemma}\label{lem1}\emph{\cite{HZ15}}
$A\cap B'\neq \emptyset$ implies that $B \subseteq B'$, and $B\cap A'\neq \emptyset$ implies that $A \subseteq A'$.
\end{lemma}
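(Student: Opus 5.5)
The plan is to argue by contradiction using the extremal choice of the partition $V=A\cup B$: among all partitions with $|B|=\lfloor(1-\frac{1}{s(k-\ell)})n\rfloor$ satisfying~\eqref{i1}, we fixed one minimizing $e(B)$. In both parts, the hypothesis produces a vertex of $A$ with small degree into $B$ and a vertex of $B$ with large degree into $B$. Swapping them keeps $|B|$ fixed and strictly decreases the number of edges inside $B$, which contradicts minimality.

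The only computation needed is the effect of a swap. Fix $a\in A$ and $b\in B$, and let $B^*:=(B\setminus\{b\})\cup\{a\}$. The edges of $B^*$ are exactly the edges of $B$ avoiding $b$, together with the edges consisting of $a$ and a $(k-1)$-subset of $B\setminus\{b\}$. Hence
\[
e(B^*) = e(B)-\deg(b,B)+\deg(a,B\setminus\{b\}) \le e(B)-\deg(b,B)+\deg(a,B).
\]
Therefore, if $\deg(a,B)<\deg(b,B)$, we get $e(B^*)<e(B)$. In that case $|B^*|=|B|$ and $e(B^*)<e(B)\le \eps_0\binom{|B|}{k}=\eps_0\binom{|B^*|}{k}$. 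So $(V\setminus B^*,B^*)$ is an admissible partition with fewer edges inside the large part, which contradicts the choice of $B$.

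For the first statement, suppose $a\in A\cap B'$ but $B\not\subseteq B'$, and pick $b\in B\setminus B'$. By~\eqref{eqn:vxt part},
\[
\deg(a,B)\le \eps_1\binom{|B|}{k-1}<\deg(b,B),
\]
so the swap argument gives the contradiction. For the second statement, suppose $b\in B\cap A'$ but $A\not\subseteq A'$, and pick $a\in A\setminus A'$. Then
\[
\deg(a,B)<(1-\eps_1)\binom{|B|}{k-1}\le \deg(b,B),
\]
and again the swap contradicts minimality.

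There is no real obstacle here. The only points needing care are that $\deg(v,B)$ counts $(k-1)$-subsets of $B$ excluding $v$ itself, so that the displayed identity holds for $b\in B$, and that the new partition still satisfies the size and edge-count constraints defining the family over which $e(B)$ was minimized. Both are immediate from the identity above.
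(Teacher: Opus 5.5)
Your proof is correct and follows the same route as the argument the paper defers to (Claim~3.1 of~\cite{HZ15}). You swap a vertex of $A$ with small degree into $B$ for a vertex of $B$ with large degree into $B$. This strictly lowers $e(B)$ while keeping $|B|$ fixed, and your identity $e(B^*)=e(B)-\deg(b,B)+\deg(a,B\setminus\{b\})$ together with the admissibility check makes the resulting contradiction with the minimal choice of the partition rigorous.
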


\begin{lemma}\label{11}\emph{\cite{HZ15}}
$\{|A\setminus A'|,|B\setminus B'|,|A'\setminus A|,|B'\setminus B|\} \leq \varepsilon_{2}|B|$ and $|V_{0}|\leq 2\varepsilon_{2}|B|$.
\end{lemma}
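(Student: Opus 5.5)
The plan is to bound $|B\setminus B'|$ and $|A\setminus A'|$ directly by double counting, and then get the other three bounds from set inclusions. Throughout I use $e(B)\le \varepsilon_0\binom{|B|}{k}$ from~\eqref{i1}, together with $\varepsilon_0=\varepsilon_1^4$ and $\varepsilon_2=2\varepsilon_1^2$.

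\emph{Step 1: $|B\setminus B'|$.} Every $v\in B\setminus B'$ has $\deg(v,B)>\varepsilon_1\binom{|B|}{k-1}$, which is essentially its degree inside $B$. Summing $\deg(v,B)$ over $v\in B$ counts each edge of $B$ exactly $k$ times, so
\[
|B\setminus B'|\,\varepsilon_1\binom{|B|}{k-1}\le k\,e(B)\le k\varepsilon_0\binom{|B|}{k}\le \varepsilon_0|B|\binom{|B|}{k-1}.
\]
Hence $|B\setminus B'|\le \varepsilon_0|B|/\varepsilon_1=\varepsilon_1^3|B|\le\varepsilon_2|B|$.

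\emph{Step 2: $|A\setminus A'|$.} Here I use the co-degree condition. For each $(k-1)$-set $S\subseteq B$ we have $\deg(S)\ge \delta_{k-1}(\cH)\ge n/(s(k-\ell))\ge |A|-1$, since $|A|=\lceil n/(s(k-\ell))\rceil$. Now sum $\deg(S)$ over all $S\in\binom{B}{k-1}$. Edges inside $B$ contribute $k\,e(B)$, and the remainder equals $\sum_{a\in A}\deg(a,B)$. Therefore
\[
\sum_{a\in A}\overline{\deg}(a,B)\le |A|\binom{|B|}{k-1}-\sum_{S}\deg(S)+k\,e(B)\le \binom{|B|}{k-1}+\varepsilon_0|B|\binom{|B|}{k-1}.
\]
Each $a\in A\setminus A'$ contributes more than $\varepsilon_1\binom{|B|}{k-1}$ to the left-hand side. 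This gives $|A\setminus A'|\le (1+\varepsilon_0|B|)/\varepsilon_1\le 2\varepsilon_1^3|B|\le\varepsilon_2|B|$ for $n$ large. The step that needs the most care is the rounding in $|A|$ versus $\delta_{k-1}$, together with the convention for $\deg(v,B)$ when $v\in B$. Both only affect lower-order terms, which the slack between $\varepsilon_1^3$ and $\varepsilon_2=2\varepsilon_1^2$ absorbs.

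\emph{Step 3: the remaining bounds.} Since $\varepsilon_1<1/2$, the sets $A'$ and $B'$ are disjoint. So $A'\setminus A\subseteq B\setminus A'$. A vertex of $B$ that is not in $A'$ may still lie outside $B'$, but this does not matter: the inclusion $A'\setminus A\subseteq B\setminus B'$ holds because any vertex of $A'$ is not in $B'$. Similarly $B'\setminus B\subseteq A\setminus A'$. Steps 1 and 2 then bound both of these by $\varepsilon_2|B|$. Finally, $V_0=V\setminus(A'\cup B')\subseteq (A\setminus A')\cup(B\setminus B')$, so $|V_0|\le 2\varepsilon_2|B|$.
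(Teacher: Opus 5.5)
Your proof is correct and follows the standard double-counting argument of \cite{HZ15}, to which the paper defers: you bound $|B\setminus B'|$ through $k\,e(B)$, bound $|A\setminus A'|$ through $\sum_{a\in A}\overline{\deg}(a,B)\le k\,e(B)+\binom{|B|}{k-1}$ using the co-degree condition, and obtain the remaining bounds from $A'\cap B'=\emptyset$. Two minor points: since $\delta_{k-1}(\cH)$ is an integer you in fact have $\deg(S)\ge\lceil n/(s(k-\ell))\rceil=|A|$, which removes the rounding term entirely; and the sentence ``$A'\setminus A\subseteq B\setminus A'$'' is a slip (it should read $A'\setminus A=B\cap A'$), although the inclusion $A'\setminus A\subseteq B\setminus B'$ that you actually use is correct.
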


We omit their (simple) proofs which are identical to that in~\cite{HZ15}.
Next we give several properties related to linkable $\ell'$-sets.

\begin{fact}\label{c0}
The number of $\beta$-bad $\ell'$-sets in $B$ is at most $\frac{\eps_0}{\beta}\binom{|B|}{\ell'}$.
\end{fact}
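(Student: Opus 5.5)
Fact~\ref{c0} follows from a double-counting argument over pairs (an $\ell'$-set $L'\subseteq B$, an edge of $\cH[B]$ containing it), using the bound $e(B)\le \eps_0\binom{|B|}{k}$ from~\eqref{i1}. Write $b=|B|$ and let $\mathcal{L}$ be the family of $\beta$-bad $\ell'$-sets $L'\subseteq B$. For such an $L'$ we have $\deg(L',B)>\beta\binom{b}{k-\ell'}$. Note that $\deg(L',B)$ counts the $(k-\ell')$-sets $Y\subseteq B$ (disjoint from $L'$) with $L'\cup Y\in E$; each such $Y$ determines a distinct edge of $\cH[B]$ containing $L'$. Summing over $L'\in\mathcal{L}$ gives
\[
\beta\binom{b}{k-\ell'}|\mathcal{L}| \;<\; \sum_{L'\in \mathcal{L}}\deg(L',B)\;\le\;\sum_{L'\in\binom{B}{\ell'}}\deg(L',B)\;=\;\binom{k}{\ell'}e(B)\;\le\;\binom{k}{\ell'}\eps_0\binom{b}{k},
\]
because every edge inside $B$ contains exactly $\binom{k}{\ell'}$ sets of size $\ell'$.

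It remains to compare binomial coefficients. By the identity $\binom{k}{\ell'}\binom{b}{k}=\binom{b}{\ell'}\binom{b-\ell'}{k-\ell'}$ (both sides count pairs $\ell'$-set $\subseteq$ $k$-set $\subseteq B$), we get
\[
|\mathcal{L}|\;<\;\frac{\eps_0}{\beta}\binom{b}{\ell'}\cdot\frac{\binom{b-\ell'}{k-\ell'}}{\binom{b}{k-\ell'}}\;\le\;\frac{\eps_0}{\beta}\binom{b}{\ell'},
\]
since $\binom{b-\ell'}{k-\ell'}\le\binom{b}{k-\ell'}$. This is exactly the claimed bound.

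The only subtlety is how the definition of $\deg(L',B)$ treats $L'\subseteq B$. If it counts $(k-\ell')$-subsets of $B$ that may meet $L'$, those give no edges, so the counting above is unchanged. The normalization $\binom{|B|}{k-\ell'}$ versus $\binom{|B|-\ell'}{k-\ell'}$ only affects the inequality in our favour. In the degenerate case $\ell'=0$ the statement reads: if $e(B)>\beta\binom{b}{k}$ then at most $\eps_0/\beta<1$ sets are bad. This holds trivially since $e(B)\le\eps_0\binom bk$, and it is consistent with the general computation. I expect no real obstacle here.
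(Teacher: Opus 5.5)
Your proof is correct and is essentially the paper's argument: double-count pairs consisting of a $\beta$-bad $\ell'$-set and an edge of $\cH[B]$ containing it, bound the total by~\eqref{i1}, and then simplify the binomial ratio. The paper factors $\binom{|B|}{k}\binom{k}{\ell'}=\binom{|B|}{k-\ell'}\binom{|B|-k+\ell'}{\ell'}$ instead of your $\binom{|B|}{\ell'}\binom{|B|-\ell'}{k-\ell'}$; both give the same bound $\frac{\eps_0}{\beta}\binom{|B|-k+\ell'}{\ell'}$, and your first strict inequality should be $\le$ so that it also covers $\mathcal{L}=\emptyset$.
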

\begin{proof}
By~\eqref{i1}, the number of $\beta$-bad $\ell'$-sets in $B$ is at most
\begin{equation*}
\frac{\varepsilon_0\binom{|B|}{k}\cdot \binom{k}{\ell'}}{\beta \binom{|B|}{k-\ell'}}=\frac{\eps_0}{\beta}\binom{|B|-k+\ell'}{\ell'} \leq \frac{\eps_0}{\beta}\binom{|B|}{\ell'},
\end{equation*}
where the first equality holds because ${a \choose b}{b\choose c}={a \choose b-c}{a-b+c\choose c}$. 
\end{proof}

We frequently need the following simple fact. 

\begin{fact}\label{fact:no bad}
Let $d\in \mathbb{N}$. 
There are at least 
$(1-2^{d}\eps_0/\beta)\binom{|B|}{d}$ $d$-sets in $B$ containing no $\beta$-bad subset.   
\end{fact}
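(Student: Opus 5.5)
The plan is a union bound over the subsets of a $d$-set combined with Fact~\ref{c0}. A $d$-set $D\subseteq B$ fails to be counted exactly when some subset $L'\subseteq D$ is $\beta$-bad. So the number of $d$-sets in $B$ containing a $\beta$-bad subset is at most
\[
\sum_{\ell'=0}^{d}\#\{\beta\text{-bad }\ell'\text{-sets } L'\subseteq B\}\cdot\binom{|B|-\ell'}{d-\ell'},
\]
since each bad $\ell'$-set $L'$ lies in exactly $\binom{|B|-\ell'}{d-\ell'}$ $d$-sets $D\subseteq B$.

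First I would handle the range of $\ell'$. Badness was defined for $0\le\ell'\le\ell$ via $\deg(L',B)>\beta\binom{|B|}{k-\ell'}$. The natural convention extends this to all $\ell'<k$, and for $\ell'\ge k$ no set is bad (or those terms are absorbed trivially). The case $\ell'=0$ deserves a check. The empty set is bad iff $e(B)>\beta\binom{|B|}{k}$, which fails by \eqref{i1} whenever $\beta\ge\eps_0$. If $\beta<\eps_0$ the claimed bound is negative and the statement is vacuous. Fact~\ref{c0} also covers $\ell'=0$ uniformly.

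Next, plugging in Fact~\ref{c0} bounds each term by
\[
\frac{\eps_0}{\beta}\binom{|B|}{\ell'}\binom{|B|-\ell'}{d-\ell'}=\frac{\eps_0}{\beta}\binom{|B|}{d}\binom{d}{\ell'},
\]
using the identity $\binom{a}{c}\binom{a-c}{b-c}=\binom{a}{b}\binom{b}{c}$. Summing over $\ell'\in\{0,\dots,d\}$ gives at most $\sum_{\ell'}\binom{d}{\ell'}\frac{\eps_0}{\beta}\binom{|B|}{d}=2^d\frac{\eps_0}{\beta}\binom{|B|}{d}$. Subtracting this from $\binom{|B|}{d}$ gives the claim.

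I expect no real obstacle. The only point needing care is the bookkeeping of which $\ell'$ the bad-set notion applies to, namely the edge cases $\ell'=0$ and $\ell'\ge k$. In every case the bound from Fact~\ref{c0} is either valid or the count of bad sets is zero, so the union bound goes through.
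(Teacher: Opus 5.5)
Your proof is correct and is essentially the paper's argument. The paper picks a uniformly random $d$-set, bounds the probability that each of its $2^d$ subsets is $\beta$-bad by $\eps_0/\beta$ via Fact~\ref{c0}, and takes a union bound; your identity $\binom{|B|}{\ell'}\binom{|B|-\ell'}{d-\ell'}=\binom{|B|}{d}\binom{d}{\ell'}$ is exactly what turns that union bound into your double count.
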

\begin{proof}
To prove the result, we first randomly pick a subset $K'\subseteq B$ with $|K'|\leq d$. 
Then by Fact~\ref{c0}, 
the probability that $K'$ is $\beta$-bad is at most  ${\frac{\eps_0}{\beta}\binom{|B|}{|K'|}}/{\binom{|B|}{|K'|}}=\varepsilon_0/\beta$. By the union bound, the probability that there is no $\beta$-bad subset of any $d$-set of $B$ is at least $1-2^d\eps_0/\beta$.    
\end{proof}

Our next proposition exploits the property of $\eps_1$-good sets.

\begin{proposition}\label{c1}
Every $\varepsilon_{1}$-good $\ell'$-set $L'\subseteq B'$ satisfies $\deg(L',A'(B')^{k-1})\geq (1-2sk\varepsilon_{1}) |A'|\binom{|B'|-\ell'}{k-1-\ell'}$.
\end{proposition}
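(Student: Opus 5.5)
We count non-edges. Let $L'\subseteq B'$ be an $\varepsilon_1$-good $\ell'$-set, so $\deg(L',B)\le \varepsilon_1\binom{|B|}{k-\ell'}$. The target counts $k$-sets $\{a\}\cup L'\cup Y$ with $a\in A'$ and $Y\subseteq B'\setminus L'$ a $(k-1-\ell')$-set. The total number of such candidate sets is $|A'|\binom{|B'|-\ell'}{k-1-\ell'}$, so it suffices to show the number of non-edges among them is at most $2sk\varepsilon_1|A'|\binom{|B'|-\ell'}{k-1-\ell'}$.

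\textbf{Reduction to $B$.} First I will replace $B'$ by $B$. The candidates with $Y\not\subseteq B$ have $Y$ meeting $B'\setminus B$, which has size at most $\varepsilon_2|B|$ by Lemma~\ref{11}. Their number is at most
\[
|A'|\,\varepsilon_2|B|\binom{|B'|}{k-2-\ell'}=O\!\left(\varepsilon_2\, |A'|\binom{|B'|-\ell'}{k-1-\ell'}\right),
\]
which is negligible because $\varepsilon_2=2\varepsilon_1^2\ll\varepsilon_1$. Note that $L'$ may itself contain vertices of $B'\setminus B$. In that case the sets $Y\subseteq B$ still work, since the argument below only uses degrees of $(k-1)$-sets into $A'$ and their link sets; it never uses $L'\subseteq B$. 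So it remains to bound the non-edges $\{a\}\cup L'\cup Y$ with $Y\subseteq B\setminus L'$.

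\textbf{Main step.} Fix $Y$ and put $S=L'\cup Y$, a $(k-1)$-set. By the co-degree condition, $\deg(S)\ge \frac{n}{s(k-\ell)}\ge |A|-1$. The neighbours of $S$ lie in $A'$, in $B$, or in $V\setminus(A'\cup B)$. The last set has size at most $|A\setminus A'|+|V_0|$ or so, which is $O(\varepsilon_2 n)$. Hence
\[
\overline{\deg}(S,A')\le |A'|-\deg(S)+\deg(S,B)+O(\varepsilon_2 n)\le \deg(S,B)+O(\varepsilon_2 n),
\]
using $|A'|\le |A|+\varepsilon_2|B|$. Summing over all $Y$, the term $\sum_Y\deg(L'\cup Y,B)$ counts each edge containing $L'$ with $k-\ell'$ further vertices in $B$ exactly $k-\ell'$ times. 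Therefore
\[
\sum_Y \deg(L'\cup Y,B)\le k\,\deg(L',B)\le k\varepsilon_1\binom{|B|}{k-\ell'}.
\]

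\textbf{Comparison.} Since $\binom{|B|}{k-\ell'}\approx \frac{|B|}{k-\ell'}\binom{|B|}{k-1-\ell'}$ and $|B|/|A'|\approx s(k-\ell)-1<sk$, this bound is at most about $sk\varepsilon_1|A'|\binom{|B'|-\ell'}{k-1-\ell'}$. The error terms $O(\varepsilon_2 n)\binom{|B|}{k-1-\ell'}$ from the main step, together with the reduction to $B$, contribute another $O(\varepsilon_2/\varepsilon_1)$ fraction, which is absorbed into the slack to reach $2sk\varepsilon_1$.

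\textbf{Expected obstacle.} The main difficulty will be bookkeeping the constants so the total stays below $2sk$. This means tracking $|A'|\ge |A|-\varepsilon_2|B|$, $|B|\le s(k-\ell)|A|$, and the binomial ratios between $|B|$ and $|B'|$.
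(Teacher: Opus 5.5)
Your proposal is correct and is essentially the paper's argument: both proofs lower-bound $\deg(L'\cup Y)$ by the co-degree condition, discard neighbours outside $A'$, and double count the remaining $B$-side neighbours as $(k-\ell')\deg(L',B)$ in order to invoke $\varepsilon_1$-goodness. The differences are cosmetic: you discard the sets $Y$ meeting $B'\setminus B$ up front and split $V$ as $A'\cup B\cup(A\setminus A')$, whereas the paper sums over all $(k-1)$-sets $D\subseteq B'$ and converts $\deg(L',B')$ into $\deg(L',B)$.

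One small fix in your comparison step: keep the exact multiplicity $k-\ell'$ rather than replacing it by $k$. As written, your heuristic silently drops a factor $k/(k-\ell')$. What actually gives a constant below $2sk$ is the identity $(k-\ell')\varepsilon_1\binom{|B|}{k-\ell'}\le \varepsilon_1|B|\binom{|B|}{k-1-\ell'}$ together with $|B|\le (s(k-\ell)-1)(1+O(\varepsilon_2))|A'|$. Alternatively, you can keep the factor $k$ and absorb $k/(k-\ell')$ using $\ell'\le \ell$.
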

\begin{proof}
Given an $\varepsilon_{1}$-good $\ell'$-set $L'\subseteq B'$, we get
\begin{eqnarray}\label{eqd}
\begin{aligned}
\sum_{L' \subseteq D \subseteq B', |D| = k-1} \deg(D,V) = \sum_{L' \subseteq D \subseteq B', |D|=k-1} (\deg(D, A') + \deg(D, B') + \deg(D, V_0)) . \\
\end{aligned}
 \end{eqnarray}
By~\eqref{eqn:conj value}, the left-hand side above is at least
$\binom{|B'|-\ell'}{k-1-\ell'}|A|$. On the other hand,
\begin{eqnarray*}
\begin{aligned}
\sum_{L' \subseteq D \subseteq B', |D|=k-1} (\deg(D, B') + \deg(D, V_0)) \leq (k - \ell') \deg(L', B') + \binom{|B'|-\ell'}{k-1-\ell'}|V_0| .
\end{aligned}
 \end{eqnarray*}
Since $L'$ is $\varepsilon_{1}$-good and $|B'\setminus B|\leq \varepsilon_{2}|B|$, we have
\begin{eqnarray*}
\deg(L', B') &\leq& \deg(L', B) + |B' \setminus B| \binom{|B'|-1}{k-1-\ell'}\\
&\leq& \varepsilon_1 \binom{|B|}{k - \ell'} + \varepsilon_2 |B| \binom{|B'|-1}{k-1-\ell'}.
 \end{eqnarray*}
Thus, we have 
\begin{eqnarray*}
(k - \ell') \deg(L', B') &\leq&  \varepsilon_{1} |B| \binom{|B|-1}{k-1-\ell'}+ (k -\ell') \varepsilon_{2} |B| \binom{|B'|-1}{k-1-\ell'}\\
&\leq& 2\varepsilon_{1} |B| \binom{|B'| - \ell'}{k-1-\ell'}
 \end{eqnarray*}
as $\varepsilon_{2} \ll \varepsilon_{1}$ and $||B|-|B'||\le \eps_2|B|$.
Then, combining \eqref{eqd} and Lemma~\ref{11}, we derive that 
\begin{eqnarray*}
\sum_{\substack{L' \subseteq D \subseteq B',|D| = k - 1}} \deg(D, A') &\ge& \binom{|B'| - \ell'}{k-1-\ell'} (|A| - |V_0|) - 2\varepsilon_{1} |B| \binom{|B'| - \ell'}{k-1-\ell'} \\
&\ge& \binom{|B'| - \ell'}{k-1-\ell'} \big(|A'| - 3\varepsilon_{2} |B| - 2\varepsilon_{1} |B|\big).
 \end{eqnarray*}
Since $|B| \le (sk - s\ell - 1)|A| \le (sk - s\ell)|A'|$, we obtain that
\begin{align*}
\deg(L', A'(B')^{k-1}) =& \quad \sum_{\substack{L' \subseteq D \subseteq B',|D| = k - 1}} \deg(D, A')\\
\geq& \quad \binom{|B'| - \ell'}{k-1- \ell'}\left(1 - (sk-s\ell)(3\varepsilon_{2} + 2\varepsilon_{1})\right) |A'| \\
\geq& \quad (1-2sk\varepsilon_{1})|A'|\binom{|B'| - \ell'}{k-1- \ell'}. \qedhere
\end{align*}

\end{proof}

In the next claim we show that we can connect any two disjoint linkable $\ell$-sets of $B'$ by an $\ell$-path of length $2s$ while avoiding any set of $\frac{n}{2s(k-\ell)}$ vertices of $V$. 
We use $L_i$ to denote the underlying $\ell$-set of the linkable $\ell$-set ${\bf L}_i$ for $i\in\{0,1\}$.

\begin{claim}\label{c2}
Given two disjoint ordered linkable $\ell$-sets ${\bf L}_0, {\bf L}_1$ in $B'$ and a vertex set $U\subseteq V$ with $|U|\leq \frac{n}{2s(k-\ell)}$, there exist two vertices $x, y\in A'\setminus U$ and a $(2sk-(2s+1)\ell-2)$-set $C\subseteq B'\setminus U$ such that $L_0\cup L_1\cup \{x, y\}\cup C$ spans an $\ell$-path of length $2s$ with ends ${\bf L}_0', {\bf L}_1'$, where ${\bf L}_i'$ is the ordered $\ell$-set obtained by reversing the ordering of ${\bf L}_i$ for $i\in\{0,1\}$.
\end{claim}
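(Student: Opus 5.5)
The plan is to fix the vertex pattern of the path first and then choose the free vertices at random, using Proposition~\ref{c1} for every edge. An $\ell$-path of length $2s$ has $N:=\ell+2s(k-\ell)$ vertices $u_1u_2\dots u_N$, with edges $e_i=\{u_{(i-1)(k-\ell)+1},\dots,u_{(i-1)(k-\ell)+k}\}$ for $i\in[2s]$. I would place:
\begin{itemize}
\item ${\bf L}_0'$ on $u_1,\dots,u_\ell$;
\item ${\bf L}_1'$ on $u_N,\dots,u_{N-\ell+1}$, read backwards;
\item $x$ at $u_k$ and $y$ at $u_{N-k+1}$;
\item the remaining $N-2\ell-2=2sk-(2s+1)\ell-2$ positions filled by $C$.
\end{itemize}
Since $(s-1)(k-\ell)<k$, the argument in the proof of Lemma~\ref{lem:color} shows that $u_k$ lies in $e_1,\dots,e_s$. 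Symmetrically, $u_{N-k+1}$ lies in $e_{s+1},\dots,e_{2s}$. Moreover $N-k+1>k$, because $s(k-\ell)\ge k$ and $\ell\ge 1$, so $x\neq y$ are different positions. Hence every edge has exactly one vertex in $A'$ and $k-1$ in $B'$, so it suffices for each $e_i$ to be an edge of $\cH[A'(B')^{k-1}]$.

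Next I would identify which prescribed sets each edge meets. For $i\le s$, $e_i\cap\{u_1,\dots,u_\ell\}$ consists of the last $\ell-(i-1)(k-\ell)$ entries of the reversed tuple. With the orientation conventions of the definition, this is exactly the set $\{v_{(i-1)(k-\ell)+1},\dots,v_\ell\}$ of ${\bf L}_0$. For $i\le s-1$ this set is $\eps_1$-good because ${\bf L}_0$ is linkable. When it is empty (possibly for $i=s$) it is still good, since $\deg(\emptyset,B)=e(B)\le\eps_0\binom{|B|}{k}$. The second half is symmetric, using the linkability of ${\bf L}_1$. Also $e_i$ never meets both $L_0$ and $L_1$. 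So every edge has the form $L'_i\cup\{z\}\cup W_i$, where $L'_i\subseteq B'$ is $\eps_1$-good, $z\in\{x,y\}$, and $W_i$ is a set of positions of $C$ with $|W_i|=k-1-|L'_i|$.

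Now I would choose $x,y$ uniformly at random from $A'\setminus U$ and fill the $C$-positions with a uniformly random injective sequence from $B'\setminus(U\cup L_0\cup L_1)$. For each fixed $i$, the pair $(z,W_i)$ is then uniform over $(A'\setminus U)\times\binom{B'\setminus(U\cup L_0\cup L_1)}{|W_i|}$. By Proposition~\ref{c1}, the number of non-edges containing $L'_i$ of type $A'(B')^{k-1}$ is at most $2sk\eps_1|A'|\binom{|B'|-|L'_i|}{k-1-|L'_i|}$. By Lemma~\ref{11}, $|A'|\ge(1-o(1))\frac{n}{s(k-\ell)}$, so $|A'\setminus U|\ge |A'|/3$; also $|B'\setminus U|\ge(1-o(1))|B'|$. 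Hence the probability that $e_i$ fails is $O(sk\eps_1)$. A union bound over the $2s$ edges gives total failure probability $O(s^2k\eps_1)<1$, so a valid choice exists.

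The main obstacle is the bookkeeping in the second step. One has to check carefully that, with the reversal in ${\bf L}_i'$, the trace of each $e_i$ on $L_0$ or $L_1$ is precisely one of the suffix sets required to be $\eps_1$-good in the definition of linkable, including the boundary edge $e_s$ or $e_{s+1}$ whose trace may be empty. The probabilistic step itself is routine once the pattern is fixed. It would also be worth checking that $|A'\setminus U|$ is really a positive fraction of $|A'|$ given $|U|\le\frac{n}{2s(k-\ell)}$.
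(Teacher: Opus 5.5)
Your overall strategy is the same as the paper's. You use the same vertex pattern: the two $A'$-vertices sit at positions $k$ and $N-k+1=(2s-1)(k-\ell)+1$, each covering one half of the edges. You then make uniform random choices, apply Proposition~\ref{c1} edge by edge, and take a union bound over the $2s$ edges. Your checks that each edge contains exactly one of $x,y$, meets at most one of $L_0,L_1$, and has empty (hence good) trace for $e_s,e_{s+1}$ are correct.

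However, the one step where linkability is used is wrong as written, and it is exactly the step you flagged. Write ${\bf L}_0=(v_1,\dots,v_\ell)$. If ${\bf L}_0'=(v_\ell,\dots,v_1)$ sits on $u_1,\dots,u_\ell$, then the last $m$ entries of ${\bf L}_0'$ are $v_m,\dots,v_1$. So $e_i\cap L_0=\{v_1,\dots,v_{\ell-(i-1)(k-\ell)}\}$ is an \emph{initial} segment of ${\bf L}_0$, whereas linkability only controls the terminal segments $\{v_{(i-1)(k-\ell)+1},\dots,v_\ell\}$. For instance, with $k=7$, $\ell=5$ (so $s=4$), your $e_2$ contains $\{v_1,v_2,v_3\}$. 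This set may well be $\eps_1$-bad, and then Proposition~\ref{c1} says nothing about $e_2$. (Singletons of $B'$ are automatically $\eps_1$-good, so the problem only appears for traces of size at least $2$, but these occur in general.)

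The repair is to place ${\bf L}_0$ itself, in its given order, on $u_1,\dots,u_\ell$. Then $v_\ell$, the entry lying in all of $e_1,\dots,e_{s-1}$, is the one adjacent to the new vertices. Symmetrically, put the $j$-th entry of ${\bf L}_1$ on $u_{N+1-j}$; this is how the paper places ${\bf L}_1=(v_1,\dots,v_\ell)$.

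With this placement every trace is one of the required terminal segments (or empty). The ends of the new path, listed from the interior towards the endpoint, are then ${\bf L}_0'$ and ${\bf L}_1'$. That is the reading implicit in the definition of linkable; compare the new end $(v_{(s-1)(k-\ell)+1},\dots,v_{(s-1)(k-\ell)+\ell})$ in Claim~\ref{claim:extend l-set}. The paper's own proof makes the same slip on its ${\bf L}_0$ side: one should take ${\bf L}_0=(v_{2sk-2s\ell+\ell},\dots,v_{2sk-2s\ell+1})$ there, as in the proof of Claim~\ref{clm:partition}.

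A minor point: $|B'\setminus U|\ge(1-o(1))|B'|$ is false, because $|U|$ can be about $|B'|/(2s(k-\ell)-2)$. You only get a constant fraction, e.g.\ $|B'\setminus U|>\frac{k-2}{k-1}|B'|$ as the paper shows. That still gives failure probability $O(sk\eps_1)<\frac{1}{2s}$ per edge, so your conclusion is unaffected.
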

\begin{proof}

Let ${\bf L}_0=(v_{2sk-2s\ell+1}, v_{2sk-2s\ell+2},\ldots,v_{2sk-2s\ell+\ell})$ and ${\bf L}_1=(v_{1}, v_{2},\ldots,v_{\ell})$ in $B'$. 
Our goal is to choose vertices $v_{\ell+1},\dots, v_{2sk-2s\ell}$ such that 
$e_{i}=\{v_{1+(i-1)(k-\ell)},v_{2+(i-1)(k-\ell)},\ldots,v_{k+(i-1)(k-\ell)}\}$ with $i\in[2s]$ 
are edges of $\cH$, that is, the $e_i$'s form an $\ell$-path of length $2s$.
Indeed, we randomly choose two vertices $v_k, v_{(2s-1)(k-\ell)+1}\in A'\setminus U$ and other $2sk-(2s+1)\ell-2$ vertices from $B'\setminus U$ to form 
a (random) $(A')^{2}(B')^{2sk-(2s+1)\ell-2}$-set $D$.
Let $C:=D\setminus \{v_{k}, v_{(2s-1)(k-\ell)+1}\}\subseteq B'\setminus U$. 
Define ${\bf L}_0'=(v_{2sk-2s\ell+\ell}, v_{2sk-2s\ell+\ell-1},\ldots,v_{2sk-2s\ell+1})$ and ${\bf L}_1'=(v_{\ell}, v_{\ell-1},\ldots,v_{1})$.
To prove that there is a such $D$ such that $L_0\cup L_1\cup D$ spans an $\ell$-path of length $2s$, by the union bound, it suffices to show that $\mathbb{P}[e_{i}\notin E(\cH) ] <\frac{1}{2s}$.

Define that $\ell_{i}=|e_{i}\cap L_1|$ if $i\in[s]$, and $\ell_{i}=|e_{i}\cap L_0|$ if $i\in[2s]\setminus [s]$. 
Since $v_k, v_{(2s-1)(k-\ell)+1}$ are chosen from $ A'\setminus U$ randomly and the vertices of $C$ are chosen from $B'\setminus U$ randomly, they follow a hypergeometric distribution.
Thus, by Proposition~\ref{c1}, for $i\in [2s]$, 
we have
\begin{equation}\label{eqn:prob}
\mathbb{P}[e_{i}\notin E(\cH)]
\leq \frac{2sk\varepsilon_{1}|A'|\binom{|B'|-\ell_{i}}{k-1-\ell_{i}}(k-1-\ell_{i})!}{|A'\setminus U|\binom{|B'\setminus U|}{k-1-\ell_{i}}(k-1-\ell_{i})!}. 
\end{equation}

Since $|A|=\lceil \frac{n}{s(k-\ell)}\rceil$, by Lemma~\ref{11}, we obtain that
$|U|\leq \frac{n}{2s(k-\ell)}\leq \frac{|A|}{2}<\frac{2}{3}|A'|$
 and \[|U|\leq \frac{n}{2s(k-\ell)}\leq \frac{|B|+1}{2(sk-s\ell-1)}\leq \frac{(1+2\varepsilon_{2})|B'|}{2s(k-\ell)-2}\leq \frac{(1+2\varepsilon_{2})|B'|}{2(k-1)}<\frac{|B'|}{k-1}-k.\]
Thus $|A'\setminus U|>\frac{|A'|}{3}$ and $|B'\setminus U|>\frac{k-2}{k-1}|B'|+k$.
Then \eqref{eqn:prob} can be upper bounded by
\[\frac{2sk\varepsilon_{1}|A'|\binom{|B'|}{k-1-\ell_{i}}}{\frac{|A'|}{3}\binom{\frac{k-2}{k-1}|B'|+k}{k-1-\ell_{i}}}\leq \frac{6sk\varepsilon_{1}|B'|^{k-1-\ell_{i}}}{\left(\frac{k-2}{k-1}|B'|\right)^{k-1-\ell_{i}}}\leq 6sk\varepsilon_{1}\left(\frac{k-1}{k-2}\right)^{k-1}<\frac{1}{2s}\]
as $\varepsilon_{1}\ll\frac{1}{sk^{k}}$.
The proof is completed.
\end{proof}



Next we show that we can extend an $\ell$-path from a linkable $\ell$-end with $s-1$ edges of $\cH$ by adding one vertex from $A'$ and $(s-1)(k-\ell)-1$ vertices from $B'$ such that the new $\ell$-end is also linkable. 
This is possible even if we need to avoid at most $\frac{n}{2s(k-\ell)}$ vertices in $V$.
It is useful to us for adjusting the ratio of $|A'|$ and $|B'|$ -- indeed, we use it when $|A'|/|B'| > 1/(s-1)$.

\begin{claim}\label{claim:extend l-set}
 Given an ordered linkable $\ell$-set ${\bf L}\subseteq B'$ and a vertex set $U\subseteq V$ with $|U|\le \frac{n}{2s(k-\ell)}$, there exists an $A'(B')^{(s-1)(k-\ell)-1}$-set $W\subseteq V\setminus U$ such that $L\cup W$ spans an $\ell$-path of length $s-1$ and
 the new $\ell$-end of the path is linkable.
\end{claim}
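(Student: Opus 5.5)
We follow the random-choice argument of Claim~\ref{c2}. Write ${\bf L}=(v_1,\dots,v_\ell)$ so that the path is read leftwards from the current end: we want vertices $v_{\ell+1},\dots,v_{\ell+(s-1)(k-\ell)}$ such that $e_i=\{v_{1+(i-1)(k-\ell)},\dots,v_{k+(i-1)(k-\ell)}\}$, $i\in[s-1]$, are edges of $\cH$. The new $\ell$-end is then the ordered set ${\bf L}^*=(v_{\ell+(s-1)(k-\ell)},\dots,v_{(s-1)(k-\ell)+1})$, reversed so that its last entries are the newest vertices. Place the single $A'$-vertex at $v_k$. This position lies in every edge $e_i$ with $(i-1)(k-\ell)+1\le k$, i.e. $i\le s$, hence in all $s-1$ edges. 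It does not lie in the new end, because $(s-1)(k-\ell)+1>k-(k-\ell)+1$ forces $(s-1)(k-\ell)\ge \ell$, and with $(k-\ell)\nmid k$ the new end starts strictly after $k$. (The exact bookkeeping $(s-1)(k-\ell)+\ell\ge k$ has to be checked, but it follows from $s=\lceil k/(k-\ell)\rceil$.) All other new vertices are drawn uniformly at random from $B'\setminus U$, and $v_k$ is drawn uniformly from $A'\setminus U$.

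\textbf{Edges.} Each $e_i$ is of type $A'(B')^{k-1}$ and contains the $\ell_i$-set $e_i\cap L$. Since ${\bf L}$ is linkable, this set is $\eps_1$-good: it is exactly the ordered tail $(v_{(i-1)(k-\ell)+1},\dots,v_\ell)$ for $i\le s-1$. Proposition~\ref{c1} then gives $\deg(e_i\cap L, A'(B')^{k-1})\ge (1-2sk\eps_1)|A'|\binom{|B'|-\ell_i}{k-1-\ell_i}$. The computation of Claim~\ref{c2}, which uses $|A'\setminus U|>|A'|/3$ and $|B'\setminus U|>\frac{k-2}{k-1}|B'|+k$, bounds $\mathbb P[e_i\notin E]\le 6sk\eps_1(\frac{k-1}{k-2})^{k-1}$. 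Summing over the $s-1$ edges gives a failure probability of $O(sk^k\eps_1)$, which is small.

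\textbf{Linkability of the new end.} This is the main obstacle. The new end ${\bf L}^*$ lies in $B'$, and its required tails are the ordered sets $T_j$ of the last $\ell-(j-1)(k-\ell)$ entries, $j\in[s-1]$. Some of these tails contain old vertices of $L$ together with new random vertices. We need every $T_j$ to be $\eps_1$-good, i.e. $\deg(T_j,B)\le\eps_1\binom{|B|}{k-|T_j|}$.

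For tails consisting entirely of new vertices, we use that $B'$ is close to $B$ (Lemma~\ref{11}) together with Fact~\ref{c0}/Fact~\ref{fact:no bad}. A uniformly random $d$-set in $B'$ lies in $B$ with probability $1-O(\eps_2)$, and among $d$-sets of $B$ the $\eps_1$-bad ones form a proportion at most $\eps_0/\eps_1=\eps_1^3$. Hence the probability that some tail is bad is $O(2^k(\eps_1^3+\eps_2))$.

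For tails mixing old vertices $S\subseteq L$ with new random vertices $R$, the problem is that $S$ itself might be bad. Here we observe that the new end has length $\ell$, while the number of new vertices after position $k$ is $(s-1)(k-\ell)+\ell-k\ge$ the required tail lengths. So I expect every tail $T_j$, all of which are suffixes toward the new side, to consist only of vertices in $B'\setminus U$ chosen after $v_k$, with no vertex of $L$ involved. If that index check fails, we would instead bound $\deg(S\cup R,B)$ through the averaging $\sum_R\deg(S\cup R,B)\le \binom{k-|S|}{|R|}\deg(S,B)\le \binom{k}{|R|}\binom{|B|}{k-|S|}$. Markov's inequality then makes $S\cup R$ bad with probability $O(1/(\eps_1 n^{0}))$, and this needs the good-threshold slack, so the index check is preferable.

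A union bound over all failure events gives total probability below $1$, so a suitable $W=\{v_k\}\cup(\text{new }B'\text{-vertices})$ exists. Its size is $1+((s-1)(k-\ell)-1)$, as required.
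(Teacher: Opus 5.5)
There is a genuine gap: you put the single $A'$-vertex at position $k$, and that position always lies in the new $\ell$-end. By the definition $s=\lceil k/(k-\ell)\rceil$ we have $(s-1)(k-\ell)<k$. So the new end, which occupies positions $(s-1)(k-\ell)+1,\dots,(s-1)(k-\ell)+\ell$, starts at or before position $k$. Since $s(k-\ell)>k$ gives $(s-1)(k-\ell)>\ell$, it ends beyond $2\ell>k$. Hence $v_k$ is always inside the new end, and your assertion that ``the new end starts strictly after $k$'' is false. The inequality $(s-1)(k-\ell)+\ell\ge k$ that you flag actually confirms that $v_k$ is inside. For example, with $k=7$, $\ell=5$ we get $s=4$, the new vertices are $v_6,\dots,v_{11}$, and the new end is $v_7,\dots,v_{11}$. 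Your fallback count also fails for the first tail. The tail of length $\ell$ (the case $j=1$) is the whole new end, while only $(s-1)(k-\ell)+\ell-k<\ell$ new vertices come after $v_k$.

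This breaks the argument, not just the bookkeeping. First, the new end is then not contained in $B'$. So it cannot be fed back into this claim or into Claim~\ref{c2}, which is exactly how the claim is used in Claim~\ref{cover v0}. Second, linkability requires the full $\ell$-set $\{v_k\}\cup S$ to be $\eps_1$-good. For $v_k\in A'$ we have $\overline{\deg}(v_k,B)\le \eps_1\binom{|B|}{k-1}$. Averaging over $(\ell-1)$-sets $S\subseteq B$ then shows that $\{v_k\}\cup S$ is $\eps_1$-bad for all but an $O(\eps_1)$-fraction of choices. So with your placement the new end is non-linkable with probability close to $1$, and no Markov-type estimate can rescue it.

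The fix is to put the $A'$-vertex at position $\ell+1$, as the paper does. Since $(s-2)(k-\ell)<\ell$, the vertex $v_{\ell+1}$ lies in every $e_i$ with $i\in[s-1]$. Since $(s-1)(k-\ell)>\ell$, it is not in the new end; in fact any position in $[\ell+1,(s-1)(k-\ell)]$ works. The new end then consists only of random vertices of $B'\setminus U$ and is disjoint from $L$. The rest of your argument then goes through exactly as in the paper:
\begin{itemize}
\item edge probabilities via Proposition~\ref{c1} and the computation in Claim~\ref{c2};
\item goodness of all tails of the new end via Lemma~\ref{11} and Fact~\ref{fact:no bad}.
\end{itemize}
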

\begin{proof}
Let ${\bf L}=(v_1,v_2,\ldots,v_\ell)$ 
and $L=\{v_1,v_2,\ldots,v_\ell\}$
in $B'$.
Our goal is to choose vertices $v_{\ell+1},\ldots,v_{\ell+(s-1)(k-\ell)}$ such that 
$e_{i}=\{v_{1+(i-1)(k-\ell)},v_{2+(i-1)(k-\ell)},\ldots,v_{k+(i-1)(k-\ell)}\}$ with $i\in[s-1]$ are edges of $\cH$ (i.e., the $e_i$'s form an $\ell$-path of length $s-1$), and the new $\ell$-end of the path is linkable.
Indeed, 
we randomly choose a vertex $v_{\ell+1}\in A'\setminus U$ and other $(s-1)(k-\ell)-1$ vertices from $B'\setminus U$ to form 
a (random) $A'(B')^{(s-1)(k-\ell)-1}$-set $W$.
Let $S:=W\setminus \{v_{\ell+1}\}\subseteq B'\setminus U$.
Define that $\ell_{i}=|e_{i}\cap L|$ with $i\in[s-1]$.
Since $v_{\ell+1}$ is chosen from $A'\setminus U$ randomly and the vertices of $S$ are chosen from $B'\setminus U$ randomly, they follow a hypergeometric distribution.
Note that the distribution and the event $e_i \notin E(\cH)$ are the same as in Claim~\ref{c2}.
Thus, by the proof of Claim~\ref{c2}, we have $\mathbb{P}[e_{i}\notin E(\cH)]<\frac{1}{2s}$.
By the union bound, we know that the probability that all edge $e_i$'s form $\ell$-path of length $s-1$ is at least $1-\frac{s-1}{2s}>\frac{1}{2}$.
By Lemma~\ref{11} and Fact~\ref{fact:no bad}, 
the probability that $(v_{(s-1)(k-\ell)+1},\ldots,v_{(s-1)(k-\ell)+\ell})$ 
is non-linkable is at most $2^\ell\eps_2+\ell\cdot \frac{|B'\setminus B|}{|B'|}\leq (2^\ell+\ell)\eps_2$.
Hence, with probability $1-\frac{1}{2}-(2^\ell+\ell)\eps_2>0$, $L\cup W$ spans an $\ell$-path of length $s-1$ and the new $\ell$-end of the path is linkable.
\end{proof}

\subsection{Proof of Theorem~\ref{thm:main result}}\label{sec:ext}

In this subsection we prove Theorem~\ref{thm:main result}.
The work is split into two steps: we first connect the vertex-disjoint paths (with linkable ends) to a single (yet short) one that also covers the vertices of $V_0$, then we extend this single path to a Hamilton $\ell$-cycle solely using $A'(B')^{k-1}$ edges, via Lov\'asz Local Lemma.


\subsubsection{Building a short $\ell$-path $\cQ$}
 
For the first step, we connect the $sq$ vertex-disjoint $\ell$-paths with linkable $\ell$-ends given in Theorem~\ref{thm:main result} by applying Claim \ref{c2} repeatedly to build a short 
$\ell$-path $\cQ$ while also covers the vertices of $V_0$ -- indeed, we first put them into short paths. 
Note that we denote the underlying $\ell$-sets of linkable $\ell$-sets ${\bf L}_0$ and ${\bf L}_1$ by $L_0$ and $L_1$, respectively.
\begin{claim}\label{cover v0}
There exists a non-empty $\ell$-path $\cQ$ in $\cH$ with the following properties:
\begin{itemize}
    \item $V_{0}\subseteq V({\cQ})$,
    \item $|V({\cQ})|\leq (2ss_*+6s^2)k\eps_2|B|$,
    \item the two $\ell$-ends ${\bf L}_0,{\bf L}_1$ of $\cQ$ are linkable in $B'$, and
    \item $|B_{1}|=(sk-s\ell-1)|A_1|+\ell$, where $A_1=A'\setminus V(\cQ)$ and $B_1=(B'\setminus V(\cQ))\cup L_0\cup L_1$.
\end{itemize}
\end{claim}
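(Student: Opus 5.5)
Throughout, $q=|A\cap B'|$ and $P_1,\dots,P_{sq}$ are the paths supplied by Theorem~\ref{thm:main result}. We build $\cQ$ in three stages.

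\emph{Stage 1: absorb $V_0$ into short paths.} For each $v\in V_0$ we find a short $\ell$-path through $v$ with both $\ell$-ends linkable sets in $B'$. Since $v\notin B'$, we have $\deg(v,B)>\eps_1\binom{|B|}{k-1}$. By Fact~\ref{fact:no bad} and Lemma~\ref{11}, most $(k-1)$-sets of $B'$ contain no $\eps_1$-bad subset, so we can pick an edge $\{v\}\cup T$ with $T\subseteq B'$ whose subsets are all $\eps_1$-good. Place $v$ in a middle position and order $T$ so that both length-$\ell$ end segments are linkable. We then extend each end by one or two steps using Claim~\ref{claim:extend l-set}, or the random choice in its proof, so that the final ends are linkable and avoid everything used so far. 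Each such path has $O(sk)$ vertices. We do this greedily for all $v\in V_0$, always avoiding previously used vertices. By Lemma~\ref{11} the total number of used vertices is at most $O(sk)\cdot 2\eps_2|B|\ll \frac{n}{2s(k-\ell)}$, so the avoidance hypotheses of the claims hold.

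\emph{Stage 2: connect everything.} Order the $sq$ given paths and the $|V_0|$ new paths and join consecutive ones with Claim~\ref{c2}, taking $U$ to be the set of vertices used so far. Each application adds $2s$ edges: two vertices from $A'$ and $2sk-(2s+1)\ell-2$ vertices from $B'$. The number of joins is at most $sq+|V_0|\le 3s\eps_2|B|$, and $q\le\eps_2|B|$ by Lemma~\ref{11}. So the total size is at most $(ss_*+O(s^2k))\eps_2|B|$, which is below the required bound and well below $\frac{n}{2s(k-\ell)}$. This keeps every application of the claims legal and gives the first three bullets.

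\emph{Stage 3: fix the ratio.} This is the main obstacle. Let $A_1,B_1$ be as in the statement. We compute $|B_1|-(sk-s\ell-1)|A_1|-\ell$ from $|A|=\lceil n/(s(k-\ell))\rceil$, $(k-\ell)\mid n$, and the vertex counts contributed by each stage. A Claim~\ref{c2} join uses $A'$ and $B'$ in ratio $2:2sk-(2s+1)\ell-2$, which is more $A'$-heavy than $1:sk-s\ell-1$. A Claim~\ref{claim:extend l-set} extension uses $1:(s-1)(k-\ell)-1$, also $A'$-heavy. Covering a vertex of $V_0$ or of $A\cap B'$ shifts the balance the other way. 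This is why there are exactly $sq$ paths: the $q$ vertices of $A$ lying in $B'$ must be compensated.

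The plan is to show that, after Stage 2, the discrepancy is a small multiple of $k-\ell$. If $B_1$ is too small, we correct it by appending extra extensions to an end of $\cQ$ using Claim~\ref{claim:extend l-set}. If $B_1$ is too large, we instead extend using $s$-step paths built entirely from $A'(B')^{k-1}$ edges, chosen with a more $B'$-heavy pattern; these are found by the same random argument via Proposition~\ref{c1}. Divisibility is handled by noting that each step changes $|A_1|+|B_1|$ by a multiple of $k-\ell$, while $|A_1|+|B_1|\equiv n\pmod{k-\ell}$ up to the fixed $\ell$-end contribution. The hard part is checking that combinations of these two gadget types exactly hit the target $(sk-s\ell-1)|A_1|+\ell$ within $O(\eps_2|B|)$ steps. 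I would first write the balance change of each gadget as an integer vector, then solve the resulting small linear Diophantine equation. In doing so I would use $(k-\ell)\nmid k$, which gives $s(k-\ell)-k\in[1,k-\ell-1]$, to guarantee that the gcd condition is satisfied.
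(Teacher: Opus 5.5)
Your proposal has two genuine gaps.

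\textbf{Stage 1.} Since $\ell>k/2$, the two $\ell$-ends of a single edge cover the whole edge, so $v$ necessarily lies in an end of $\{v\}\cup T$. Claim~\ref{claim:extend l-set} cannot be applied to such an end. It requires a linkable end inside $B'$, and its random argument rests on Proposition~\ref{c1}, which only controls $\eps_1$-good subsets of $B'$. Getting $v$ out of the ends requires further edges that contain $v$ together with the remaining vertices of the current end. An arbitrarily chosen edge of the link of $v$ gives you no control over whether these extensions exist.

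The paper handles this with Lemma~\ref{turan number}. Restrict the link of $v$ to $(k-1)$-sets of $B$ that avoid used vertices and contain no $\eps_1$-bad subset. This restricted link still has order $\eps_1|B|^{k-1}$ edges, far above the $O(\binom{|B|}{k-2})$ Tur\'an bound, so it contains a $(k-1)$-uniform tight path $P^{k-1}_k$. Inserting $v$ in the middle of that path gives a tight path all of whose edges contain $v$. From it one extracts an $\ell$-path of length $s$ with $v$ in every edge, at least $\ell$ vertices on each side of $v$, and all other vertices in $B$. Its ends are then automatically linkable.

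\textbf{Stage 3.} Here the proposed ``more $B'$-heavy'' gadget cannot exist. Put $w:=(sk-s\ell-1)|A'\setminus V(\cQ)|-|B'\setminus V(\cQ)|$; the last bullet of the claim is exactly $w=\ell$. In an $\ell$-path every vertex lies in at most $s$ consecutive edges. So an extension by $j$ edges of type $A'(B')^{k-1}$ from an end in $B'$ uses $a\ge\lceil j/s\rceil$ new $A'$-vertices. It therefore changes $w$ by $j(k-\ell)-s(k-\ell)a\le 0$. Only edges inside $B'$ can raise $w$, which is precisely why the paths $P_j$ are hypothesised.

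Consequently, if $w<\ell$ after Stage 2, your plan has no way out. Your Stage 1, even if it could be carried out, pushes $w$ in that bad direction. For $v\in V_0\subseteq A$, one edge through $v$ plus $e$ applications of Claim~\ref{claim:extend l-set}, together with its join, contributes $-(s-1+e)(k-\ell)$ to $w$. This can make $w<\ell$ when $q$ is small and $|V_0|$ is large.

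The missing observation is that with the length-$s$ paths inside $B$, no second gadget is needed.
\begin{itemize}
\item Each such path contributes $+\ell$ to $w$, which cancels the $-\ell$ of its join via Claim~\ref{c2}.
\item For $q>0$, a direct count then gives $w=r+\ell+\sum_{j=1}^{sq}(|V(P_j)|-k)\ge \ell$, where $r=s(k-\ell)|A|-n\ge 0$. Each $P_j$ has at least $k$ vertices of $B'$. It pays $\ell$ for one join and $k-\ell$ towards the $s(k-\ell)$ owed for each vertex of $A\cap B'$.
\item A similar count gives $w\ge\ell$ when $q=0$. If moreover $V_0=\emptyset$, one starts from a seed path whose $s$ edges share one $A'$-vertex.
\item Since $(k-\ell)\mid n$ and $|V(\cP)|\equiv \ell$, we also have $w\equiv\ell\pmod{k-\ell}$.
\end{itemize}
Hence $(w-\ell)/(k-\ell)$ applications of Claim~\ref{claim:extend l-set} finish the proof. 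No Diophantine system is needed, and the hypothesis $(k-\ell)\nmid k$ is not used at this step.
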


\begin{proof}
To prove the claim, we will discuss the $\ell$-path $\cQ$ in $\cH$ in two cases.\\
{\bf Case 1.} $q=|A\cap B'|\neq 0$.

Let 
$V_{0}=\{x_{1},x_{2},\ldots,x_{|V_{0}|}\}$. 
Recall that there are $sq$ vertex-disjoint $\ell$-paths $P_1,P_2,\ldots,P_{sq}$ in $B'$, each of which contains two linkable $\ell$-ends and has length at most $s_*$.
Let $\cP_0$ denote the family of these $sq$ $\ell$-paths and then $|V(\cP_0)|\le sq(\ell+s_*(k-\ell))$.
Next for each $x_i\in V_0$, we want to choose $\ell+s(k-\ell)-1$ vertices of $B$ ($B\subseteq B'$) together with $x_i$ forming an $\ell$-path of length $s$ such that these $|V_{0}|$ paths are pairwise vertex-disjoint, and also vertex-disjoint from the existing paths in $\cP_0$, 
and all these paths have linkable $\ell$-ends. 
Now suppose that we have found such $\ell$-paths for $x_1,x_2,\ldots,x_{i-1}$ with $i\leq |V_0|$ and we shall show that we can find another such path.
By the definition of $V_0$, we have $\deg (x_i,B)> \eps_1\binom{|B|}{k-1}$.
Let $\cG_{x_i}$ be the $(k-1)$-graph on $B$ such that $e'\in E(\cG_{x_i})$ if
\begin{itemize}
    \item $e'\cup \{x_i\}\in E(\cH)$,
    \item $e'$ does not contain any vertex from the existing paths,
    \item $e'$ does not contain any $\eps_1$-bad subset.
\end{itemize}
We derive a lower bound for $e(\cG_{x_i})$. 
Obviously, $A\setminus A'=(A\cap B')\cup V_0$ because $B\subseteq B'$. 
Thus, by Lemma~\ref{11}, we have $|V_0|\le 2\eps_2|B|$ and $q\le |A\setminus A'|\le \eps_2 |B|$.
Notice that 
$|V(\cP_0)|+(\ell+s(k-\ell)- 1)(i-1)<ss_*kq+sk|V_0|\leq 2s_*sk\eps_2|B|$
and consequently at most $2s_*sk\eps_2|B|\binom{|B|-1}{k-2}<2s_*sk^2\eps_2\binom{|B|}{k-1}$ $(k-1)$-sets of $B$ intersect the existing paths.
Recall that the number of $(k-1)$-sets in $B$ containing $\eps_1$-bad subset is at most
$2^{k-1}\eps_2{|B| \choose {k-1}}$ by Fact \ref{fact:no bad}. 
Thus, we derive that
\[e(\cG_{x_i})> \eps_1\binom{|B|}{k-1}-2s_*sk^2\eps_2\binom{|B|}{k-1}-2^{k-1}\eps_2\binom{|B|}{k-1}\geq \frac{\eps_1}{2}\binom{|B|}{k-1}\]
since $\eps_2\ll \eps_1$ and $|B|$ is sufficiently large. 
Since $\cG_{x_i}$ is a $(k-1)$-graph on $B$, by Lemma~\ref{turan number}, the upper bound of $\mathrm{ex}(|B|, P^{k-1}_{k})$ is $\frac{k+1}{2} \binom{|B|}{k-2}<\frac{\eps_1}{2}\binom{|B|}{k-1}$.
Then we can find a copy of $(k-1)$-uniform tight path $P^{k-1}_k$ in $\cG_{x_i}$, which together with $x_i$, forms a copy of $k$-uniform tight path $P'$ of length $k$ (so of order $2k-1$).
Now we can take a subpath of $P'$ of order $\ell+s(k-\ell)$, so that on the path, there are at least $\ell$ vertices both before and after $x_i$.
Finally, this tight path contains a ($k$-uniform) $\ell$-path of length $s$, denoted by $P'_i$, as a subgraph (and we keep the ordering of the vertices).
By construction, we know that all edges of $P'_i$ contain no $\eps_1$-bad subsets of $B$, which yields that both ends of $P'_i$ are linkable.
Thus, we can always find a desired $\ell$-path of length $s$ containing $x_i$.

Let $\cP_1$ denote the family of $|V_0|$ $\ell$-paths we just obtained.
Now we apply Claim~\ref{c2} repeatedly to connect the ends of two $\ell$-paths in $\cP_0$ and $\cP_1$ to a single $\ell$-path $\cP$ while avoiding the vertex set $U$ of all previously used vertices.
Repeating this manner, we would obtain an $\ell$-path $\cP$ of length at most  $s_*\cdot sq+s|V_0|+2s(|V_0|+sq-1)\leq ss_*q+3s^2(|V_0|+q)-2s\le (ss_*+3s^2)\eps_2|B|-2s$,
as $|V_0|+q=|A\setminus A'|\le \eps_2 |B|$ by Lemma~\ref{11}, 
which yields that $|V(\cP)|\le (ss_*+3s^2)k\eps_2|B|$.
Therefore, this process is possible as the number of vertices to avoid is at most 
$(ss_*+3s^2)k\eps_2|B|<\frac{n}{2s(k-\ell)}$.
Therefore, we obtain an $\ell$-path $\cP$ containing all vertices from $sq$ $\ell$-paths from the statement of the theorem and $V_0$, such that $|V(\cP)|\le (ss_*+3s^2)k\eps_2|B|$ and $\cP$ has two linkable ends.

Now we assume that 
\begin{equation*}\label{eqn:shet}
w:=(sk-s\ell-1)|A'\setminus V(\cP)|-|B'\setminus V(\cP)|=(sk-s\ell)|A'\setminus V(\cP)|-(n-|V(\cP)|).
\end{equation*}
As $q>0$, we have $|A'\setminus V(\cP)|\le |A|-1$ and thus $w\le |V(\cP)|\le (ss_*+3s^2)k\eps_2|B|$.
Moreover, when modulo $k-\ell$, we have
${w}\equiv {|V(\cP)|} \equiv \ell$,
(as $n\in (k-\ell)\mathbb N$) which implies that $\frac{w-\ell}{k-\ell}\in \mathbb N$.
Next we extend $\cP$ to an $\ell$-path $\cQ$ by applying Claim~\ref{claim:extend l-set} $\frac{w-\ell}{k-\ell}$ times and this is possible as the number of vertices to avoid is at most $|V(\cP)|+w\le (2ss_*+6s^2)k\eps_2|B|<\frac{n}{2s(k-\ell)}$.
Note that $\cQ$ also has two linkable $\ell$-ends, denoted by ${\bf L}_0, {\bf L}_1$ and $|V(\cQ)|=|V(\cP)|+w-\ell< (2ss_*+6s^2)k\eps_2|B|$.
Since $V(\cQ)\setminus V(\cP)$ contains $\frac{w-\ell}{k-\ell}$ vertices of $A'$ and $\frac{w-\ell}{k-\ell}((s-1)(k-\ell)-1)$ vertices of $B'$, we have 
\begin{eqnarray}\label{eqn:vxq}
\begin{aligned}
&(sk-s\ell-1)|A'\setminus V(\cQ)|-|B'\setminus V(\cQ)|\\
&=w-(sk-s\ell-1)\cdot \frac{w-\ell}{k-\ell}+\frac{w-\ell}{k-\ell}\cdot ((s-1)(k-\ell)-1)=\ell.
\end{aligned}
\end{eqnarray}

Let $A_1=A'\setminus V(\cQ)$ and $B_1=(B'\setminus V(\cQ))\cup L_0\cup L_1$. Then, $|B_1|=(sk-s\ell-1)|A_1|+\ell$ by \eqref{eqn:vxq}.
\\
{\bf Case 2.} $q=|A\cap B'|=0$.

This case is indeed treated similar to Case 1 except that $\cP_0=\emptyset$. 
Let $V_{0}=\{x_{1},x_{2},\ldots,x_{|V_{0}|}\}$ and suppose that there are $i$ $(i<|V_0|)$ $\ell$-paths of length $s$. 
Note that for every $x_i\in V_0$,
\begin{eqnarray*}\label{eqn:vx}
\begin{aligned}
\deg(x_i,B')&\ge \deg(x_i,B)-|B\setminus B'|\binom{|B|-1}{k-2}\\
&> \eps_1\binom{|B|}{k-1}-\eps_2|B|\binom{|B|-1}{k-2}\geq \frac{\eps_1}{2}\binom{|B'|}{k-1}
\end{aligned}
\end{eqnarray*}
by Lemma~\ref{11} and $\eps_2\ll \eps_1$. 
We then construct $\cG_{x_i}$ as in Case 1 and obtain $e(\cG_{x_i})\ge \frac{\eps_1}{4}\binom{|B'|}{k-1}$. 
Since $\cG_{x_i}$ is a $(k-1)$-graph on $B$, by Lemma~\ref{turan number}, the upper bound of $\mathrm{ex}(|B|, P^{k-1}_{k})$ is $\frac{k+1}{2} \binom{|B|}{k-2}<\frac{\eps_1}{4}\binom{|B'|}{k-1}$
as $|B|=|B'|+|B\setminus B'|\leq |B'|+\eps_2|B|$.
Then we can find a copy of $(k-1)$-uniform tight path $P^{k-1}_k$ in $\cG_{x_i}$.
Similar as in the previous case, we can choose $P^*_i$ as a $k$-uniform $\ell$-path containing $x_i$ such that both ends of $P^*_i$ are linkable.
Thus, there is a desired $\ell$-path of length $s$ containing $x_i$.
Next apply Claim \ref{c2} repeatedly to connect the ends of two $\ell$-paths to a single $\ell$-path $\cP$ while avoiding all previously used vertices.
This would yield an $\ell$-path $\cP$ of length $s|V_0| + 2s(|V_0|-1) = 3s|V_0| - 2s$, which is possible as the number of vertices to avoid is at most $3sk|V_0|\le 6sk\eps_2|B|<\frac{n}{2s(k-\ell)}$. 
Thus, we can build the desired path $\cP$ with linkable ends satisfying that $|V(\cP)|\le 6sk\eps_2|B|$.

Here $w$ is defined as in Case 1.
Thus, we have 
\begin{eqnarray*}\label{eqn:t2}
\begin{aligned}
w\le (sk-s\ell)|A'|-n+|V(\cP)|
\le sk\eps_2|B|+6sk\eps_2|B|= 7sk\eps_2|B|.
\end{aligned}
\end{eqnarray*}
Similar to the previous case, we have ${w}\equiv {|V(\cP)|} \equiv \ell \mod (k-\ell)$, which implies that $\frac{w-\ell}{k-\ell}\in \mathbb N$.
We extend $\cP$ to an $\ell$-path $\cQ$ by applying Claim~\ref{claim:extend l-set} $\frac{w-\ell}{k-\ell}$ times and this is possible as the number of vertices to avoid is at most $7sk\eps_2|B|<\frac{n}{2s(k-\ell)}$.
Note that $\cQ$ has two linkable $\ell$-ends and 
\begin{eqnarray*}
\begin{aligned}
|V(\cQ)|=|V(\cP)|+w-\ell\le 13sk\eps_2|B|.
\end{aligned}   
\end{eqnarray*}
It is easy to see that~\eqref{eqn:vxq} also holds and we define $A_1$ and $B_1$ in the same way as in Case 1.

If $V_0= \emptyset$, then by the definition of $A'$, we can arbitrarily choose a vertex $a'\in A'$ forming an $\ell$-path $\cP$ of length $s$ with two linkable $\ell$-ends such that all other vertices are from $B'$ and $a'$ is in the intersection of these $s$ edges. 
Thus, we have $|V(\cP)|=\ell+s(k-\ell)$.
Define $w$ as in Case 1. 
We extend $\cP$ to an $\ell$-path $\cQ$ by applying Claim~\ref{claim:extend l-set} $\frac{w-\ell}{k-\ell}$ times and this is possible as the number of vertices to avoid is at most $7sk\eps_2|B|<\frac{n}{2s(k-\ell)}$.
Then $|V(\cQ)|=\ell+s(k-\ell)+w-\ell\le 8sk\eps_2|B|$.
The rest is the same as in the previous case.
\end{proof}

After constructing the short $\ell$-path $\cQ$, we can derive the following properties. 
\begin{proposition}\label{qpro}
Define $A_1,B_1$ 
and ${\bf L}_0,{\bf L}_1$ 
as in Claim \ref{cover v0}.
Then we have the following properties:
\begin{enumerate}[label=(\alph*)]
    \item $|B_1|\ge (1-\eps_1)|B|$,
    \item for every $a\in A_1$, $\overline{\deg}(a,B_1)<3\eps_1\binom{|B_1|}{k-1}$
    and for every $b\in B_1$, $\overline{\deg}(b,A_1B^{k-1}_1)\le 3k\eps_1\binom{|B_1|}{k-1}$,
    \item 
for each $i\in [s-1]\cup \{0\}$ and $j\in \{0,1\}$, 
$\overline{\deg}(L_j^i,A_1B^{k-1}_1)\le (2s+1)k\eps_1\binom{|B_1|}{k-\ell_i}$, where $L_j^i$ denotes the $\ell_i$-set of ${\bf L}_j$, with $\ell_0=0$ and $\ell_i=i\ell-(i-1)k$ for $i\in[s-1]$.
\end{enumerate}
\end{proposition}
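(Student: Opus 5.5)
The plan is to derive all three properties from the definitions of $A',B'$ in~\eqref{eqn:vxt part}, Lemma~\ref{11}, Proposition~\ref{c1}, and the fact that $\cQ$ is tiny: $|V(\cQ)|\le (2ss_*+6s^2)k\eps_2|B|$ by Claim~\ref{cover v0}. Since $\eps_2=2\eps_1^2\ll\eps_1$, every error of order $\eps_2$ can be absorbed into the $\eps_1$-terms.

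For (a), note that $B_1\supseteq B'\setminus V(\cQ)\supseteq (B\cap B')\setminus V(\cQ)$. By Lemma~\ref{1111}'s companion statement, Lemma~\ref{11}, we have $|B\setminus B'|\le\eps_2|B|$. Hence
\[|B_1|\ge |B|-\eps_2|B|-(2ss_*+6s^2)k\eps_2|B|\ge(1-\eps_1)|B|.\]

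For the first part of (b), take $a\in A_1\subseteq A'$. By definition of $A'$ we have $\overline{\deg}(a,B)\le\eps_1\binom{|B|}{k-1}$. Passing from $B$ to $B_1$, the only new non-edges are $(k-1)$-sets meeting $B_1\setminus B\subseteq (B'\setminus B)\cup L_0\cup L_1$, a set of size at most $\eps_2|B|+2\ell$. There are at most $(\eps_2|B|+2\ell)\binom{|B_1|}{k-2}\le 2\eps_2 k\binom{|B_1|}{k-1}$ such sets. Together with $\binom{|B|}{k-1}\le (1+2\eps_1)^{k}\binom{|B_1|}{k-1}$ from (a), this gives the bound $3\eps_1\binom{|B_1|}{k-1}$.

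For the second part of (b), take $b\in B_1$ and count pairs. A non-edge $\{a\}\cup T$ containing $b$, with $a\in A_1$ and $T\subseteq B_1$, either has $T\setminus\{b\}$ meeting $B_1\setminus B$, or lies in $\{a\}\cup B$ and is a non-edge of $a$ in $B$. The first kind number at most $|A_1|\cdot 2\eps_2|B|\binom{|B_1|}{k-3}$, which is negligible. For the second kind, I would apply Proposition~\ref{c1} with $\ell'=1$ to $L'=\{b\}$, after checking that $\{b\}$ is $\eps_1$-good. If $b\in L_0\cup L_1$, goodness follows from linkability. Otherwise $b\in B'$, so $\deg(b,B)\le\eps_1\binom{|B|}{k-1}\le\eps_1\binom{|B|}{k-1}$, which is exactly the goodness condition for $\ell'=1$. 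Proposition~\ref{c1} then gives $\overline{\deg}(b,A'(B')^{k-1})\le 2sk\eps_1|A'|\binom{|B'|-1}{k-2}$. Restricting to $A_1\subseteq A'$ and $B_1$ adds only $O(\eps_2)$ further terms.

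The normalization needs care, since the claimed bound is $3k\eps_1\binom{|B_1|}{k-1}$ and not $|A_1|\binom{|B_1|}{k-2}$. Using $|A'|\binom{|B'|}{k-2}\approx \frac{k-1}{sk-s\ell-1}\binom{|B'|}{k-1}$, the quantity $2sk\eps_1\cdot\frac{k-1}{s(k-\ell)-1}$ is at most about $2k\eps_1\cdot\frac{s(k-1)}{s(k-\ell)-1}$. This is not obviously at most $3k$. I expect this constant bookkeeping to be the main obstacle. I would first try to reprove the bound directly via the averaging in Proposition~\ref{c1}'s proof (summing codegrees over $(k-1)$-sets $D\ni b$ in $B'$), obtaining $\sum_D\overline{\deg}(D,A')\le (|A'|-|A|+|V_0|)\binom{|B'|-1}{k-2}+(k-1)\deg(b,B')$. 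Here the main term is $(k-1)\eps_1\binom{|B|}{k-1}$ plus $O(\eps_2)$ terms, which gives $3k\eps_1\binom{|B_1|}{k-1}$ cleanly.

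For (c), take $i\in[s-1]$. Each $L_j^i$ is $\eps_1$-good by linkability of ${\bf L}_j$, and each of these sets lies in $B'$. The same averaging over $(k-1)$-sets $D$ with $L_j^i\subseteq D\subseteq B'$ gives $\overline{\deg}\le (k-\ell_i)\deg(L_j^i,B')+O(\eps_2)$ terms. Since $\deg(L_j^i,B)\le\eps_1\binom{|B|}{k-\ell_i}$, this yields roughly $k\eps_1\binom{|B_1|}{k-\ell_i}$, within $(2s+1)k\eps_1$. The case $i=0$, with $\ell_0=0$ and $L_j^0=\emptyset$, is handled by summing (b) over $b$ or directly from the first part of (b) summed over $a\in A_1$; the normalization is again by $\binom{|B_1|}{k}$-type counts. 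In that case the slack factor $(2s+1)$ covers the ratio $|A_1|/|B_1|\le 1/(s(k-\ell)-1)$.
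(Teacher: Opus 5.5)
Your proposal is correct and is essentially the paper's argument: (c) is Proposition~\ref{c1} (whose double counting over $(k-1)$-sets $D\supseteq L_j^i$ you re-run) followed by renormalization to $\binom{|B_1|}{k-\ell_i}$. For the second part of (b), you are right that citing Proposition~\ref{c1} with $\ell'=1$ only gives a constant of about $2sk(k-1)/(s(k-\ell)-1)$, which can exceed $3k$; stopping the same averaging at the unnormalized bound $(k-1)\deg(b,B')+O(\eps_2)\binom{|B|}{k-1}$ is the right fix (the paper defers (a)--(b) to~\cite{HZ15}).

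One harmless slip: singletons of $L_0\cup L_1$ are not $\eps_1$-good ``by linkability'', since linkability only concerns the nested $\ell_i$-sets. However, $B_1\subseteq B'$, so $\{b\}$ is $\eps_1$-good for every $b\in B_1$ directly from the definition of $B'$.
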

\begin{proof}
The proofs of (a)-(b) are omitted here, since they are identical to the proofs presented in~\cite[Claim 3.9]{HZ15}. 

For (c), by Proposition~\ref{c1}, every
$\varepsilon_{1}$-good $\ell'$-set $L'\subseteq B_1\subseteq B'$ satisfies $\overline{\deg}(L',A'(B')^{k-1})\le 2sk\varepsilon_{1}|A'|\binom{|B'|-\ell'}{k-1-\ell'}$.
Thus, we have
\[
\overline{\deg}(L',A_1B^{k-1}_1)\le \overline{\deg}(L',A'(B')^{k-1}) \le 2sk\varepsilon_{1}|A'|\binom{|B'|-\ell'}{k-1-\ell'}\le (2s+1)k\eps_1\binom{|B_1|}{k-\ell'}
\]
as $|B'|\le |B_1|+|V(\cQ)|\le (1+\eps_1)|B_1|$. 
For each $i\in [s-1]\cup \{0\}$ and $j\in \{0,1\}$, since $\ell_i$-set $L_j^i\subseteq B_1$ is $\eps_1$-good, we have $\overline{\deg}(L_j^i,A_1B^{k-1}_1)\le (2s+1)k\eps_1\binom{|B_1|}{k-\ell_i}.$
\end{proof}

\subsubsection{Completing the Hamilton $\ell$-cycle} 
Now we move to the second phase of the proof.
Note that there exists an $\ell$-path $\cQ$ with two linkable $\ell$-ends ${\bf L}_0,{\bf L}_1$ by Claim~\ref{cover v0}.
To complete the Hamilton $\ell$-cycle in Theorem~\ref{thm:main result}, it suffices to apply the following result with $X=A_1,Y=B_1,\rho=(2s+1)k\eps_1$,  
and ${\bf L}'_0,{\bf L}'_1$,
where ${\bf L}'_j$ is the ordered $\ell$-set obtained by reversing the ordering of ${\bf L}_j$ for $j\in\{0,1\}$.
\begin{claim}\label{clm:partition}
Let $k/2< \ell < k, 0 < \rho\ll 1$ and $n$ be sufficiently large. 
Suppose that $\cH$ is a $k$-graph with a partition $V(\cH) = X \cup Y$ and the following properties:
\begin{enumerate}[label=(\arabic{enumi})]
    \item $|Y| = (sk - s\ell - 1)|X| + \ell$,
    \item for every vertex $v \in X$, $\overline{\deg}(v, Y) \leq \rho\binom{|Y|}{k-1}$ and for every vertex $v \in Y$, $\overline{\deg}(v, XY^{k-1}) \leq \rho\binom{|Y|}{k-1}$,
    \item for each $i\in [s-1]\cup \{0\}$ and $j\in \{0,1\}$,
$\overline{\deg}(L_j^i,XY^{k-1})\le \rho \binom{|B_1|}{k-\ell_i}$, where $L_j^i$ denotes the $\ell_i$-set of ${\bf L}_j$, with $\ell_0=0$ and $\ell_i=i\ell-(i-1)k$ for $i\in[s-1]$.
\end{enumerate}
Then $\cH$ contains a Hamilton $\ell$-path with two $\ell$-ends ${\bf L}'_0$ and ${\bf L}'_1$, where ${\bf L}'_j$ is the ordered $\ell$-set obtained by reversing the ordering of ${\bf L}_j$ for $j\in\{0,1\}$.
\end{claim}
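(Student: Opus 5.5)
The plan is to fix in advance a rigid ``template'' Hamilton $\ell$-path that uses only $XY^{k-1}$-edges and has the prescribed ends, and then to embed it by a random bijection, as the introduction suggests, using the Lov\'asz Local Lemma for random injections of Lu and Sz\'ekely~\cite{LS2007}. First I would fix the template. Since $|X|+|Y| = (sk-s\ell)|X|+\ell$, a Hamilton $\ell$-path has exactly $s|X|$ edges. I would take an abstract $\ell$-path $\cP^*$ of length $s|X|$ on vertices $u_1,\dots,u_{\ell+s|X|(k-\ell)}$ and colour it as in Lemma~\ref{lem:color}: the positions $k, k+s(k-\ell), k+2s(k-\ell),\dots$ receive colour ``$X$'' and all others colour ``$Y$''. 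The proof of Lemma~\ref{lem:color} shows that every edge of $\cP^*$ contains exactly one $X$-position. Indeed, consecutive $X$-positions are $s(k-\ell)\ge k$ apart, so no edge contains two, and every edge contains at least one. The count of $X$-positions is $|X|$, and the count of $Y$-positions is $\ell+s|X|(k-\ell)-|X|=|Y|$ by (1). Moreover, the first $\ell$ and last $\ell$ positions are $Y$-positions: the first $X$-position is $k>\ell$, and by symmetry the same holds at the far end. It remains to check that at each end the edges meeting the end $\ell$-set meet it in exactly the sets $L^i_j$ of sizes $\ell_i=i\ell-(i-1)k$, for $i\in[s-1]$. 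This is a short computation with $f(i)=(i-1)(k-\ell)+1$.

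Next I would pin the ends and randomise the rest. Map $u_1,\dots,u_\ell$ to ${\bf L}'_1$ and the last $\ell$ positions to ${\bf L}'_0$ in the order dictated by the statement. Then choose a uniformly random bijection $\sigma$ from the remaining $Y$-positions onto $Y\setminus(L_0\cup L_1)$, and an independent uniformly random bijection $\tau$ from the $X$-positions onto $X$. For each edge $e_i$ of $\cP^*$, let $E_i$ be the bad event that its image is not an edge of $\cH$. It suffices to show $\mathbb P[\bigcap \overline{E_i}]>0$, since then the image of $\cP^*$ is the required Hamilton $\ell$-path.

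For an edge not touching the pinned ends, its image is a uniformly random $XY^{k-1}$-set, because the injections are uniform. By (2), the number of non-edges among $XY^{k-1}$-sets is at most $\sum_{v\in X}\overline{\deg}(v,Y)\le \rho|X|\binom{|Y|}{k-1}$, so $\mathbb P[E_i]\le 2\rho$. For the at most $2(s-1)$ edges containing a pinned set $L^i_j$, I would use (3), which gives $\mathbb P[E_i]\le 2\rho$ as well, and the case $\ell_0=0$ covers the remaining edges. I would place the dependency structure in the Lu--Sz\'ekely framework for random injections (applied to $\sigma$ and $\tau$ separately, or to their product). In that framework, $E_i$ and $E_j$ are ``canonically'' dependent only if the domains of $e_i$ and $e_j$ intersect, or if their target structures could collide. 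The negative dependency graph for injections then has degree bounded by a constant times $k^2$: each edge meets at most $2s$ others in the path. The lopsided LLL condition $e\cdot 2\rho\cdot(D+1)<1$ holds since $\rho\ll1$.

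The step I expect to be the main obstacle is making the Lu--Sz\'ekely machinery apply rigorously. Their lemma requires the bad events to be ``canonical events'' that are determined by a partial matching. Here, however, $E_i$ is ``the image $k$-set is a non-edge'', which is a union of many partial-matching events. I would express each $E_i$ as a union of canonical events of the form $\{\sigma(u)=y \text{ for } u\in e_i\}$, one for each non-edge compatible with the colouring. The version of their lemma then bounds the dependency via events sharing domain or range points. The needed sum is of the form $\sum_{j\sim i}\mathbb P[E_j]$, and because events sharing only a range point contribute a factor $O(1/|Y|)$ each, summing over roughly $|Y|$ such events still gives $O(\rho)$. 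I would try first to verify this weighted form of the local lemma condition with explicit constants, treating the pinned edges separately using (3). A fallback is a greedy-plus-switching argument: embed $\cP^*$ randomly, then repair the few bad edges by swapping vertices, using (2) to find good swap partners.
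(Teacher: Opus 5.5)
Your outline matches the paper's: a template with one $X$-position per edge at $k+js(k-\ell)$, ends pinned to ${\bf L}'_1,{\bf L}'_0$, free $Y$-positions embedded by a uniform injection, and the Lov\'asz Local Lemma via Lu--Sz\'ekely. The gap is the local-lemma step, which as you state it is false, not merely unverified. The events $E_i$ are unions of many canonical events, so Lemma~\ref{lem:negative} says nothing about them. Moreover, disjoint template edges still interact through the range of the injection, so the injection structure alone gives no negative dependency graph of degree $O(s)$.

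Here is a test case. Your argument uses only (1), (3) and the $X$-half of (2). Let $\cH$ consist of all $XY^{k-1}$-sets avoiding one fixed vertex $y_0\in Y\setminus(L_0\cup L_1)$.
\begin{itemize}
\item Since $\binom{|Y|-1}{k-2}=\frac{k-1}{|Y|}\binom{|Y|}{k-1}$, the $X$-half of (2) and condition (3) hold for large $n$.
\item Every $\mathbb{P}[E_i]=O(k/|Y|)\le 2\rho$, so your condition $e\cdot 2\rho(D+1)<1$ would produce a Hamilton path.
\item But $\bigcap_i\overline{E_i}=\emptyset$: $y_0=\sigma(u)$ for some free position $u$, and the template edge through $u$ is mapped onto a non-edge.
\end{itemize}
So the $Y$-half of (2), namely $\overline{\deg}(v,XY^{k-1})\le\rho\binom{|Y|}{k-1}$ for $v\in Y$, is indispensable, and your proposal never uses it. Your closing heuristic has the same hole: $O(1/|Y|)$ per event, summed over about $|Y|$ events, is $O(1)$, not $O(\rho)$. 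The factor $\rho$ can only come from that per-vertex bound.

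You name the right repair (split each $E_i$ into canonical events), but carrying it out is the whole content of the proof. The paper takes as bad events the canonical events $A_{F_1,F_2,\phi}$, where $F_1$ is a non-pinned template edge, $F_2$ is a non-edge of type $XY^{k-1}$, and $\phi\colon F_1\to F_2$ is a bijection. By Lemma~\ref{lem:negative}, the conflict graph is then a negative dependency graph. A fixed event conflicts with two kinds of events:
\begin{itemize}
\item those with $F_1'\cap F_1=\emptyset$ but $F_2'\cap F_2\neq\emptyset$, counted via the per-vertex bounds $\overline{\deg}(x,Y)$ and $\overline{\deg}(y,XY^{k-1})$, i.e.\ both halves of (2);
\item those with $F_1'\cap F_1\neq\emptyset$, at most $(s+1)k!\,\overline{e}(XY^{k-1})$ of them, where $\overline{e}(XY^{k-1})=\sum_{v\in X}\overline{\deg}(v,Y)$.
\end{itemize}
This gives $d\le (s+4)k!\rho|X|\binom{|Y|}{k-1}$ against $p_0\approx (|X|\binom{|Y|}{k-1}(k-1)!)^{-1}$, so $ep_0(d+1)<1$.

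The paper also does not randomize $X$. It uses (3) and Markov's inequality to choose $x_1,x_t$ with $\overline{\deg}(L^i_j\cup\{x\},XY^{k-1})\le 2s^2\rho\binom{|Y|}{k-\ell_i-1}$. It then fixes an arbitrary labeling of the other $X$-positions, which is legitimate because the $X$-half of (2) is per vertex. Only $Y_1\to Y\setminus(L_0\cup L_1)$ is randomized, so everything stays inside a single space $I(U,W)$. With your independent $\tau$ you work in a product of two injection spaces, which Lemma~\ref{lem:negative} does not literally cover. In addition, the canonical events of the pinned edges then have probability far above $p_0$, so the symmetric condition $ep(d+1)<1$ is unusable without switching to the asymmetric local lemma. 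The greedy-plus-switching fallback is too unspecified to close the gap.
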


In order to prove Claim~\ref{clm:partition}, we apply the Lov\'asz Local Lemma (see e.g.~\cite{AS2000})
and a result of Lu and Sz\'ekely~\cite[Theorem 1]{LS2007}.

Let $A_1,A_2,\ldots,A_n$ be events in a probability space $\Omega$.
Given an $n$-vertex graph $G$, we call it a \emph{dependency graph} of the events $A_i$'s if $A_i$ is mutually independent of all $A_j$'s with $ij\notin E(G)$.
More generally, a \emph{negative dependency graph} for $A_1,A_2,\ldots,A_n$ is an  $n$-vertex simple graph $G$ satisfying 
$\mathbb{P}[A_i\mid \wedge_{j\in S}\overline{A_j}]\leq \mathbb{P}[A_i]$
for any index $i$ and any subset $S\subseteq \{j\mid ij\notin E(G)\}$, if the conditional probability $\mathbb{P}[\wedge_{j\in S}\overline{A_j}]>0$.

\begin{lemma}\emph{(Lov\'asz Local Lemma)}\label{lem:lll}
\emph{\cite{AS2000}}
 For each $1\leq i\leq n$, suppose that the event $A_i$ satisfies $\mathbb{P}(A_i)\leq p$ and a negative dependency graph $G$ is associated with these events.
 Let $d$ be an upper bound for the degrees in $G$.
 If $ep(d+1)<1$, then $\mathbb{P}[\wedge_{i=1}^n\overline{A_i}]>0$.
\end{lemma}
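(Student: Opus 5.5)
We prove this in the standard way, showing that conditioning on the avoidance of any collection of the other events cannot push the probability of a single event above $\frac{1}{d+1}$. The hypothesis $ep(d+1)<1$ means $p<\frac{1}{e(d+1)}$, and we will also use $(1-\frac{1}{d+1})^{d}\ge e^{-1}$. The key intermediate statement, proved by induction on $|S|$, is the following: for every $S\subseteq[n]$ with $\mathbb{P}[\wedge_{j\in S}\overline{A_j}]>0$ and every $i\notin S$,
\[
\mathbb{P}\Big[A_i \,\Big|\, \wedge_{j\in S}\overline{A_j}\Big]\le \frac{1}{d+1}.
\]
Once this holds, the chain rule gives
\[
\mathbb{P}\Big[\wedge_{i=1}^n\overline{A_i}\Big]=\prod_{i=1}^n\Big(1-\mathbb{P}\Big[A_i\,\Big|\,\wedge_{j<i}\overline{A_j}\Big]\Big)\ge\Big(1-\frac{1}{d+1}\Big)^n>0.
\]
Positivity of each conditioning event is part of the same induction: each factor is at least $1-\frac{1}{d+1}>0$, so all prefixes, and likewise all subsets, have positive probability. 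The case $d=0$ is trivial, since then $1-p>0$ for every factor, so we may assume $d\ge 1$.

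For the inductive step, fix $i\notin S$. Split $S$ into $S_1=\{j\in S: ij\in E(G)\}$ and $S_2=S\setminus S_1$.

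If $S_1=\emptyset$, the defining property of the negative dependency graph directly gives $\mathbb{P}[A_i\mid\wedge_{S}\overline{A_j}]\le\mathbb{P}[A_i]\le p<\frac{1}{d+1}$.

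Otherwise, write
\[
\mathbb{P}\Big[A_i\,\Big|\,\wedge_{S}\overline{A_j}\Big]=\frac{\mathbb{P}\big[A_i\wedge\wedge_{S_1}\overline{A_j}\,\big|\,\wedge_{S_2}\overline{A_j}\big]}{\mathbb{P}\big[\wedge_{S_1}\overline{A_j}\,\big|\,\wedge_{S_2}\overline{A_j}\big]}.
\]
The numerator is at most $\mathbb{P}[A_i\mid\wedge_{S_2}\overline{A_j}]\le p$, by negative dependency, since $S_2$ contains no neighbour of $i$. For the denominator, list $S_1=\{j_1,\dots,j_r\}$ with $r\le d$ and expand by the chain rule as
\[
\prod_{t=1}^r\Big(1-\mathbb{P}\Big[A_{j_t}\,\Big|\,\wedge_{S_2\cup\{j_1,\dots,j_{t-1}\}}\overline{A_j}\Big]\Big).
\]
Each conditioning set here is a proper subset of $S$ not containing $j_t$, so the induction hypothesis bounds each factor below by $1-\frac{1}{d+1}$. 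Hence the denominator is at least $(1-\frac{1}{d+1})^{d}\ge e^{-1}$, and the ratio is at most $ep<\frac{1}{d+1}$, which closes the induction.

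The only point needing care is bookkeeping rather than difficulty. All conditional probabilities must be well defined, which we handle by carrying positivity along in the induction as described above. We must also apply the negative dependency hypothesis only with index sets avoiding the neighbours of $i$, which is exactly why we use it for the numerator with $S_2$ and nowhere else.
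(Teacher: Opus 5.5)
Your proof is correct, and it is the standard inductive argument from the cited source \cite{AS2000}: it is the general local lemma with every $x_i=\frac{1}{d+1}$, specialized inline, and the paper itself gives no proof beyond that citation. The one adaptation a negative dependency graph requires is to invoke the hypothesis only in the numerator, with the non-neighbour set $S_2$, and to handle the denominator purely by induction; you do exactly this, and your separate treatment of $d=0$ and your positivity bookkeeping are also sound.
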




Given two finite sets $U$ and $W$ with $|U|\le|W|$, let $I(U,W)$ denote the probability space of all injections from $U$ to $W$ equipped with
a uniform distribution.
Note that every injection from $U$ to $W$ can
be viewed as a saturated matching of complete bipartite graph with partite sets $U$ and $W$.
We define a matching to be a triple $(S,T,f)$ satisfying $S\subseteq U,T\subseteq W$ and the map $f:S\to T$ is a bijection.
Denote the set of all such matchings by $M(U,W)$, and  $I(U,W)\subseteq M(U,W)$.
Given a matching $(S,T,f)$, we define the event $A_{S,T,f}$ as 
\[A_{S,T,f}=\{\sigma\in I(U,W)\mid \sigma(i)=f(i),\forall i\in S\}.\]
An event $A\in I(U,W)$ is called to be \emph{canonical} if $A=A_{S,T,f}$ for a matching $(S,T,f)$.
Two matchings $(S_1,T_1,f_1)$ and $(S_2,T_2,f_2)$ are said to \emph{conflict} each other if either there is $v\in S_1\cap S_2$ such that $f_1(v)\neq f_2(v)$, or there is $v\in T_1\cap T_2$ such that $f^{-1}_1(v)\neq f^{-1}_2(v)$.

Lu and Székely~\cite[Theorem 1]{LS2007} established a sufficient condition for negative dependency graphs in the space of random injections.
\begin{lemma}\label{lem:negative}\emph{\cite{LS2007}}
Let $A_1, A_2, \dots, A_n$ be canonical events in $I(U, W)$. Let $G$ be the graph on $[n]$ defined as 
\[
E(G) = \{ij \mid A_i\textrm{ and }A_j \textrm{ conflict}\}.
\]
Then $G$ is a negative dependency graph for the events $A_1, \dots, A_n$.    
\end{lemma}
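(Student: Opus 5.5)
We will verify the defining inequality of a negative dependency graph directly. Fix an index $i$, write $A_i=A_{S,T,f}$, and let $J\subseteq\{j: ij\notin E(G)\}$. Every $(S_j,T_j,f_j)$ with $j\in J$ therefore does \emph{not} conflict with $(S,T,f)$. Set $\mathcal B:=\bigwedge_{j\in J}\overline{A_j}$ and assume $\mathbb P[\mathcal B]>0$. The goal $\mathbb P[A_i\mid\mathcal B]\le \mathbb P[A_i]$ is equivalent to $\mathbb P[\mathcal B\mid A_i]\le\mathbb P[\mathcal B]$. (If $\mathbb P[A_i]=0$ there is nothing to prove.)

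The plan is to partition $I(U,W)$ according to the restriction of the injection to $S$. For each injection $g:S\to W$, let $A_g:=\{\sigma\in I(U,W): \sigma|_S=g\}$. All the classes $A_g$ have the same size, and $A_i=A_f$. Hence $\mathbb P[\mathcal B]$ is the uniform average of $\mathbb P[\mathcal B\mid A_g]$ over all $g$. It therefore suffices to prove $|A_f\cap\mathcal B|\le |A_g\cap\mathcal B|$ for every $g$, and we will do this with an explicit injection $A_f\cap\mathcal B\to A_g\cap\mathcal B$.

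\textbf{The map.} Fix $g$. Choose a permutation $\pi$ of $W$ with $\pi(f(x))=g(x)$ for all $x\in S$. Such a $\pi$ can be chosen to map $g(S)\setminus f(S)$ bijectively onto $f(S)\setminus g(S)$ and to fix every element outside $f(S)\cup g(S)$; this is possible since $|f(S)|=|g(S)|$. The map $\sigma\mapsto\sigma':=\pi\circ\sigma$ is then a bijection from $A_f$ to $A_g$.

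\textbf{The key step.} We must show that $\sigma\in\mathcal B$ implies $\sigma'\in\mathcal B$. Equivalently, if $\sigma'\in A_j$ for some $j\in J$, then $\sigma\in A_j$. So take $x\in S_j$ and set $y:=f_j(x)=\sigma'(x)$. We check $\sigma(x)=y$ in three cases.
\begin{enumerate}
\item[(i)] If $x\in S$, non-conflict gives $f_j(x)=f(x)$, so $\sigma(x)=f(x)=y$ directly.
\item[(ii)] If $x\notin S$ and $y\in g(S)$, then $y=g(x')=\sigma'(x')$ for some $x'\in S$. This contradicts injectivity of $\sigma'$, since $x\ne x'$, so this case cannot occur.
\item[(iii)] If $x\notin S$ and $y\in f(S)\setminus g(S)$, then $y\in T\cap T_j$. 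Non-conflict forces $f_j^{-1}(y)=f^{-1}(y)\in S$, contradicting $x\notin S$, so this case cannot occur either.
\end{enumerate}
Thus whenever $x\notin S$, the element $y$ lies outside $f(S)\cup g(S)$ and is fixed by $\pi$, which gives $\sigma(x)=\pi^{-1}(y)=y$. In all cases $\sigma(x)=f_j(x)$, so $\sigma\in A_j$.

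This case analysis is the only place where the non-conflict hypothesis is used, and it is the step I expect to need the most care. Once it holds, the bijection restricts to an injection $A_f\cap\mathcal B\to A_g\cap\mathcal B$, so $\mathbb P[\mathcal B\mid A_f]\le\mathbb P[\mathcal B\mid A_g]$ for every $g$. Averaging over $g$ yields $\mathbb P[\mathcal B\mid A_i]\le\mathbb P[\mathcal B]$, which proves that $G$ is a negative dependency graph.
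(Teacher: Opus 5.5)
Your proof is correct and complete. The paper does not prove this lemma; it imports it as Theorem~1 of Lu and Sz\'ekely~\cite{LS2007}. So there is no in-paper argument to compare against, and your argument stands as a valid self-contained proof of the cited result.

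Your strategy is to partition $I(U,W)$ by the restriction $\sigma|_S$, note that all classes $A_g$ have equal size, and reduce to $|A_f\cap\mathcal B|\le|A_g\cap\mathcal B|$ for every injection $g:S\to W$. The relabeling $\sigma\mapsto\pi\circ\sigma$ then gives this inequality. The permutation $\pi$ is well defined, since $|g(S)\setminus f(S)|=|f(S)\setminus g(S)|$, and $\pi^{-1}$ fixes everything outside $f(S)\cup g(S)$, which is exactly what the final step needs.

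The three cases are sound:
\begin{itemize}
\item Case (i) uses the $S$-side of non-conflict.
\item Case (ii) uses only the injectivity of $\sigma'=\pi\circ\sigma$.
\item Case (iii) uses the $T$-side of non-conflict.
\end{itemize}
This makes transparent that both halves of the definition of ``conflict'' are genuinely needed.

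Degenerate choices of $J$ need no separate treatment. If $J$ contains $i$ itself, or contains some $j$ whose matching coincides with that of $A_i$, then $A_f\cap\mathcal B=\emptyset$ and the inequality is trivial.
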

Now we are ready to prove Claim~\ref{clm:partition}.
Our goal is to show that the desired Hamilton $\ell$-path can be found in $\cH$ by Lov\'asz Local Lemma.
The following proof draws idea heavily from the proof of~\cite[Theorem 2]{LS2007}. 

\begin{proof}[Proof of Claim~\ref{clm:partition}]
Let $\cH_1$ be a Hamilton ($k$-uniform) $\ell$-path of length $st$ and $\cH_2$ with $E(\cH_2):=\overline{E}_{\cH}(XY^{k-1})$. 
Now we aim to prove that there exists an injection $f:V(\cH_1)\to V(\cH)$ which is extended to $E(\cH_1)$ in the natural way, 
such that the images of edges of $\cH_1$ and $\cH_2$ are edge-disjoint.
Let $t=|X|$ and we assume $X=\{x_1,x_2,\ldots,x_{t}\}$.
Suppose that ${\bf L}_0=(y_{st(k-\ell)+\ell},\ldots,y_{st(k-\ell)+1})$ and ${\bf L}_1=(y_1,\ldots,y_\ell)$.
Let ${\bf L}'_0=(y_{st(k-\ell)+1},\ldots,y_{st(k-\ell)+\ell})$ and ${\bf L}'_1=(y_\ell,\ldots,y_1)$.
Define that $\ell_0=0$ and $\ell_i=i\ell-(i-1)k$ for $i\in [s-1]$.
For $i\in [s-1]\cup \{0\}$, let $L^i_0=\{v_{(st-i)(k-\ell)+\ell},v_{(st-i)(k-\ell)+\ell-1},\ldots,v_{st(k-\ell)+1}\}$ and $L^i_1=\{v_{(i-1)(k-\ell)+1},v_{(i-1)(k-\ell)+2},\ldots,v_\ell\}$ denote the $\ell_i$-sets of ${\bf L}_0$ and ${\bf L}_1$, respectively. 
In addition, we let $L_0:=L^1_0$ and $L_1:=L^1_1$.
By (3) in Claim~\ref{clm:partition}, we have $\overline{\deg}(L^i_1,XY^{k-1})\le \rho\binom{|Y|}{k-\ell_{i}}$ with $i\in [s-1]\cup \{0\}$.
Note that $\overline{\deg}(L^i_1,XY^{k-1})=\sum_{v\in X}\overline{\deg}(L^i_1\cup \{v\},XY^{k-1})\le \rho\binom{|Y|}{k-\ell_i}$.
Choose a vertex $x_1\in X$ randomly.
Let $\rho':=2s^2\rho\binom{|Y|}{k-\ell_i-1}$.
By Markov's inequality, we derive 
\begin{eqnarray}\label{eqn:edge pro}
 \mathbb{P}[\overline{\deg}(L^i_1\cup \{x_1\},XY^{k-1})\ge \rho']\le 
 \frac{\rho}{|X|\rho'}\binom{|Y|}{k-\ell_i}\le \frac{|Y|}{2s^2(k-\ell_i)|X|}\le \frac{1}{2s}
 \end{eqnarray}
where the last inequality follows from $|Y| = (sk- s\ell - 1)|X| + \ell$. 
Thus, by the union bound, we have that $\overline{\deg}(L^i_1\cup \{x_1\},XY^{k-1})\le 2s^2\rho\binom{|Y|}{k-\ell_i-1}$ for all $i\in [s-1]\cup \{0\}$ with probability at least $1/2$, so that we can fix such a vertex $x_1\in X$.
Similarly, for $L^i_0$, there exists $x_t\in X\setminus\{x_1\}$ such that 
$\overline{\deg}(L^i_0\cup \{x_t\},XY^{k-1})\le  2s^2\rho\binom{|Y|}{k-\ell_i-1}$  for all $i\in [s-1]\cup \{0\}$.
Let $V(\cH_1)=\{v_1,\dots, v_{\ell+st(k-\ell)}\}$ and 
$Y_1=\{v_j\mid j\in \{\ell+1,\ldots,st(k-\ell)\}\setminus \bigcup_{i=1}^t\{(i-1)(sk-s\ell)+k\}\}.$
Given any labeling of $X\setminus\{x_1,x_t\}=\{x_2, x_3,\dots, x_{t-1}\}$, we consider a random injection $f'$ from $Y_1\subseteq V(\cH_1)$ to $Y\setminus (L_0\cup L_1)$.
To extend $f'$ to an injection $f$ from $V(\cH_1)$ to $V(\cH)$, we let $x_i=f(v_{(i-1)(sk-s\ell)+k})\in X$ for $i\in [t]$ and let $y_j=f(v_j)\in Y$ for
$j\in [\ell] \cup \{st(k-\ell)+1,\ldots,st(k-\ell)+\ell\}$.
Note that $L_0=\{y_{st(k-\ell)+\ell},\ldots, y_{st(k-\ell)+1}\}$ and $L_1=\{y_1,\ldots,y_\ell\}$.
Denote the first $s$ and the last $s$ edges of $\cH_1$ by $e^*_1,\dots,e^*_s$ and $e^*_{st-s+1},\dots,e^*_{st}$, respectively.
For each $i\in[s]\cup\{st-s+1,\ldots,st\}$, let $e_i:=f(e_i^*)$ be its image under $f$ and let $B_{e_i}$ be the event that 
$e_i\in E(\cH)$, and define $B_e=\bigcup_i B_{e_i}$.
By the union bound and~\eqref{eqn:edge pro}, we have $\mathbb{P}[B_e]\geq 1-4s^3\rho$.
From now on we condition on the event $B_e$.

Our probability space is $I(U,W)$ with $U=Y_1$ and $W=Y\setminus (L_0\cup L_1)$.
Consider two edges $F_1\in E(\cH_1)\setminus\{e^*_j\mid j\in [s]\cup\{st-s+1,\ldots,st\}\}$ and $F_2\in E(\cH_2)$ as well as a bijection $\phi:F_1\to F_2$.
 Define (canonical) events $A_{F_1,F_2,\phi}=\{\sigma\in I(U,W)\mid\sigma(i)=\phi(i),\forall i\in V(F_1)\}$ 
 as our bad events.
Note that $\mathbb{P}[A_{F_1,F_2,\phi}]=\frac{1}{|X|\binom{|Y|}{k-1}(k-1)!}$.
 Thus, for the conditional probability $\mathbb{P}[A_{F_1,F_2,\phi}\mid B_e]$, we have 
 \[
 \mathbb{P}[A_{F_1,F_2,\phi}\mid B_e]\leq \frac{\mathbb{P}[A_{F_1,F_2,\phi}]}{\mathbb{P}(B_e)}\leq \frac{1}{1-4s^3\rho}\cdot \frac{1}{|X|\binom{|Y|}{k-1}(k-1)!}<\frac{1+8s^3\rho}{|X|\binom{|Y|}{k-1}(k-1)!} =: p_0,
 \]
 where the last inequality holds as $\rho\ll1$.

Let $G$ be the conflict graph defined on the family of all events $A_{F_1,F_2,\phi}$, that is, two vertices $A_{F_1,F_2,\phi}$ and $A_{F_1',F_2',\phi'}$ are connected by an edge if and only if they conflict with each other.
By definition, an event $A_{F_1,F_2,\phi}$ conflicts another event $A_{F'_1,F'_2,\phi'}$ if and only if 
\begin{itemize}
    \item $F_1\cap F'_1=\emptyset$ and $F_2\cap F'_2\neq \emptyset$, or
    \item $F_1\cap F'_1\neq \emptyset$ and $\phi(F_1\cap F'_1)\neq \phi'(F_1\cap F'_1)$.
\end{itemize}
For $i=1,2$, suppose that $\cH_i$ has $m_i$ edges and every edge in $\cH_i$ intersects at most $d_i$ other edges of $\cH_i$. 
Given $x\in X$ and $y\in Y$, by (2) in Claim~\ref{clm:partition}, we obtain that an event $A_{F_1,F_2,\phi}$ has at most 
\[k!(d_2+1)m_1=k!\left(\overline{\deg}(x,Y)+\overline{\deg}(y,XY^{k-1})+1\right)\cdot|X|\leq 3k!\rho|X|\binom{|Y|}{k-1}\]
conflicts of the first type, and at most 
\[k!(d_1+1)m_2=(s+1)k!\overline{e}(XY^{k-1})=(s+1)k!\sum_{v\in X}\overline{\deg}(v,Y)\leq (s+1)k!\rho|X|\binom{|Y|}{k-1}\] conflicts of the second type.
Therefore, the maximal degree $d$ of $G$ is at most
$(s+4)k!\rho|X|\binom{|Y|}{k-1}$.
So, by Lemma~\ref{lem:negative}, $G$ is a negative dependency graph for the events $A_{F_1,F_2,\phi}$.
Since $\rho\ll 1$, we derive
\[e(d+1)p_0<e\left((s+4)k!\rho|X|\binom{|Y|}{k-1}+1\right)\cdot \frac{1+8s^3\rho}{|X|\binom{|Y|}{k-1}(k-1)!}<1. \]
Thus, by Lemma~\ref{lem:lll}, with positive probability, all bad events $A_{F_1,F_2,\phi}$ condition on $B_e$
do not occur.
That is, there exists an injection $f:V(\cH_1)\to V(\cH)$ which is extended to $E(\cH_1)$ in the natural way, 
such that the images of edges of $\cH_1$ and $\cH_2$ are edge-disjoint, yielding that all edges of $\cH_1$ are mapped to edges of $\cH$.
So we obtain a Hamilton ($k$-uniform) $\ell$-path of length $st$ of $\cH$ with ${\bf L}'_0$ and ${\bf L}'_1$ as ends.
\end{proof}

The rest of this section is devoted to showing that there are $sq$ vertex-disjoint $\ell$-paths $P_1,P_2,\dots,P_{sq}$ with linkable $\ell$-ends in $B'$.

\subsection{Finding $sq$ vertex-disjoint $\ell$-paths under minimum co-degree conditions}\label{subsec:sq}
For any vertex $b \in B'$, by the definition of $B'$, we have
\begin{eqnarray}\label{b1}
\begin{aligned}
   \deg(b, B')&\leq \deg(b, B)+|B'\setminus B|\binom{|B'|-1}{k-2}\\
   &\leq \varepsilon_{1}\binom{|B|}{k-1}+\varepsilon_{2}|B|\binom{|B'|-1}{k-2}<2\varepsilon_{1}\binom{|B|}{k-1}.
\end{aligned}
 \end{eqnarray}

For the case $k/2<\ell<3k/4$, we firstly use the exact minimum co-degree condition~\eqref{eqn:conj value} to find the following $sq$ vertex-disjoint edges in $B'$ and then extend them to $sq$ $\ell$-paths $P_1,P_2,\ldots,P_{sq}$, each of which has length at most seven and contains two linkable $\ell$-ends.

 
\begin{claim}\label{clmu}
Suppose $k\ge 3, k/2 <\ell<3k/4$ such that $(k-\ell)\nmid k$ and $\delta_{k-1}(\cH)\geq \frac{n}{s(k-\ell)}$.
    Let $|A\cap B'|=q>0$. Then there is 
    a matching $\cP^*$ of size $sq$ in $B'$, denoted by $e_1,e_2,\ldots,e_{sq}$, such that for $i\in [sq]$ there exists $u_i\in e_i$ such that $e_i\setminus \{u_i\}$ has no $\eps_1^2/3$-bad subset.
\end{claim}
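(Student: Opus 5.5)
We will find the $sq$ edges greedily inside $B'$, one at a time. The structural input is the exact co-degree condition \eqref{eqn:conj value} applied to $(k-1)$-sets inside $B'$, combined with the fact that $|A\cap B'|=q>0$. When $q>0$, Lemma~\ref{lem1} gives $B\subseteq B'$, so $|B'|\ge |B|+q$. We use this excess of $q$ vertices, together with the minimality of $e(B)$ in the choice of the partition, to force many edges inside $B'$.

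\textbf{Step 1: counting edges in $B'$.} First we double count. Take a subset $B^*\subseteq B'$ with $|B^*|=|B|+q$ containing $B\cup(A\cap B')$. Then $|V\setminus B^*|=|A|-q$. Every $(k-1)$-set $S\subseteq B^*$ has
\[
\deg(S,B^*)\ge \frac{n}{s(k-\ell)}-(|A|-q)\ge q-1,
\]
using $|A|=\lceil \frac{n}{s(k-\ell)}\rceil$. Summing over $S$ gives
\[
e(B^*)\ge \frac{q-1}{k}\binom{|B^*|}{k-1},
\]
which is the order we need. To cover the edge case $q=1$ and to gain an extra $+1$, we will use the rounding slack in $|A|$ versus $n/(s(k-\ell))$. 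We will also use the minimality of $e(B)$: swapping a vertex $a\in A\cap B'$ into $B$ in place of a vertex of $B$ must not decrease $e(B)$. This shows that each vertex of $A\cap B'$ has degree inside $B$ at least about the minimum vertex degree in $B$. We will also use each vertex of $A\cap B'$ directly, since its degree into $B$ is at most $\varepsilon_1\binom{|B|}{k-1}$ but, by the co-degree condition, still at least about $\frac{q}{k-1}\binom{|B|}{k-2}$.

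\textbf{Step 2: the greedy matching.} Next we build the matching greedily. Suppose $e_1,\dots,e_{j}$ with $j<sq$ have been chosen, using $kj\le skq$ vertices. Each chosen vertex kills at most $\Delta_1(\cH[B'])$ edges, where $\Delta_1$ denotes maximum vertex degree. So it is not enough to count edges; we need them spread out. The natural device is to distinguish high- and low-degree vertices of $\cH[B']$ and to take edges of the form $\{u_i\}\cup T_i$. Here $u_i$ is a vertex of positive co-degree contribution, and $T_i$ is a $(k-1)$-set in $B$ containing no $\varepsilon_1^2/3$-bad subset. By Fact~\ref{fact:no bad}, all but $2^{k}\cdot 3\varepsilon_0/\varepsilon_1^2\binom{|B|}{k-1}$ of the $(k-1)$-sets of $B$ are free of bad subsets, and $\varepsilon_0/\varepsilon_1^2=\varepsilon_1^2$ is tiny. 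For a fixed good $(k-1)$-set $T$ avoiding used vertices, the co-degree bound gives at least $q-1-(\text{vertices of }A\text{ outside }A\cap B')$ neighbours in $B'$, which is positive only marginally. We therefore instead pick $T$ of size $k-1$ among the many good, unused sets and its neighbour $u$ among unused vertices of $B'$. Since $\deg(T)\ge n/(s(k-\ell))\ge |A|-1+\dots$, at least $q-1+|A\setminus A'|$-type many neighbours lie in $B'$. Used vertices number at most $skq$, so we must ensure that many $T$ have degree in $B'$ exceeding $skq$. We handle this by averaging: by Step 1 the average degree of good sets into $B'$ is of order $q$, and we need a factor $sk$ more.

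\textbf{Step 3: the obstacle, boosting the factor $sk$ (where $\ell<3k/4$ enters).} Here we expect the real work to be. We will use the hypothesis $\ell<3k/4$, which gives $s(k-\ell)$ with $s=\lceil k/(k-\ell)\rceil$ small relative to $k$. Combined with $|B|=\lfloor(1-\frac1{s(k-\ell)})n\rfloor$ and the exact bound, this shows that $(k-1)$-sets of $B$ meeting few edges into $A$ must compensate inside $B'$. Concretely, a $(k-1)$-set $T\subseteq B$ has
\[
\deg(T,B')\ge \frac{n}{s(k-\ell)}-\deg(T,A'),
\]
so it suffices to bound the number of non-edges from $T$ to $A'$ from below on average. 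For this we plan to use the extremality of $e(B)$ (no vertex of $A$ can be swapped profitably) together with Proposition~\ref{c1}-type counting. This yields a total of at least roughly $skq\binom{|B|}{k-2}$ edges of $B'$ through distinct unused good sets. Finally, we pick a matching from these by a greedy/Turán-style argument, repeatedly removing edges touching previous ones and using $\Delta_1$-control from \eqref{b1}.
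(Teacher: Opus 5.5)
There is a genuine gap. Your Step 1 is the right starting point, and the missing $+1$ needs nothing beyond integrality. Since $\deg(S)\ge\lceil\frac{n}{s(k-\ell)}\rceil=|A|$ and $|V\setminus B'|=|A|-q$, you get $\deg(S,B')\ge q$ for every $(k-1)$-set $S\subseteq B$. This matters for $q=1$, where your bound $q-1$ is vacuous. The swapping and minimality-of-$e(B)$ considerations are not needed.

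The gap is in Steps 2--3. You decide that counting edges is not enough and that you need a factor-$sk$ boost, namely many good $T$ with $\deg(T,B')>skq$. You plan to extract this from $\ell<3k/4$, the minimality of $e(B)$ and Proposition~\ref{c1}-type counting. Step 3 never derives it, and it need not be true. The co-degree hypothesis only forces $\deg(T,B')\ge q$, and nothing in the hypotheses forces more; for instance, with $q=1$ one can have $\cH[B']$ with all co-degrees bounded by a constant smaller than $sk$. Moreover, the total you announce, roughly $skq\binom{|B|}{k-2}$ edges, is smaller than what Step 1 already gives. It is also too small to survive the deletion argument via \eqref{b1} that you invoke at the end, since a single used vertex may lie in up to $2\eps_1\binom{|B|}{k-1}\gg\binom{|B|}{k-2}$ edges.

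The missing observation is that no local boost is needed: the global count and the loss from deleted vertices are both linear in $q$, so their ratio is controlled by $\eps_1$ alone. Count pairs $(T,u)$ with $T\subseteq B$ a $(k-1)$-set free of $\eps_1^2/3$-bad subsets and $u\in B'$ a neighbour of $T$. By Fact~\ref{fact:no bad} and $\deg(T,B')\ge q$, this gives at least $(1-O(\eps_2))\frac{q}{k}\binom{|B|}{k-1}$ admissible edges $\{u\}\cup T$. Now take a matching of admissible edges with fewer than $sq$ edges; it covers fewer than $skq$ vertices. By \eqref{b1}, each vertex of $B'$ lies in fewer than $2\eps_1\binom{|B|}{k-1}$ edges of $\cH[B']$, so at most $2sk\eps_1 q\binom{|B|}{k-1}$ admissible edges meet the matching. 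Since $\eps_1\ll 1/(sk^2)$, some admissible edge is disjoint from it, and the greedy continues up to $sq$ edges. This is exactly the paper's proof. In particular, $\ell<3k/4$ plays no role in this claim; it is used only later, in Lemma~\ref{lem:sq paths}.
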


\begin{proof}
Since $A\cap B'\neq \emptyset$, by Lemma~\ref{lem1}, we have $B\subseteq B'$. 
Given a $(k-1)$-set $K\subseteq B$, we can find a neighbor of $K$ in $B'$ by the degree condition. 
Since $|B'|=|B|+|B'\setminus B|=|B|+|A\cap B'|= \left\lfloor \left(1-\frac{1}{s(k - \ell)}\right) n \right\rfloor+q$, by~\eqref{eqn:conj value}, we have $\deg(K,B')\geq q$. 
Denote by $\cP^{**}$ the family of edges in $B'$ such that for each $e\in \cP^{**}$, there exists $u\in e$ such that $e\setminus \{u\}$ has no $\eps_1^2/3$-bad subset.
By Fact~\ref{fact:no bad}, there are at least 
$(1-3\cdot2^{k-1}\eps_2)\binom{|B|}{k-1}$ $(k-1)$-sets in $B$ containing no $\eps_1^2/3$-bad subset.
So we derive a lower bound for $|\cP^{**}|$, that is, 
\[|\cP^{**}|> (1-3\cdot2^{k-1}\eps_2)\binom{|B|}{k-1}\frac{q}{k},\]
in which we divide by $k$ because each edge of $\cP^{**}$ is counted at most $k$ times.

Next, we prove that $\cP^{**}$ contains a matching of size $sq$.
Suppose instead, a maximum matching has $i$ ($i<sq$) edges. By \eqref{b1}, there are at most $sqk\cdot 2\eps_{1}\binom{|B|}{k-1}$ edges of $B'$ intersecting these $i$ edges in the matching. Thus, the number of edges in $\cP^{**}$ that are disjoint from these $i$ edges is at least 
\[(1-3\cdot2^{k-1}\eps_2)\binom{|B|}{k-1}\frac{q}{k}-2sk\eps_{1}q\binom{|B|}{k-1}\ge \left(1-3\cdot2^ksk^2\eps_{1}\right)\binom{|B|}{k-1}\frac{q}{k}>0\]
as $\eps_2\ll\eps_{1}\ll \frac{1}{sk^k}$. That is, we can extend the current matching to a larger one, a contradiction.        
\end{proof}

We next extend each edge in Claim \ref{clmu} to an $\ell$-path with two linkable $\ell$-ends.
For $k/2< \ell<3k/4$, we have $3\ell-2k<k-\ell<2\ell-k<2k-2\ell$ and $s\le 4$.
Then when we extend an $\ell$-path from an $\ell$-end $(v_1,\dots, v_\ell)$, (recalling the definition of linkable) there are at most three relevant sets: the $\ell$-set $\{v_1,\dots, v_\ell\}$, the $(2\ell-k)$-set $\{v_{k-\ell+1},\dots, v_\ell\}$ and (if $\ell > 2k/3$) the $(3\ell-2k)$-set $\{v_{2k-2\ell+1},\dots, v_\ell\}$. 
As $3\ell-2k < k-\ell$, it is easy to choose the $(3\ell-2k)$-set to be $\eps_1$-good.
As $k-\ell<2\ell-k<2k-2\ell$, the number of ways of extending an $\eps_1$-good $(3\ell-2k)$-set to an $\eps_1$-bad $(2\ell-k)$-set is very small, which implies that we can always choose the $(2\ell-k)$-set to be $\eps_1$-good.
Then the main concern is the $\ell$-set -- however, if the $\ell$-set is $\eps_1$-bad, then there are many extensions of the path. 
Extending the path three times gives us enough number of extensions so that we can choose a good extension, that is, with a linkable $\ell$-end (see Figure~\ref{fig:1}).

\begin{figure}[ht]
\begin{tikzpicture}

	\coordinate (v01) at (1,0);
	\coordinate (v02) at (2,0);
	
	\coordinate (v1) at (3,0);
	\coordinate (v2) at (4,0);
	\coordinate (v3) at (5,0);
	\coordinate (v4) at (6,0);

	\coordinate (v5) at (7,0);

	\coordinate (v6) at (8,0);
	\coordinate (v7) at (9,0);
	\coordinate (v8) at (10,0);
	\coordinate (v9) at (11,0);
	
	\coordinate (v10) at (12,0);
	\coordinate (v11) at (13,0);

	\begin{pgfonlayer}{front}
		
		\foreach \i in {v1, v2, v3, v4, v6, v5, v7, v8, v9, v10, v11}
			\fill  (\i) circle (2pt);

		\node at (v02) {$\cdots$};

	\end{pgfonlayer}
	
	\begin{pgfonlayer}{background}
	\draw[thick, decorate, decoration={brace, amplitude=8pt}]  (7.2, -1.35) -- (2.8, -1.35) node[midway, below=6pt, font=\scriptsize] {$L_0$};
	\draw[thick, decorate, decoration={brace, amplitude=5pt}]  (7.2, -0.9) -- (4.8, -0.9) node[pos=0.5, below=4pt, font=\scriptsize] {$S_0$};
	\draw[thick, decorate, decoration={brace}]  (7.2, -0.5) -- (6.8, -0.5) node[midway, below=1pt, font=\scriptsize] {$T_0$};
	\draw[thick, decorate, decoration={brace, amplitude=4pt}]  (9.2, -0.9) -- (7.6, -0.9) node[midway, below=4pt, font=\scriptsize] {$W_0$};
	\draw[thick, decorate, decoration={brace, amplitude=4pt}]  (11.2, -0.9) -- (9.6, -0.9) node[midway, below=4pt, font=\scriptsize] {$W_1$};
	\end{pgfonlayer}
	
	
	\qedgeSeven[offset=12pt, line width=1.5pt,  fill=blue!20!white, fill opacity=0.3, draw=blue!70!black]{(v11)}{(v10)}{(v9)}{(v8)}{(v7)}{(v6)}{(v5)};
	\qedgeSeven[offset=12pt, line width=1.5pt, fill=red!20!white, fill opacity=0.7, draw=red!70!black]{(v9)}{(v8)}{(v7)}{(v6)}{(v5)}{(v4)}{(v3)};
	\qedgeSeven[offset=12pt, line width=1.5pt, fill=green!20!white, fill opacity=0.7, draw=green!70!black]{(v7)}{(v6)}{(v5)}{(v4)}{(v3)}{(v2)}{(v1)};

\begin{scope}
		\def\greyoffset{12pt}
		\def\lowerY{-0.45} 
		\def\highY{0.45}
		\def\lowerX{7.4}
		\def\leftX{1.3}
	
		\fill[lightgray!40, opacity=0.8, rounded corners=\greyoffset] 
   			 (\leftX, \highY) -- (\lowerX, \highY) -- (\lowerX, 0) -- (\lowerX, \lowerY) -- (\leftX, \lowerY) ;
		\draw[line width=1.5pt, gray!70!black, rounded corners=\greyoffset] 
			(\leftX, \highY) -- (\lowerX, \highY) -- (\lowerX, 0);
		\draw[line width=1.5pt, gray!70!black, rounded corners=\greyoffset] 
			(\lowerX, 0) -- (\lowerX, \lowerY) -- (\leftX, \lowerY);
\end{scope}	
\end{tikzpicture}
\caption{Extensions from an $\ell$-end $L_0$ and the relevant sets for $k=7, \ell=5$}
\label{fig:1}
\end{figure}
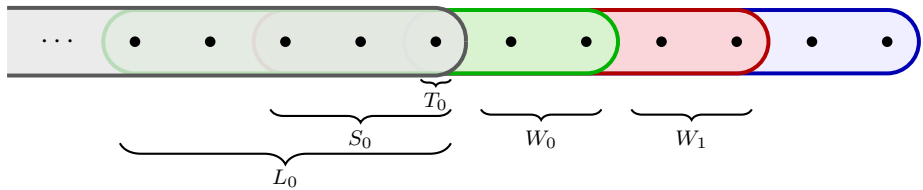

\begin{lemma}
[Disjoint paths in $B'$ for $k/2<\ell<3k/4$]
\label{lem:sq paths}
Suppose $k\ge 3,k/2<\ell<3k/4$ such that $(k-\ell)\nmid k$ and $\delta_{k-1}(\cH)\geq \frac{n}{s(k-\ell)}$.
Let $|A\cap B'|=q>0$. Then there exist $sq$ vertex-disjoint paths in $B'$, each of which has length at most seven and contains two linkable $\ell$-ends.
\end{lemma}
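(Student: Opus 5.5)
We start from the matching $\cP^*=\{e_1,\dots,e_{sq}\}$ of Claim~\ref{clmu}. For each $i$ the vertex $u_i\in e_i$ is such that $e_i\setminus\{u_i\}$ has no $\eps_1^2/3$-bad subset. We turn every $e_i$ into an $\ell$-path of length at most seven with both $\ell$-ends linkable, handling the edges one at a time. Throughout, we avoid the vertex set $U$ of everything used so far. Since $|U|\le 8k\cdot sq\le 8sk\eps_2|B|$, this set is negligible. The path is built outward from $e_i$ on both sides: we order $e_i$ so that $u_i$ lies in the middle portion (not among the first or last $\ell$ vertices when possible, and otherwise we extend). Each end is then extended by up to three edges inside $B'$, giving total length at most $1+3+3=7$.

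The extension from one end goes as follows. Let ${\bf L}=(v_1,\dots,v_\ell)$ be the current end. Linkability concerns at most three sets: the $\ell$-set, the $(2\ell-k)$-suffix, and, when $\ell>2k/3$, the $(3\ell-2k)$-suffix. We make these good in increasing order of size, using $3\ell-2k<k-\ell<2\ell-k<2k-2\ell$.

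First, the $(3\ell-2k)$-suffix of the new end consists of newly added vertices, because $3\ell-2k<k-\ell$. By Fact~\ref{fact:no bad}, all but an $O(\eps_2)$-fraction of choices make it $\eps_1$-good.

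Second, the $(2\ell-k)$-suffix contains the previous $(3\ell-2k)$-suffix together with $k-\ell$ new vertices. For a good set $T$ of size $3\ell-2k$, we count its extensions to bad sets of size $2\ell-k$. Each bad superset $S$ has $\deg(S,B)>\eps_1\binom{|B|}{2k-2\ell}$, while $\deg(T,B)\le\eps_1\binom{|B|}{3k-3\ell}$. Double counting then shows that only an $O(1)$-fraction of the $\binom{|B|}{k-\ell}$ extensions are bad, and since $2k-2\ell>k-\ell$ this fraction is in fact $o(1)$. Hence a random choice of the new vertices keeps this suffix good.

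Third, the full $\ell$-set may be $\eps_1$-bad. If it is $\eps_1$-good we stop. If it is bad, then it has degree more than $\eps_1\binom{|B|}{k-\ell}$ into $B$, so there are many edges in $B'$ extending the path by one more edge. This is where we use the bad end itself: an end that is not linkable is precisely one that admits many continuations inside $B$. We choose the next edge among these continuations, requiring the new $(k-\ell)$ vertices to avoid $U$, keep the two smaller suffixes good as above, and avoid creating a bad $\ell$-set again where possible.

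The main obstacle is the third step: we must show that after at most three such extensions the $\ell$-end becomes linkable. The start is safe because the first end comes from $e_i\setminus\{u_i\}$, which is $\eps_1^2/3$-good. We would argue with two thresholds. If the current $\ell$-set is $\eps_1$-bad, its $\Omega(\eps_1)\binom{|B|}{k-\ell}$ extensions cannot mostly produce bad new ends. Counting via Fact~\ref{c0} shows that all bad $\ell$-sets together number at most $\eps_0/\eps_1\binom{|B|}{\ell}$. Combined with the low degree bound~\eqref{b1} for vertices of $B'$, this should show that only an $O(\eps_0/\eps_1^2)$-proportion of the available continuations end badly. Applying this iteratively, possibly going through the weaker threshold $\eps_1^2/3$ first, should give a linkable end within three steps. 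If this direct counting fails, our fallback is to extend twice greedily to reach an end with many continuations, and then pick the third edge uniformly at random, so that Fact~\ref{fact:no bad} applies to the newly created vertices.

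Finally, we repeat the construction for all $sq$ edges, keeping the paths disjoint. This is possible because only $O(sk\eps_2|B|)$ vertices are ever blocked.
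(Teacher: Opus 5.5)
There is a genuine gap: neither of your counting steps works as stated.

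\textbf{Second step.} If the $(3\ell-2k)$-set $T$ is only $\eps_1$-good, the double count gives
\[
N\cdot\eps_1\binom{|B|}{2k-2\ell}\le\binom{3k-3\ell}{k-\ell}\deg(T,B)\le\eps_1\binom{3k-3\ell}{k-\ell}\binom{|B|}{3k-3\ell},
\]
that is, only the trivial bound $N\le\binom{|B|-2k+2\ell}{k-\ell}$. The inequality $2k-2\ell>k-\ell$ does not make this $o(1)$. Even a small constant fraction would be fatal, because the end you are extending has only about $\eps_1\binom{|B|}{k-\ell}$ continuations. The count works only if $T$ is good at the much stronger threshold $\eps_1^2/3$, which gives $N<\frac{\eps_1}{3}\binom{|B|}{k-\ell}$. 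That is why Claim~\ref{clmu} provides $\eps_1^2/3$-goodness for subsets of $e_i\setminus\{u_i\}$. It is also why the paper puts $u_i$ at position $\lceil k/2\rceil$: there it avoids the outer $(2\ell-k)$-block at \emph{both} ends, which is possible exactly because $2(2\ell-k)<k$, i.e.\ $\ell<3k/4$. Your placement ``not among the first or last $\ell$ vertices'' is impossible since $2\ell>k$. Both initial $\ell$-ends contain $u_i$, so they do not ``come from $e_i\setminus\{u_i\}$''.

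\textbf{Third step.} This is the crux, and it is left unproved. Fact~\ref{c0} is a global count. Even together with~\eqref{b1}, it says nothing about how many $\ell$-sets through a \emph{fixed} $(2\ell-k)$-suffix are bad, so the $O(\eps_0/\eps_1^2)$ proportion is unsupported. The fallback fails for two reasons. After two fixed greedy extensions, the final $\ell$-set still contains $2\ell-k>0$ fixed old vertices. And the third block is drawn from the continuations of a bad set, not uniformly from $\binom{B}{k-\ell}$, so Fact~\ref{fact:no bad} does not apply.

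\textbf{How this is closed.} The paper uses a dichotomy. Either some admissible continuation already yields a linkable end, or \emph{every} one produces an $\eps_1$-bad $\ell$-set, which again has more than $\eps_1\binom{|B|}{k-\ell}$ continuations. Branching over all choices for three rounds gives $\Omega(\eps_1^3)\binom{|B|}{k-\ell}^3$ paths. Their final ends lie entirely in the fresh blocks $W_0\cup W_1\cup W_2$ (as $3(k-\ell)\ge\ell$), and a global count then selects a linkable one.

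Your own scheme can also be repaired more directly. The outer block $S_0$ of the initial end avoids $u_i$, so it is $\eps_1^2/3$-good. The same double count then shows that fewer than $\frac{\eps_1}{3}\binom{|B|}{k-\ell}$ continuations $W$ make $S_0\cup W$ bad, and likewise for $T_0\cup W$. Only $O(\eps_1^2)\binom{|B|}{k-\ell}$ continuations contain an $\eps_1$-bad subset or meet used vertices. So a single extension per side already gives a linkable end: its relevant sets are $S_0\cup W$, $T_0\cup W$ and a subset of $W$. This also avoids the paper's final step. There, the number of available ends and the bound on ordered $\ell$-sets with a bad subset are of comparable order in $\eps_1$, so the constants need care.
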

\begin{proof}
Since $|A\cap B'|=q>0$, by Lemma~\ref{lem1}, we have $B\subseteq B'$ and by Claim~\ref{clmu}, there exist vertex-disjoint edges $e_1,e_2,\ldots,e_{sq}$ in $B'$ such that for $i\in [sq]$ there exists $u_i\in e_i$ such that $e_i\setminus \{u_i\}$ has no $\eps_1^2/3$-bad subset.
We will show that $e_1$ can be extended to an $\ell$-path of length at most seven with two linkable $\ell$-ends while avoiding \textit{any} given vertex set $U_1\subseteq B'$ of size at most $skq\le sk \eps_2 |B|$. Then the claim easily follows from a greedy process for $e_2,\ldots,e_{sq}$ (while the size of the set to avoid gets larger but no larger than $7skq$).

Suppose $e_1=\{v_1,v_2,\ldots,v_k\}$ and  $u_1=v_{\lceil\frac{k}{2}\rceil}$.
Suppose that ${\bf L}_0=(v_{k-\ell+1},v_{k-\ell+2},\ldots,v_k)$ and ${\bf L}_1=(v_1,v_2,\ldots,v_\ell)$ are $\ell$-tuples of $e_1$, and we write $L_0$ and $L_1$ for the underlying $\ell$-sets of ${\bf L}_0, {\bf L}_1$, respectively. 
Let us extend $e_1$ starting from ${\bf L}_0$.
Define $S_0:=\{v_{2k-2\ell+1},v_{2k-2\ell+2},\ldots,v_k\}$ and $T_0:=\{v_{3k-3\ell+1},v_{3k-3\ell+2},\ldots,v_k\}$ (if $\ell < 2k/3$, then $T_0:=\emptyset$).
Since $2\ell-k<k/2$ and $3\ell-2k<k/2$, we have $u_1\notin S_0$ and $u_1\notin T_0$, implying that
$S_0$ and $T_0$ are $\eps_1^2/3$-good.
Assume that $L_0$ is $\eps_1$-bad (otherwise we are done with this side),  we have $\deg(L_0,B)> \eps_1\binom{|B|}{k-\ell}$.
Let $\cN_{L_0}$ be a family of $(k-\ell)$-sets contained in $B\setminus U_1$ such that $W_0\in \cN_{L_0}$ if and only if 
\begin{enumerate}[label=\roman*)]
    \item $L_0\cup W_0\in E(\cH)$,
    \item $W_0$ contains no $\eps_1^2/3$-bad subset, 
    \item $T_0\cup W_0$ is $\eps_1$-good.
\end{enumerate}
For the number of $W_0$ satisfying ii) but violating iii), we claim that the number of $(k-\ell)$-sets $W_0\subseteq B$ such that ii) holds but $T_0\cup W_0$ is $\eps_1$-bad, namely, $\deg(T_0\cup W_0,B)> \eps_1\binom{|B|}{k-(2\ell-k)}$, is at most $\frac{\eps_1}2\binom{|B|}{k-\ell}$.
Indeed, if $\ell < 2k/3$, then $T_0=\emptyset$ and every $W_0$ satisfying ii) also satisfies iii).
Otherwise, $\ell > 2k/3$, that is, $3\ell - 2k >0$ and suppose the claim fails to hold.
As $T_0$ is $\eps_1^2/3$-good, we have $\deg(T_0, B)\leq \frac{\eps_1^2}{3}\binom{|B|}{k-(3\ell-2k)}$ but our assumption implies $\deg(T_0,B)> 
\frac{\eps_1}2\binom{|B|}{k-\ell}\cdot \eps_1\binom{|B|}{k-(2\ell-k)}/\binom{3k-3\ell}{k-\ell}> \frac{\eps_1^2}{3}\binom{|B|}{k-(3\ell-2k)}$, a contradiction.  
By Fact~\ref{fact:no bad}, we derive a lower bound for $|\cN_{L_0}|$, that is,
\[
|\cN_{L_0}|> {\eps_1}\binom{|B|}{k-\ell} - |U_1|\binom{|B|}{k-\ell-1}-3\cdot2^{k-\ell}\eps_2\binom{|B|}{k-\ell}-\frac{\eps_1}{2}\binom{|B|}{k-\ell}>\frac{\eps_1}{4}\binom{|B|}{k-\ell}.
\]
Now we claim that we may assume 
\begin{enumerate}
    \item[iv)] for every $W_0\in \cN_{L_0}$, $S_0\cup W_0$ is $\eps_1$-bad.
\end{enumerate}
Indeed, fix any $W_0=\{v_{k+1},\ldots,v_{2k-\ell}\}\in \cN_{L_0}$.
Thus, $L_0\cup W_0\in E(\mathcal H)$ and $W_0$ contains no $\eps_1^2/3$-bad subset.
If the $\ell$-tuple $(v_{2k-2\ell+1},\dots, v_{2k-\ell})$ is linkable, then we are done with one end of the $\ell$-path by adding $W_0$ to the path $e_1$.
Otherwise, as both $W_0$ and $T_0\cup W_0$ are $\eps_1$-good, we infer that the $\ell$-set $L'_0=\{v_{2k-2\ell+1},\dots, v_{2k-\ell}\}$ is $\eps_1$-bad.

Now fix any $W_0=\{v_{k+1},\ldots,v_{2k-\ell}\}\in \cN_{L_0}$ and let $e'_1=L_0\cup W_0$.
Let $L'_0=S_0\cup W_0=\{v_{2k-2\ell+1},\dots, v_{2k-\ell}\}$, 
$S'_0=T_0\cup W_0=\{v_{3k-3\ell+1},\ldots,v_{2k-\ell}\}$ and $T'_0=\{v_{4k-4\ell+1},\ldots,v_{2k-\ell}\}$ (again $T_0'=\emptyset$ if $\ell < 2k/3$).
Let $\cN_{L'_0}$ be a family of $(k-\ell)$-sets in $ B\setminus U_1$ such that $W_1\in \cN_{L'_0}$ if and only if 
\begin{enumerate}[label=\alph*)]
    \item $L'_0\cup W_1\in E(\cH)$,
    \item $W_1$ contains no $\eps_1^2/3$-bad subset,
    \item $T'_0\cup W_1$ is $\eps_1$-good.
\end{enumerate}
For the number of $W_1$ satisfying b) but violating c), we claim that the number of $(k-\ell)$-sets $W_1\subseteq B$ such that b) holds but $T'_0\cup W_1$ is $\eps_1$-bad, namely, $\deg(T'_0\cup W_1,B)> \eps_1\binom{|B|}{k-(2\ell-k)}$, is at most $\frac{\eps_1}2\binom{|B|}{k-\ell}$.
Indeed, if $\ell < 2k/3$, then $T'_0=\emptyset$ and every $W_1$ satisfying b) also satisfies c).
Otherwise, $\ell > 2k/3$, that is, $3\ell - 2k >0$ and suppose the claim fails to hold.
As $T'_0\subseteq W_0$, $T'_0$ is $\eps_1^2/3$-good, that is, $\deg(T'_0, B)\leq \frac{\eps_1^2}{3}\binom{|B|}{k-(3\ell-2k)}$.  
But our assumption implies $\deg(T'_0,B)> 
\frac{\eps_1}2\binom{|B|}{k-\ell}\cdot \eps_1\binom{|B|}{k-(2\ell-k)}/\binom{3k-3\ell}{k-\ell}> \frac{\eps_1^2}{3}\binom{|B|}{k-(3\ell-2k)}$, a contradiction. 
By Fact~\ref{fact:no bad}, we derive that
\[
|\cN_{L'_0}|> {\eps_1}\binom{|B|}{k-\ell} - |U_1|\binom{|B|}{k-\ell-1}-3\cdot2^{k-\ell}\eps_2\binom{|B|}{k-\ell}-\frac{\eps_1}{2}\binom{|B|}{k-\ell}>\frac{\eps_1}{4}\binom{|B|}{k-\ell}.
\]
Now we claim that we may assume 
\begin{enumerate}
    \item[d)] for every $W_1\in \cN_{L'_0}$, $S'_0\cup W_1$ is $\eps_1$-bad.
\end{enumerate}
In fact, 
fix any $W_1=\{v_{2k-\ell+1},\ldots,v_{3k-2\ell}\}\in \cN_{L'_0}$.
That is, $L'_0\cup W_1\in E(\cH)$ and $W_1$ contains no $\eps_1^2/3$-bad subset.
If the $\ell$-tuple $(v_{3k-3\ell+1},\dots, v_{3k-2\ell})$ is linkable, then we are done with one end of the $\ell$-path by adding $W_0\cup W_1$ to the path $e_1$.
Otherwise, as both $W_1$ and $T'_0\cup W_1$ are $\eps_1$-good, we infer that the $\ell$-set $L''_0=\{v_{3k-3\ell+1},\dots, v_{3k-2\ell}\}$ is $\eps_1$-bad.

Therefore, given any choice of $W_0\cup W_1$, we may extend the path again, and by our assumption that the $\ell$-set $L''_0$ is $\eps_1$-bad, there are at least $\eps_1\binom{|B|}{k-\ell} - |U_1|\binom{|B|}{k-\ell-1} \ge \frac{\eps_1}{2}\binom{|B|}{k-\ell}$ choices of the extensions.
Note that we overall have obtained $(\frac{\eps_1}{4}\binom{|B|}{k-\ell})^3 \ge (\eps_1/4)^3\binom{|B|}{3(k-\ell)}$ distinct $\ell$-paths of length four, starting from the edge $e_1$.
Since $3k-3\ell\ge \ell$, this yields at least $(\eps_1/4)^3\binom{|B|}{\ell}$ distinct ordered $\ell$-sets as the (new) $\ell$-ends of the path. 
By Fact~\ref{fact:no bad}, there are at most 
$2^{\ell}\eps_2\ell!
\binom{|B|}{\ell} < (\eps_1/4)^3\binom{|B|}{\ell}$
ordered $\ell$-sets in $B$ that contain at least one $\eps_1$-bad subset, we can fix a choice of linkable $\ell$-end from the choices above.

Note that we can treat ${\bf L}_1$ similarly and thus obtain an $\ell$-path of length at most seven which contains $e_1$ (as an edge), has two linkable $\ell$-ends in $B'$, and
avoids \textit{any} given vertex set $U_1\subseteq B'$ of size at most $skq$.
Therefore the proof of the claim is completed.
\end{proof}

Now we turn to the second case, that is, for $k/2<\ell < k$, and we assume $\delta_{k-1}(\cH)\geq \frac{n}{s(k-\ell)}+\frac{k^2}2$.
In this case we build the $\ell$-paths in a different way: we shall show that there exist special stars, that is, a set of $\Omega(n^{k-2})$ edges that contain a common vertex $v$, with the property that all bad subsets of the edges contain $v$.
Thus, we can build $\ell$-paths by the Tur\'an number of tight paths, and the key property is that such (short) $\ell$-path contains $v$ right in the \textit{middle} of the path (see Figure~\ref{fig:2}), so that the $\ell$-ends are automatically linkable.

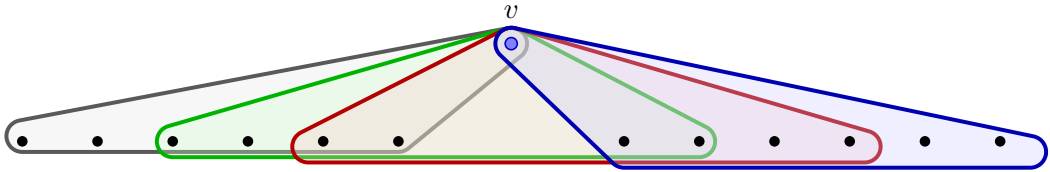
\begin{figure}[ht]
    \centering
\begin{tikzpicture}

	\coordinate (v01) at (1,0);
	\coordinate (v02) at (2,0);
	\coordinate (v010) at (1,0.07);
	\coordinate (v020) at (2,0.07);
	\coordinate (v001) at (3,0.07);
	\coordinate (v002) at (4,0.07);
	\coordinate (v003) at (5,0.07);
	\coordinate (v004) at (6,0.07);
	
	\coordinate (v1) at (3,0);
	\coordinate (v2) at (4,0);
	\coordinate (v3) at (5,0);
	\coordinate (v4) at (6,0);

	\coordinate (v5) at (7.5,1.3);

	\coordinate (v6) at (9,0);
	\coordinate (v7) at (10,0);
	\coordinate (v8) at (11,0);
	\coordinate (v9) at (12,0);
	
	\coordinate (v32) at (4.8,-0.07);
	\coordinate (v42) at (6,-0.07);
	\coordinate (v62) at (9,-0.07);
	\coordinate (v72) at (10,-0.07);
	\coordinate (v82) at (11,-0.07);
	\coordinate (v92) at (12.2,-0.07);
	
	\coordinate (v10) at (13,0);
	\coordinate (v11) at (14,0);
	
	\coordinate (v63) at (9,-0.14);
	\coordinate (v73) at (10,-0.14);
	\coordinate (v83) at (11,-0.14);
	\coordinate (v93) at (12.2,-0.14);
	\coordinate (v103) at (13,-0.14);
	\coordinate (v113) at (14.4,-0.14);

	\begin{pgfonlayer}{front}
		
		\foreach \i in {v01, v02, v1, v2, v3, v4, v6, v7, v8, v9, v10, v11}
			\fill  (\i) circle (2pt);
			
		\foreach \i in {v5}{
			\draw[blue!75!black, very thick]  (\i) circle (2pt);
			\fill[blue!50!white]  (\i) circle (2pt);}
		
		\node[above=0.2cm] at (v5) {$v$};
	\end{pgfonlayer}
	
\qedgeSeven[offset=6pt, line width=1.5pt,  fill=gray!20!white, fill opacity=0.3, draw=gray!70!black]{(v004)}{(v003)}{(v002)}{(v001)}{(v020)}{(v010)}{(v5)};
	\qedgeSeven[offset=6pt, line width=1.5pt, fill=green!20!white, fill opacity=0.3, draw=green!70!black]{(v7)}{(v6)}{(v4)}{(v3)}{(v2)}{(v1)}{(v5)};
	\qedgeSeven[offset=6pt, line width=1.5pt, fill=red!20!white, fill opacity=0.3, draw=red!70!black]{(v92)}{(v82)}{(v72)}{(v62)}{(v42)}{(v32)}{(v5)};
	\qedgeSeven[offset=6pt, line width=1.5pt,  fill=blue!20!white, fill opacity=0.3, draw=blue!70!black]{(v113)}{(v103)}{(v93)}{(v83)}{(v73)}{(v63)}{(v5)};
\end{tikzpicture}
    \caption{$\ell$-path contains $v$ right in the middle of the path for $k=7$, $\ell=5$}
    \label{fig:2}
\end{figure}

\begin{lemma}
[Disjoint paths in $B'$ for $k/2<\ell < k$]
\label{lem:sq}
Suppose $k\ge 3,k/2<\ell < k$ such that $(k-\ell)\nmid k$ and $\delta_{k-1}(\cH)\geq \frac{n}{s(k-\ell)}+\frac{k^2}{2}$.
    Let $|A\cap B'|=q>0$. 
    Then there exist $sq$ vertex-disjoint $\ell$-paths 
    in $B'$, each of which has length at most $s$ and contains two linkable $\ell$-ends.
\end{lemma}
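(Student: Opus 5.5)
We follow the strategy sketched before Figure~\ref{fig:2}. We find $sq$ vertex-disjoint ``special stars'' centred at vertices $v\in B'$, and inside each star we build a short $\ell$-path with $v$ in the middle via Lemma~\ref{turan number}. Since $q>0$, Lemma~\ref{lem1} gives $B\subseteq B'$ and $|B'|=\lfloor(1-\frac{1}{s(k-\ell)})n\rfloor+q$. The extra $\frac{k^2}{2}$ in the co-degree is what lets us guarantee many edges of $B'$ even after deleting the $O(skq)$ vertices already used. Concretely, every $(k-1)$-set $K\subseteq B'$ has $\deg(K,B')\ge q+\frac{k^2}2-O(1)$, so after removing a used set $U$ of size at most $sk\cdot sq$ we still have $\deg(K,B')$ large relative to the removed vertices lying in $A\cap B'$.

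\textbf{Step 1 (locating star centres).} Call $v\in B'$ a \emph{star centre} if there are at least $c\binom{|B|}{k-2}$ $(k-2)$-sets $D\subseteq B\setminus U$ with $D\cup\{v\}\in E(\cH)$ such that $D\cup\{v\}$ has all of its $\eps_1$-bad subsets containing $v$; equivalently, $D$ contains no $\eps_1$-bad subset. Count pairs $(K,w)$ with $K\in\binom{B\setminus U}{k-1}$ having no $\eps_1^2$-bad subset (almost all $K$, by Fact~\ref{fact:no bad}) and $w\in B'\setminus U$ with $K\cup\{w\}\in E$. By the co-degree bound there are at least about $(q+1)\binom{|B|}{k-1}$ such pairs.

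A vertex $w\in B\setminus(A\cap B')$ can only receive at most $2\eps_1\binom{|B|}{k-1}$ of these, by~\eqref{b1}. Hence a pigeonhole argument produces a vertex $w$, not yet used, lying in $\Omega(\binom{|B|}{k-1}/k)$ edges of the form $K\cup\{w\}$ with $K$ free of bad subsets. Each such edge contains $k-1$ choices of $D=K\setminus\{x\}$, so $w$ is a star centre with $c$ of order $1/k$. Here the removal of the $\frac{k^2}2$ slack is exactly accounted for. The point is that we must find such a $w$ outside $U$ each of the $sq$ times. Each used centre removes at most one unit of the ``excess'' degree $q$, and the used $B$-vertices only kill $O(skq\binom{|B|}{k-2})$ pairs. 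So I would run the counting with excess $q-(\text{number of centres already used})+\frac{k^2}{2}-|U\cap(A\cap B')|$ and check it stays positive for all $sq$ rounds. This bookkeeping, making sure $sq$ rather than $q$ centres can be found, is the main obstacle; if the direct count falls short, I would instead use that each $(k-1)$-set has $\ge q+k^2/2$ neighbours in $B'$ while at most $q$ of them lie in $A\cap B'$. Then, by averaging over the $B$-vertices with large link, centres can be taken in $B$ with $\deg(w,B)$ of order $\binom{|B|}{k-2}$.

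\textbf{Step 2 (building the path).} Given a centre $v$, form the $(k-1)$-graph $\cG_v$ on $B\setminus U$ of those $D'$ with $D'\cup\{v\}\in E$ and $D'$ containing no $\eps_1$-bad subset; it has $\Omega(\binom{|B|}{k-1})$ edges. By Lemma~\ref{turan number}, $\cG_v$ contains a tight path $P^{k-1}_{k}$, and adding $v$ to each edge gives a $k$-uniform tight path through $v$, exactly as in Claim~\ref{cover v0}. Take a subpath of order $\ell+s(k-\ell)$ with at least $\ell$ vertices on each side of $v$, and extract an $\ell$-path of length $s$. Its two $\ell$-ends avoid $v$ and lie inside edges of $\cG_v$, so all their relevant subsets are $\eps_1$-good, i.e.\ both ends are linkable. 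Adding its vertices to $U$ and repeating $sq$ times (with $|U|\le s^2kq\le s^2k\eps_2|B|$ throughout) gives the $sq$ vertex-disjoint paths of length at most $s$.
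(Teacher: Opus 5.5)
You have the same architecture as the paper: pick a centre $u\in B'$ with a large ``good link'', find a tight path $P^{k-1}_k$ in it via Lemma~\ref{turan number}, cut out an $\ell$-path of length $s$ with $u$ in the middle, and repeat greedily. However, the quantitative core is wrong.

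\textbf{The link is sparse, so constants matter.} In Step~1 the pigeonhole over $w\in B'\setminus U$ cannot produce a vertex in $\Omega(\binom{|B|}{k-1}/k)$ good edges. In Step~2 the link $\cG_v$ does not have $\Omega(\binom{|B|}{k-1})$ edges. Indeed, \eqref{b1} caps every vertex of $B'$ at $2\eps_1\binom{|B|}{k-1}$ edges inside $B'$. Moreover, $\delta_{k-1}(\cH[B'])\ge q+k^2/2$ is compatible with a design-like $\cH[B']$ in which every vertex lies in only $\Theta(|B|^{k-2})$ edges. What averaging actually gives is a vertex $u$ with $|E(u)|\gtrsim \frac{q+k^2/2}{|B'|}\binom{|B|}{k-1}\approx \frac{q+k^2/2}{k-1}\binom{|B|}{k-2}$. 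This is the same order as the Tur\'an bound $\mathrm{ex}(|B|,P^{k-1}_k)\le \frac{k+1}{2}\binom{|B|}{k-2}$. So Lemma~\ref{turan number} applies only after comparing constants: you need $\frac{q+k^2/2}{k-1}>\frac{k+1}{2}$. This holds for $q\ge 1$, with margin $\frac{3}{2(k-1)}$, and it holds \emph{precisely} because of the additive $k^2/2$. Saying centres have link ``of order $\binom{|B|}{k-2}$'', as in your fallback, is not enough without this check. Separately, your star-centre definition does not typecheck: $D$ is a $(k-2)$-set, yet you require $D\cup\{v\}\in E(\cH)$.

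\textbf{The role of $k^2/2$ and the real difficulty.} Deleting used vertices is cheap. By \eqref{b1}, the at most $2ksq$ used vertices destroy only $O(skq\eps_1)\binom{|B|}{k-1}$ pairs, which is negligible against $(q+k^2/2)\binom{|B|}{k-1}$. So finding a fresh centre in each of the $sq$ rounds is not an obstacle, and no excess needs decrementing per round. The real constraint is that the count must be essentially lossless. Your estimate of ``about $(q+1)\binom{|B|}{k-1}$'' pairs discards the slack. Dividing by $k$, as in ``$\Omega(\binom{|B|}{k-1}/k)$'', by counting edges instead of pairs, is also fatal. Either one makes the constant comparison fail for small $q$.

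\textbf{The fix.} Pigeonhole directly over your pairs $(K,w)$, where $K\subseteq B\setminus U$ is a $(k-1)$-set with no $\eps_1$-bad subset, $w\in B'\setminus U$, and $K\cup\{w\}\in E(\cH)$. Their number equals $\sum_w |E(w)|$ exactly, and it is at least $(1-O(sk^2\eps_1))(q+k^2/2)\binom{|B|}{k-1}$. Hence some $u$ has more than $\frac{k+1}{2}\binom{|B|}{k-2}$ good link edges, and your Step~2 then goes through. The paper removes the factor $k$ differently: if an edge lies in $E(u)\cap E(v)$ with $u\neq v$, ordering it as $(u,\dots,v)$ already gives a length-one path with linkable ends; otherwise the sets $E(v)$ are disjoint.
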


\begin{proof}
Since $A\cap B'\neq \emptyset$, applying Lemma~\ref{lem1}, we have $B\subseteq B'$. 
Since $|B'|=|B|+|B'\setminus B|=|B|+|A\cap B'|=\left\lfloor \left(1-\frac{1}{s(k - \ell)}\right) n \right\rfloor+q$, by~\eqref{aa1}, it follows that $\delta_{k-1}(\mathcal H[B'])\geq q+k^2/2$. 
Let $\cB$ be the collection of all $(k-1)$-sets in $B$
that contain no $\eps_1$-bad subset.
Denote by $\cH^*$ the $k$-graph with vertex set $B'$ 
and edge set $E(\cH^*)=\{e: e=\{v\}\cup K, v\in B',K\in \cB\}$. 
By Fact~\ref{fact:no bad} and $\delta_{k-1}(\mathcal H[B'])\geq q+k^2/2$, we derive a lower bound for $e(\cH^*)$, that is,  
\begin{equation*}\label{eqn: lower bound}
e(\cH^*)> \frac{1}{k} (1-2^{k-1}\eps_2)\binom{|B|}{k-1}(q+k^2/2).
\end{equation*}

Now assume that we have found $i< sq$ $\ell$-paths of length at most $s$ with two linkable $\ell$-ends, and we shall show that we can find yet another such path, vertex disjoint from the existing ones.
Denote the vertex set of the union of these $i$ paths by $V'$. 
Thus, we have $|V'|\le i(\ell+s(k-\ell))< 2ksq$.
By the maximum degree condition~\eqref{b1},
the vertices in $V'$ are  incident to at most $2ksq\cdot 2\eps_1\binom{|B|}{k-1}$
edges.
Note that it suffices to find an $\ell$-path of length $s$ in $\cH^*_1:=\cH^*[B'\setminus V']$ with linkable ends. 
First note that $\cB_1:=\cB[B'\setminus V']$ satisfies that $|\cB_1|\ge (1-2^{k-1}\eps_2)\binom{|B|}{k-1} - |V'|\binom{|B|}{k-2}\ge (1-2^{k}sk\eps_2)\binom{|B|}{k-1}$, as $|V'|\le 2ksq$ and $q\le \eps_2|B|$.
For every $v\in B'$, let $E_{\cH_1^*}(v)$ be the set of edges $e$ of $\cH_1^*$ such that $e\setminus\{v\}\in \cB$.
First we assume that there exist $u,v\in B'$ such that $E_{\cH_1^*}(v)\cap E_{\cH_1^*}(u)\neq \emptyset$.
In this case, let $e\in E_{\cH_1^*}(v)\cap E_{\cH_1^*}(u)$ and observe that all $\eps_1$-bad subsets of $e$ must contain both $u$ and $v$.
Therefore, if we order the vertices of $e$ as $(u,\dots, v)$, then this ordered $k$-set forms an $\ell$-path of length one with both $\ell$-ends linkable.
Take $e$ to be the $(i+1)$st path and we are done.
Thus, we may assume that for every $u, v\in B'$, we have $E_{\cH_1^*}(v)\cap E_{\cH_1^*}(u)= \emptyset$.
This yields that $e(\cH_1^*) + k e(\cH^*\setminus \cH_1^*) \ge (1-2^{k}sk\eps_2)\binom{|B|}{k-1}(q+k^2/2)$,
which in turn gives that an improved bound as
\[
e(\cH_1^*)> (1-2^{k}sk\eps_2)\binom{|B|}{k-1}(q+k^2/2) - 4sk^2q\eps_1\binom{|B|}{k-1} \ge (1-8sk^2\eps_1)\binom{|B|}{k-1}(q+k^2/2).
\]
Therefore, by the pigeonhole principle,  there exists $u\in B'\setminus V'$ such that
\begin{equation}\label{eqn:edge number1}
e_{\cH_1^*}(u) \ge \frac{e(\cH_1^*)}{|B'\setminus V'|} \ge (1-8sk^2\eps_1)\frac{q+k^2/2}{|B'|}\binom{|B|}{k-1}.
\end{equation}
Since $B\subseteq B'$, by Lemma~\ref{11}, we have $|B'|=|B|+|B'\setminus B|\le (1+\eps_2)|B|$ and thus~\eqref{eqn:edge number1} is at least 
 \[
 (1-8sk^2\eps_1)\binom{|B|-1}{k-2}\frac{|B|}{k-1}\cdot \frac{1+k^2/2}{(1+\eps_2)|B|}> \frac{k+1+1/k}{2}(1-8sk^2\eps_1)\binom{|B'|-1}{k-2}.
 \]
Fix such $u\in B'\setminus V'$ satisfying~\eqref{eqn:edge number1}, and we consider the link $(k-1)$-graph of $u$ in $\cH_1^*$. 
By Lemma \ref{turan number}, the upper bound of $\mathrm{ex}(|B'\setminus V'|-1, P^{k-1}_{k})$ is $\frac{k+1}{2} \binom{|B'|-1}{k-2}< \frac{k+1+1/k}{2}(1-8sk^2\eps_1)\binom{|B'|-1}{k-2}$
since $\eps_1\ll \frac{1}{sk^k}$.
Then, we can find a copy of $(k-1)$-uniform tight path $P^{k-1}_k$ in $E_{\cH^*_1}(u)$, which together with $u$, forms a copy of $k$-uniform tight path $Q$ of length $k$ (so of order $2k-1$).
Now we can take a subpath of $Q$ of order $\ell+s(k-\ell)$, so that on the path, there are at least $\ell$ vertices before $u$ and at least $\ell$ vertices after $u$.
Finally, this tight path contains a ($k$-uniform) $\ell$-path of length $s$, denoted by $P_{i+1}$, as a subgraph (and we keep the ordering of the vertices).
By construction, as all edges of $P_{i+1}$ are in $E_{\cH^*_1}(u)$, we know that all $\eps_1$-bad subsets of the edges contain $u$, which yields that both ends of $P_{i+1}$ are linkable (as they do not contain $u$).
Therefore the proof of the claim is completed.
\end{proof}

\section{Remarks}\label{sec:4}
In this paper we verified Conjecture~\ref{conj:co-degree} for $k/2<\ell<3k/4$.
Moreover, by our Theorem~\ref{thm:main result}, to completely resolve Conjecture~\ref{conj:co-degree}, it suffices to build a family of vertex-disjoint $\ell$-paths with linkable $\ell$-ends as stated in Theorem~\ref{thm:main result}.
For this goal we provided two different attempts, yielding exact results for $k/2<\ell< 3k/4$ and results up to an additive constant error $(k^2/2)$ for all $\ell$.

\bibliographystyle{abbrv}
\bibliography{refs}

\begin{thebibliography}{10}

\bibitem{AS2000}
N.~Alon and J.~H. Spencer.
\newblock {\em \text{The Probabilistic Method}, second edition}.
\newblock John Wiley and Sons, New York, 2000.

\bibitem{BJ2017}
J.~O. Bastos, G.~O. Mota, M.~Schacht, J.~Schnitzer, and F.~Schulenburg.
\newblock Loose {H}amiltonian cycles forced by large {${(k-2)}$}-degree --- approximate version.
\newblock {\em SIAM J. Discrete Math.}, 31(4):2328--2347, 2017.

\bibitem{BMSSS2018}
J.~O. Bastos, G.~O. Mota, M.~Schacht, J.~Schnitzer, and F.~Schulenburg.
\newblock Loose {H}amiltonian cycles forced by large {$(k-2)$}-degree --- sharp version.
\newblock {\em Contrib. Discrete Math.}, 13(2):88--100, 2018.

\bibitem{BGHS1978}
J.~C. Bermond, A.~Germa, M.~C. Heydemann, and D.~Sotteau.
\newblock Hypergraphes hamiltoniens.
\newblock In {\em Probl\`emes combinatoires et th\'eorie des graphes ({C}olloq. {I}nternat. {CNRS}, {U}niv. {O}rsay, {O}rsay, 1976)}, volume 260 of {\em Colloq. Internat. CNRS}, pages 39--43. CNRS, Paris, 1978.

\bibitem{BHS2013}
E.~Bu\ss, H.~H\`an, and M.~Schacht.
\newblock Minimum vertex degree conditions for loose {H}amilton cycles in {$3$}-uniform hypergraphs.
\newblock {\em J. Combin. Theory Ser. B}, 103(6):658--678, 2013.

\bibitem{CHWY2025}
Y.~Cheng, J.~Han, B.~Wang, G.~Wang, and D.~Yang.
\newblock Transversal {H}amilton cycle in hypergraph systems.
\newblock {\em SIAM J. Discrete Math.}, 39(1):55--74, 2025.

\bibitem{CM14}
A.~Czygrinow and T.~Molla.
\newblock Tight codegree condition for the existence of loose {H}amilton cycles in 3-graphs.
\newblock {\em SIAM J. Discrete Math.}, 28(1):67--76, 2014.

\bibitem{DIR1952}
G.~A. Dirac.
\newblock Some theorems on abstract graphs.
\newblock {\em Proc. London Math. Soc. (3)}, 2:69--81, 1952.

\bibitem{FJKMV20}
Z.~F\"uredi, T.~Jiang, A.~Kostochka, D.~Mubayi, and J.~Verstra\"ete.
\newblock Tight paths in convex geometric hypergraphs.
\newblock {\em Adv. Comb.}, pages Paper No. 1, 14, 2020.

\bibitem{GHZ19}
W.~Gao, J.~Han, and Y.~Zhao.
\newblock Codegree conditions for tiling complete {$k$}-partite {$k$}-graphs and loose cycles.
\newblock {\em Combin. Probab. Comput.}, 28(6):840--870, 2019.

\bibitem{GM2018}
F.~Garbe and R.~Mycroft.
\newblock {H}amilton cycles in hypergraphs below the {D}irac threshold.
\newblock {\em J. Combin. Theory Ser. B}, 133:153--210, 2018.

\bibitem{GPW12}
R.~Glebov, Y.~Person, and W.~Weps.
\newblock On extremal hypergraphs for {H}amiltonian cycles.
\newblock {\em European J. Combin.}, 33(4):544--555, 2012.

\bibitem{HHZ2022}
H.~H\`an, J.~Han, and Y.~Zhao.
\newblock Minimum degree thresholds for {H}amilton {$(k/2)$}-cycles in {$k$}-uniform hypergraphs.
\newblock {\em J. Combin. Theory Ser. B}, 153:105--148, 2022.

\bibitem{HS2010}
H.~H\`an and M.~Schacht.
\newblock Dirac-type results for loose {H}amilton cycles in uniform hypergraphs.
\newblock {\em J. Combin. Theory Ser. B}, 100(3):332--346, 2010.

\bibitem{HSW2025}
J.~Han, L.~Sun, and G.~Wang.
\newblock Minimum degree conditions for {H}amilton {$\ell$}-cycles in {$k$}-uniform hypergraphs.
\newblock {\em Electron. J. Combin.}, 32(1):Paper No. 1.35, 18, 2025.

\bibitem{HZ15}
J.~Han and Y.~Zhao.
\newblock Minimum codegree threshold for {H}amilton {$\ell$}-cycles in {$k$}-uniform hypergraphs.
\newblock {\em J. Combin. Theory Ser. A}, 132:194--223, 2015.

\bibitem{HZ2015}
J.~Han and Y.~Zhao.
\newblock Minimum vertex degree threshold for loose {H}amilton cycles in 3-uniform hypergraphs.
\newblock {\em Journal of Combinatorial Theory, Series B}, 114:70--96, 2015.

\bibitem{HZ2016}
J.~Han and Y.~Zhao.
\newblock Forbidding {H}amilton cycles in uniform hypergraphs.
\newblock {\em J. Combin. Theory Ser. A}, 143:107--115, 2016.

\bibitem{KK1999}
G.~Y. Katona and H.~A. Kierstead.
\newblock Hamiltonian chains in hypergraphs.
\newblock {\em J. Graph Theory}, 30(3):205--212, 1999.

\bibitem{PDRD2011}
P.~Keevash, D.~Kühn, R.~Mycroft, and D.~Osthus.
\newblock Loose {H}amilton cycles in hypergraphs.
\newblock {\em Discrete Mathematics}, 311(7):544--559, 2011.

\bibitem{KMO10}
D.~K\"uhn, R.~Mycroft, and D.~Osthus.
\newblock Hamilton {$\ell$}-cycles in uniform hypergraphs.
\newblock {\em J. Combin. Theory Ser. A}, 117(7):910--927, 2010.

\bibitem{KO2014}
D.~K\"uhn and D.~Osthus.
\newblock {H}amilton cycles in graphs and hypergraphs: an extremal perspective.
\newblock In {\em Proceedings of the {I}nternational {C}ongress of {M}athematicians---{S}eoul 2014. {V}ol. {IV}}, pages 381--406. Kyung Moon Sa, Seoul, 2014.

\bibitem{DD2006}
D.~Kühn and D.~Osthus.
\newblock Loose {H}amilton cycles in 3-uniform hypergraphs of high minimum degree.
\newblock {\em Journal of Combinatorial Theory, Series B}, 96(6):767--821, 2006.

\bibitem{LS2022}
R.~Lang and N.~Sanhueza-Matamala.
\newblock Minimum degree conditions for tight {H}amilton cycles.
\newblock {\em J. Lond. Math. Soc. (2)}, 105(4):2249--2323, 2022.

\bibitem{LS2007}
L.~Lu and L.~Sz\'ekely.
\newblock Using {L}ov\'asz local lemma in the space of random injections.
\newblock {\em Electron. J. Combin.}, 14(1), 2007.

\bibitem{MR2011}
K.~Markstr\"om and A.~Ruci\'nski.
\newblock Perfect matchings (and {H}amilton cycles) in hypergraphs with large degrees.
\newblock {\em European J. Combin.}, 32(5):677--687, 2011.

\bibitem{RRRSS2019}
C.~Reiher, V.~R\"odl, A.~Ruci\'nski, M.~Schacht, and E.~Szemer\'edi.
\newblock Minimum vertex degree condition for tight {H}amiltonian cycles in {$3$}-uniform hypergraphs.
\newblock {\em Proc. Lond. Math. Soc. (3)}, 119(2):409--439, 2019.

\bibitem{VA10}
V.~R\"odl and A.~Ruci\'nski.
\newblock Dirac-type questions for hypergraphs---a survey (or more problems for {E}ndre to solve).
\newblock In {\em An irregular mind}, volume~21 of {\em Bolyai Soc. Math. Stud.}, pages 561--590. J\'anos Bolyai Math. Soc., Budapest, 2010.

\bibitem{RRS2006}
V.~R\"odl, A.~Ruci\'nski, and E.~Szemer\'edi.
\newblock A {D}irac-type theorem for {$3$}-uniform hypergraphs.
\newblock {\em Combin. Probab. Comput.}, 15(1-2):229--251, 2006.

\bibitem{RRS2008}
V.~R\"odl, A.~Ruci\'nski, and E.~Szemer\'edi.
\newblock An approximate {D}irac-type theorem for {$k$}-uniform hypergraphs.
\newblock {\em Combinatorica}, 28(2):229--260, 2008.

\bibitem{RRS11}
V.~R\"odl, A.~Ruci\'nski, and E.~Szemer\'edi.
\newblock Dirac-type conditions for {H}amiltonian paths and cycles in {$3$}-uniform hypergraphs.
\newblock {\em Adv. Math.}, 227(3):1225--1299, 2011.

\bibitem{Szm1978}
E.~Szemer\'edi.
\newblock Regular partitions of graphs.
\newblock In {\em Probl\`emes combinatoires et th\'eorie des graphes ({C}olloq. {I}nternat. {CNRS}, {U}niv. {O}rsay, {O}rsay, 1976)}, volume 260 of {\em Colloq. Internat. CNRS}, pages 399--401. CNRS, Paris, 1978.

\end{thebibliography}

\end{document}